\documentclass[10pt,oneside]{amsart}
\usepackage[a4paper, width=5in, height=6.9in]{geometry}
\usepackage{amsmath,amssymb,amsthm,booktabs,longtable,enumerate,mathtools}
\usepackage{times}
\usepackage[loadonly]{enumitem}
\usepackage{comment}
\usepackage{graphicx}
\usepackage[all]{xy}
\usepackage{caption}
\usepackage{color}
\usepackage{hyperref}
\theoremstyle{plain}
\makeatletter 

\@addtoreset{table}{section}
\makeatother %
\newtheorem{theorem}{Theorem}[section]
\newtheorem*{theorem*}{Theorem}
\newtheorem{maintheorem}{Main Theorem}
\newtheorem*{remark*}{Remark}
\newtheorem{lemma}[theorem]{Lemma}
\newtheorem{corollary}[theorem]{Corollary}
\newtheorem{proposition}[theorem]{Proposition}
\newtheorem{definition}[theorem]{Definition}

\newtheorem{example}[theorem]{Example}

\newtheorem{problem}[theorem]{Problem}
\newtheorem{remark}[theorem]{Remark}

\newtheorem{question}[theorem]{Question}

\newcommand{\Ad}{\mathop{\mathrm{Ad}}\nolimits}
\newcommand{\rmB}{\mathop{\mathrm{B}}\nolimits}
\newcommand{\rmN}{\mathop{\mathrm{N}}\nolimits}
\newcommand{\GL}{\mathop{\mathrm{GL}}\nolimits}

\newcommand{\id}{\mathop{\mathrm{id}}\nolimits}

\newcommand{\diag}{\mathop{\mathrm{diag}}\nolimits}

\newcommand{\R}{\mathbb{R}}

\newcommand{\Z}{\mathbb{Z}}

\makeatletter
    
    \@addtoreset{equation}{section}
  \makeatother
\makeatletter
\renewcommand{\subsection}{\@startsection{subsection}{2}{0mm}%
  {\baselineskip} 
  {0.5\baselineskip} 
  {\bfseries\itshape\large}} 
\makeatother
\usepackage{enumerate}%
\makeatother
\begin{document}

\title[Proper actions on Homogeneous Spaces of $\rmB(n)$ via coarse geometry]{Proper Actions on Homogeneous Spaces of the Upper Triangular Matrix Group via Coarse Geometry}
\author{Hiroaki Nagaya}
\subjclass[2020]{
Primary 57S30, 
Secondary 
46A08, 
46A17, 
53C23, 
51F30} 
\keywords{proper action; properly discontinuous; Coarse space}

\address[H.~Nagaya]{%
	Graduate School of Advanced Science and Engineering, Hiroshima University, 
    1-3-1 Kagamiyama, Higashi-Hiroshima City, Hiroshima, 739-8526, Japan.
        }
\email{nagayahiroaki@hiroshima-u.ac.jp}

\begin{abstract}
    We study proper actions on homogeneous spaces of the group $\rmB(n)$ of invertible upper triangular matrices from a coarse-geometric viewpoint.
    We obtain sufficient conditions for the properness of the natural $L$-action on $\rmB(n)/H$, where $L,H$ are closed subgroups of $\rmB(n)$.
    We also give an explicit sufficient condition for properness of the $L$-action on a homogeneous space naturally identified with $\rmB(n)/H\cong \mathbb{R}^{n-1}$.
    In addition, we discuss extensions to the case where the ambient group $G$ is a subgroup of $\rmB(n)$ and $L,H$ are closed subgroups of $G$, with particular attention to the unipotent subgroup $\rmN(n)$.
\end{abstract}

\newgeometry{width=5in, height=7.7in}
\maketitle

\tableofcontents

\section{Introduction}

\subsection{Historical Background}

Properly discontinuous actions play a fundamental role in the theory of discontinuous groups. Their behavior is quite different for isometric actions on Riemannian manifolds and for more general actions.
For an isometric action on a Riemannian manifold, every discrete subgroup of the isometry group acts properly discontinuously. 
However, for more general actions, a discrete group need not act properly discontinuously.
Thus, a fundamental problem in the theory of discontinuous groups is to find effective ways to construct and detect such actions. Proper actions provide a useful framework for this purpose, since the restriction of a proper action to a discrete subgroup is properly discontinuous. 
This motivates the following problem:

\begin{problem}\label{problem:properness}
Find an effective criterion for determining whether a given action is proper (see Definition \ref{definition:properaction} for the definition of proper actions).
\end{problem}

A systematic study of this problem was initiated by Kobayashi \cite{Kobayashi89,Kobayashi96} in the context
of discontinuous groups and Clifford--Klein forms of homogeneous spaces.
He considered the following setting:
let $G$ be a locally compact Hausdorff group, $L,H$ closed subgroups of $G$ and $\rho$ the natural $L$-action on the homogeneous space $G/H$, that is, 
\[
    \rho:L\times G/H\to G/H, \quad (\ell,gH)\mapsto \ell gH.
\]
One of the basic ideas in his approach is to regard the properness of $\rho$ as a relationship between the two subgroups $L$ and $H$
inside $G$ (see Proposition \ref{proposition:properbyside}).
When $G$ is a reductive Lie group, effective criteria for properness were established by Kobayashi \cite{Kobayashi89,Kobayashi96} and Benoist \cite{Benoist96}.
Further, in \cite{Kobayashi92Fuji}, Kobayashi introduced the (CI) condition and investigated its relationship with properness.
Here, the $(\mathrm{CI})$ condition means that the isotropy at every point in $G/H$ is compact.
The properness of $\rho$ always implies the $(\mathrm{CI})$ condition.
Thus, the following problem naturally arises.

\begin{question}\label{question:ci}
Under what assumptions does the $(\mathrm{CI})$ condition imply that $\rho$ is proper?
\end{question}

He gave an affirmative answer to the above question when $G:=\GL(2,\R)\ltimes \R^2, H:=\GL(2,\R)$ and $L$ is a connected closed subgroup of $G$.

Motivated by this problem, the case where $G$ is a solvable group has also been studied.
Lipsman \cite{Lipsman1995proper} established the same implication in the case where $G:=\rmN(3)\ltimes \R^3, H:=\rmN(3)$ and $L$ is a connected closed subgroup of $G$.
He also conjectured that Question~\ref{question:ci} has an affirmative answer when $G$ is a 1-connected nilpotent Lie group and $L,H$ are connected closed subgroups of $G$.
Lipsman's conjecture motivated a series of works on proper actions where $G$ is a nilpotent or solvable Lie group.
Nasrin \cite{Nasrin2001twostep} proved the conjecture for two-step nilpotent Lie groups.
Further affirmative results for three-step nilpotent groups were obtained independently by Baklouti--Khlif \cite{BakloutiKhlif2005ExpSolv} and Yoshino \cite{Yoshino2007threestep}.
On the other hand, Yoshino \cite{Yoshino2005counterLips} constructed counterexamples of Lipsman's conjecture, showing that it fails in general already in the four-step nilpotent setting.
Further investigations of proper actions on exponential solvable
homogeneous spaces were carried out by Baklouti--Khlif \cite{BakloutiKhlif2005ExpSolv,BakloutiKhlif2007weak}.

More recently, Maeta \cite{Maeta2023fourIJM} developed cocompact-properness criteria adapted to the case where $G$ is a 1-connected solvable Lie group, and $L,H$ are connected subgroups of $G$ (see Section 3 in \cite{Maeta2023fourIJM}).

In this paper, we investigate Problem~\ref{problem:properness} when $G$ is the group of invertible upper triangular matrices, or more generally, a subgroup of it.
Furthermore, we will approach the problem of determining the properness of $\rho$ from a coarse-geometric perspective.
For this reason, we will also touch on coarse geometry below.

The development of large-scale geometry was strongly influenced by Gromov~\cite{Gromov1987,Gromov1993}.
Building on this, Roe \cite{Roe1993,Roe2003LectureCoarse} formulated coarse geometry.
A coarse space is a generalization of a metric space that retains its large-scale properties.
In this setting, a natural large-scale analogue of disjointness is
\emph{asymptotic disjointness}.
It is studied in several settings  by Dranishnikov, Grzegrzolka--Siegert, Kalantari--Honari, and Protasov \cite{Dranishnikov2001,GrzegrzolkaSiegertproximity2019, KalantariHonari2015, Protasov2003}.

Recently, the properness problem for homogeneous spaces has been related
directly to this notion.
For a locally compact group $G$, one may consider the coarse structure
generated by left and right multiplications, which we call the
LR-coarse structure (see Example \ref{example:coarse}).
For closed subgroups $L,H$ of $G$, the properness of the $L$-action on
$G/H$ can then be characterized by the asymptotic disjointness of $L$ and
$H$ in $G$ equipped with the LR-coarse structure
\cite{MiyajiNagayaOgawaOkuda2026,NagayaOgawaOkuda2025} (see Example \ref{example:properad}).
This gives a large-scale interpretation of the properness relation between
two subgroups.

With this coarse-geometric interpretation in mind, we now state the main results of this paper as below.

\subsection{Main Theorems}

To introduce our main results, we fix the following notation:

\begin{itemize}
    \item To simplify notation, for any $n\in\mathbb{N}$ and any subset $S\subset \text{M}(n,\R)$, we use the following same notation:
    \[
    \|-\|:S\to\R_{\geq 0},\quad  X:=(x_{ij})_{1\leq i,j\leq n}\mapsto \|X\|:=\max\{|x_{ij}|\mid 1\leq i,j\leq n\}.
    \]
    \item For $n\in \mathbb{Z}_{\geq 1}$ and $q\in [n]:=\{1,\ldots,n\}$, let
    $N:=\binom{n}{q}$.
    We denote by
    \[
    \bigwedge^q:\GL(n,\mathbb{R})\to \GL(N,\mathbb{R})
    \]
    the $q$-th exterior power representation.
    With respect to the standard basis of $\bigwedge^q\mathbb{R}^n$ indexed by
    $q$-element subsets of $[n]$, the $(I,J)$-entry of $\bigwedge^q A$ is
    $\det(A_{IJ})$ (see Subsection \ref{subsection:grouphom} for more details).
    Here, $N:=\binom{n}{q}$.
    \item For $n\in \Z_{\geq 1}$ and $1\leq i\leq j\leq n$,  
    \[
    \Phi_{ij} :\rmB(n) \to \rmB(j-i+1),\quad (x_{st})_{1\leq s, t \leq n}\mapsto (x_{st})_{i\leq s, t \leq j}.
    \]
    \item For $n\in \Z_{\geq 1}$, 
    we define the following:
    \begin{align*}
        \Lambda_0(n)&:=\{(i,j,q,\varepsilon)\in [n]^3\times \{\pm 1\}\mid q=1,\varepsilon=1\text{ and } \left(i=j \text{ or }(i,j)=(1,n)\right)\}, \\
        \Lambda_1(n)&:=\{(i,j,q,\varepsilon)\in [n]^3\times \{\pm 1\}\mid i\leq j,1\leq q \leq j-i+1\}.
    \end{align*}
    Further, for a nonempty subset $\Lambda\subset \Lambda_1(n)$, put \[
        \mu_\Lambda : \rmB(n)\to \R^{\Lambda}, \quad X:=(x_{ij})_{i,j\in [n]}\mapsto (\log \|\bigwedge^q\Phi_{ij}(X^\varepsilon)\|)_{(i,j,q,\varepsilon)\in \Lambda}.
        \]
    In particular, let us denote $\mu_{\Lambda_0}$ by $\mu_0$, and $\mu_{\{\lambda\}}$ by $\mu_\lambda$ for each $\lambda\in \Lambda_1(n)$.
\end{itemize}

We now state our two main theorems as below:

\begin{maintheorem}\label{maintheorem:mu0}
    Fix $n\in \Z_{\geq 1}$ and a subset $\Lambda\subset \Lambda_1(n)$ with $\Lambda_0(n)\subset \Lambda$.
    Let $L,H$ be closed subgroups of $\rmB(n)$.
    Assume that the intersection $\mu_{\Lambda}(L)\cap \bar{B}_R[\mu_\Lambda(H)]$ is bounded in $\R^{\Lambda}$ for each $R\geq 0$.
    Then the $L$-action on $\rmB(n)/H$ is proper. 
    Here, $\bar{B}_R[\mu_\Lambda(H)]$ is the closed $R$-neighborhood of $\mu_\Lambda(H)$.
\end{maintheorem}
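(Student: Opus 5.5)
The plan is to use the coarse-geometric characterization (Example~\ref{example:properad}): the $L$-action on $\rmB(n)/H$ is proper if and only if $L$ and $H$ are asymptotically disjoint in $\rmB(n)$ with the LR-coarse structure. Concretely, by Proposition~\ref{proposition:properbyside} it suffices to show that for every compact $S\subset \rmB(n)$ the set $L\cap SHS$ is relatively compact. I will reduce this to a statement about $\mu_\Lambda$, in two steps: (a) $\mu_\Lambda$ is coarsely Lipschitz for the LR-structure, i.e.\ for each compact $S$ there is $R=R(S)\geq 0$ with $\|\mu_\Lambda(s_1 g s_2)-\mu_\Lambda(g)\|\leq R$ for all $g\in\rmB(n)$ and $s_1,s_2\in S$; and (b) $\mu_\Lambda$ is coarsely proper, i.e.\ preimages of bounded sets are relatively compact. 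Granting both, if $\ell=s_1 h s_2\in L\cap SHS$, then $\mu_\Lambda(\ell)\in\mu_\Lambda(L)\cap\bar B_R[\mu_\Lambda(H)]$. This set is bounded by hypothesis, so $\ell$ lies in the preimage of a bounded set, which is relatively compact by (b).

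For (a), I would use that $\Phi_{ij}$ is a group homomorphism on $\rmB(n)$, because diagonal blocks of upper triangular matrices multiply blockwise. The maps $\bigwedge^q$ and $X\mapsto X^{\varepsilon}$ are compatible with products up to reversing the order when $\varepsilon=-1$. So each coordinate has the form $\log\|\pi(s_1')\pi(g')\pi(s_2')\|$, where $\pi=\bigwedge^q\circ\Phi_{ij}$ and the $s'$ range over a compact set. Submultiplicativity of the max-norm up to a dimension constant, $\|AB\|\leq N\|A\|\|B\|$, gives an upper bound. Applying the same bound to $g'=s_1'^{-1}(s_1'g's_2')s_2'^{-1}$ gives the matching lower bound. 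Both bounds are uniform because $\pi$ and inversion are continuous on the compact set.

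Step (b) is where $\Lambda_0(n)\subset\Lambda$ enters, and I expect it to be the main obstacle. Suppose $\mu_\Lambda(g)$ is bounded. Then the coordinates indexed by $\Lambda_0(n)$ are bounded, which means $|g_{ii}|$ is bounded above and below away from $0$ (from $i=j$, $q=1$, $\varepsilon=1$), and $\|g\|=\|\Phi_{1n}(g)\|$ is bounded above. To get a compact set inside $\rmB(n)$, I also need $\|g^{-1}\|$ bounded. Since $g$ is upper triangular, $\det g=\prod g_{ii}$ is bounded away from $0$, and by Cramer's rule each entry of $g^{-1}$ is a cofactor (a polynomial in the entries of $g$, hence bounded) divided by $\det g$. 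Therefore $g$ ranges over a closed bounded subset of $\GL(n,\R)$ on which $\|g^{-1}\|$ is bounded, and this set is compact.

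Some care is needed with $\log\|\cdot\|$ when entries vanish. For $\lambda\in\Lambda_1(n)$ the matrix $\bigwedge^q\Phi_{ij}(X^\varepsilon)$ is invertible, so its norm is positive and $\mu_\Lambda$ is well defined and continuous. The remaining work is to assemble the proof through the characterization in Example~\ref{example:properad}.
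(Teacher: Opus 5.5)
Your proposal is correct and follows the paper's route. The paper also shows each $\mu_\lambda$ is controlled using $\|XY\|\leq N\|X\|\|Y\|$ (Lemma \ref{lemma:submultiplicative}), and shows $\mu_\Lambda$ is coarsely proper because the $\Lambda_0(n)$-coordinates bound $|x_{ii}|^{\pm1}$ and $\|X\|$. It then concludes through Corollary \ref{corollary:coarsemapproper}, whose content you unwind directly into condition (4) of Proposition \ref{proposition:properbyside}.
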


Furthermore, fix $n\geq 2$, and consider the following case:
\[
    H:=\{X\in \rmB(n)\mid x_{in}=0 \text{ for all } 1\leq i<n \}.
\]
Then the homogeneous space $\rmB(n)/H$ can be regarded as $\R^{n-1}$.
Let $\rho$ be the $\rmB(n)$-action on $\R^{n-1}$ as below:
\begin{align*}
    \rmB(n)\times \R^{n-1}&\to \R^{n-1}\\
    \left(
    \begin{pmatrix}
        A &v \\
        0 & \lambda
    \end{pmatrix},w
    \right)
    &\mapsto  \frac{Aw+v}{\lambda}.
\end{align*} 
Here, $A\in \rmB(n-1),v\in \R^{n-1}$ and $\lambda\in \R^\times$.
Define the following:
\[
    \Theta(n):=\{(i,q,\varepsilon)\in [n]\times [n]\times \{\pm 1\}\mid q\leq n-i\}.
\]
For a nonempty subset $\Theta\subset \Theta(n)$, put
\[
    \varphi_{\Theta}:\rmB(n)\to \R^{\Theta}, \quad X\mapsto 
    \left(
    \log\left(
    \frac{\|\bigwedge^q \Phi_{in}(X^\varepsilon)\|}{\max\{\|\bigwedge^q\Phi_{in-1}(X^\varepsilon)\|,\|\bigwedge^{q-1}\Phi_{in-1}(X^\varepsilon)\||x_{nn}^\varepsilon|\}}
    \right)
    \right)_{(i,q,\varepsilon)\in \Theta}.
\]
Then the following main theorem holds:

\begin{maintheorem}     
    Let $L$ be a closed subgroup of $\rmB(n)$ and $\Theta$ a nonempty subset of $\Theta(n)$.
    Assume that the restriction $\varphi_{\Theta}|_L$ on $L$ is proper.
    Then the $L$-action $\rho|_L$ on $\R^{n-1}\cong \rmB(n)/H$ is proper.
\end{maintheorem}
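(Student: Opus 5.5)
The plan is to verify properness directly: for every compact $K\subset\R^{n-1}$ the set $L_K:=\{\ell\in L\mid \rho(\ell)K\cap K\neq\emptyset\}$ should be relatively compact in $L$. Since $\varphi_\Theta|_L$ is assumed proper and $L$ is closed in $\rmB(n)$, it suffices to show that $\varphi_\Theta(L_K)$ is bounded in $\R^\Theta$. So the whole argument reduces to the following estimate: if $X\in\rmB(n)$ maps some point $w\in K$ into $K$, then every coordinate of $\varphi_\Theta(X)$ is bounded by a constant depending only on $K$. Once this holds for each $(i,q,\varepsilon)\in\Theta(n)$, it holds for any nonempty $\Theta$.

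To obtain the estimate I would use the identification $\rmB(n)/H\cong\R^{n-1}$ via $w\mapsto g_w H$, where
\[
g_w:=\begin{pmatrix} I_{n-1} & w\\ 0 & 1\end{pmatrix}.
\]
Then $\rho(X)w=w'$ means $X g_w = g_{w'} h$ for some $h\in H$. In other words, $X=g_{w'}hg_w^{-1}$ with $w,w'$ in the compact set $K$. The matrix $h$ is block diagonal, $h=\diag(A',\lambda)$ with $A'\in\rmB(n-1)$. Replacing $X$ by $X^{-1}$ interchanges $w$ and $w'$ and preserves the same shape, so it is enough to treat $\varepsilon=1$. The next step is to compute $\Phi_{in}(X)$: it equals $\Phi_{in}(g_{w'})\,\Phi_{in}(h)\,\Phi_{in}(g_w^{-1})$, because $\Phi_{in}$ is a homomorphism on $\rmB(n)$ (it takes a lower-right corner block). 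I then apply $\bigwedge^q$, which is multiplicative. The two outer factors range over a compact set, so
\[
\|\bigwedge^q\Phi_{in}(X)\|\asymp\|\bigwedge^q\Phi_{in}(h)\|
\]
up to constants depending on $K$. For the block diagonal $\Phi_{in}(h)=\diag(\Phi_{i,n-1}(A'),\lambda)$, the $q$-th exterior power splits: $\bigwedge^q$ of a block diagonal matrix equals $\bigwedge^q\Phi_{i,n-1}(A')\oplus\bigl(\bigwedge^{q-1}\Phi_{i,n-1}(A')\otimes\lambda\bigr)$. Hence
\[
\|\bigwedge^q\Phi_{in}(h)\|\asymp\max\{\|\bigwedge^q\Phi_{i,n-1}(h)\|,\ \|\bigwedge^{q-1}\Phi_{i,n-1}(h)\|\,|\lambda|\}.
\]

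The remaining point, and the one I expect to be the main obstacle, is to compare the denominator evaluated at $X$ with the same expression evaluated at $h$. We have $\Phi_{i,n-1}(X)=\Phi_{i,n-1}(h)=\Phi_{i,n-1}(A')$ exactly, since $g_w$ and $g_{w'}$ differ from the identity only in the last column. Also $x_{nn}=\lambda$. So the denominator at $X$ equals the max above exactly. The numerator at $X$ is comparable to the numerator at $h$ by the compactness argument, and the numerator at $h$ equals that max up to constants. The logarithm is therefore bounded by constants depending only on $K$, $n$ and $q$. The care needed lies in checking the block-diagonal exterior power identity with the max-norm, in particular the range $q\le n-i$, which makes the $\bigwedge^q\Phi_{i,n-1}$ term meaningful, and the convention $\bigwedge^0=1$. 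It also lies in checking that the comparability constants from multiplying by matrices in a compact subset of $\GL$ are uniform, which follows from $\|ABC\|\le N^2\|A\|\|B\|\|C\|$ applied both ways.

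This yields boundedness of $\varphi_\Theta(L_K)$, hence relative compactness of $L_K$, hence properness of $\rho|_L$.
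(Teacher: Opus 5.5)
Your proof is correct and is essentially the paper's argument without the coarse-geometric language. The paper shows that $\varphi_\Theta$ is controlled for the LR-coarse structure (as a difference and max of the controlled maps $\mu_\lambda$) and that $\varphi_\Theta(H)=\{0\}$ by the same block-diagonal minor computation you use, then concludes via Theorem~\ref{theorem:adfunc} and the relative compactness of $L\cap CHC^{-1}$; you instead bound $\varphi_\Theta$ directly on $CHC^{-1}$ for the specific compact set $C=\{g_w\mid w\in K\}$, which is slightly more economical since there the denominator is exactly invariant and only the numerator needs a compactness estimate.
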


Further, we discuss the case where $G$ is a closed subgroup of $\rmB(n)$ and $L,H$ are closed subgroups of $G$, with particular attention to the case $G=\rmN(n)$ in Section~\ref{section:subgroups}. 
Here, $\rmN(n)$ denotes the group of upper triangular matrices whose diagonal entries are all equal to $1$.

\section{Preliminaries on proper actions}

In this section, we recall some definitions and properties concerning proper actions and group homomorphisms that will be used in the subsequent sections.

\subsection{Proper actions}

In this section, we recall the definitions of proper maps and proper actions, together with some characterizations for properness of actions.

\begin{definition}
    Let $S$ and $T$ be Hausdorff spaces, $f$ be a continuous map from $S$ to $T$.
    The map $f$ is called \emph{topologically proper} if $f^{-1}(C)$ is compact in $S$ for any compact subset $C\subset T$. 
\end{definition}

\begin{definition}\label{definition:properaction}
    Let $L$ be a locally compact Hausdorff group, $S$ a locally compact Hausdorff space, $\rho :L\times S\rightarrow S$ a continuous action. 
    The action $\rho$ is said to be \emph{proper} if the map
    \[
    \Psi : L \times S \to S \times S, \quad (g, x) \mapsto (x, gx)
    \]
    is topologically proper.
\end{definition}

Under the settings below, the properness of group actions can be restated as follows:

\begin{proposition}[cf.~\cite{Kobayashi96}]\label{proposition:properbyside}
    Let $G$ be a locally compact Hausdorff group, $L$ and $H$ closed subgroups of $G$.
    The following six conditions are equivalent:
    \begin{enumerate}
        \item The natural $L$-action on the homogeneous space $G/H$ is proper.
        \item The natural $H$-action on the homogeneous space $G/L$ is proper.
        \item The diagonal $G$-action on the space $G/H\times G/L$ is proper.
        \item\label{proposition:properbyside:H} The intersection $L\cap CHC^{-1}$ is relatively compact in $G$ for any compact subset $C$ of $G$.
        \item\label{proposition:properbyside:L} The intersection $CLC^{-1}\cap H$ is relatively compact in $G$ for any compact subset $C$ of $G$.
        \item\label{proposition:properbyside:HL} The intersection $CHC^{-1}\cap CLC^{-1}$ is relatively compact in $G$ for any compact subset $C$ of $G$.
    \end{enumerate}    
\end{proposition}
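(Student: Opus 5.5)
The plan is to prove the equivalences through a cycle of implications, with condition (4) as the hub. Throughout, $G$ is locally compact Hausdorff, so every compact set lies in the interior of a compact set, and products and inverses of compact sets are compact. This lets us pass freely between ``for every compact $C$'' statements and work with symmetric compact sets $C=C^{-1}$ containing $e$ whenever convenient.

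\emph{(1)$\Leftrightarrow$(4).} First we show (1)$\Rightarrow$(4). Let $C\subset G$ be compact and put $C_H:=\pi(C)\subset G/H$, which is compact, where $\pi:G\to G/H$ is the projection. If $\ell\in L\cap CHC^{-1}$, write $\ell=c_1hc_2^{-1}$. Then $\ell c_2H=c_1H$, so $(c_2H,\ell c_2H)\in C_H\times C_H$. Hence $L\cap CHC^{-1}$ lies in the projection to $L$ of $\Psi^{-1}(C_H\times C_H)$, which is compact by properness. This projection is closed in $L$, hence in $G$ since $L$ is closed, so $L\cap CHC^{-1}$ is relatively compact. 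For (4)$\Rightarrow$(1), take compact $K\subset G/H\times G/H$; we may assume $K\subset D\times D$ with $D$ compact. Since $G/H$ is locally compact and $\pi$ is open, there is a compact $C\subset G$ with $\pi(C)\supset D$. If $(\ell,x)\in\Psi^{-1}(D\times D)$ with $x=cH$ and $\ell x=c'H$, then $\ell\in c'Hc^{-1}\subset CHC^{-1}$. Thus the $L$-components lie in the compact set $\overline{L\cap CHC^{-1}}$, the $x$-components lie in $D$, and $\Psi^{-1}(K)$ is a closed subset of a compact set, hence compact. The main technical point in this step is the existence of a compact lift $C$ of a compact subset of $G/H$. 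This follows from local compactness: cover $D$ by finitely many images $\pi(\text{int}\,V_i)$ with $V_i$ compact neighborhoods, and take $C=\bigcup V_i$.

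\emph{Symmetry.} The equivalence (2)$\Leftrightarrow$(5) is the same argument with the roles of $L$ and $H$ swapped. Next we show (4)$\Leftrightarrow$(5). Suppose $L\cap CHC^{-1}\subset K$ with $K$ compact. If $h\in CLC^{-1}\cap H$, write $h=c_1\ell c_2^{-1}$. Then $\ell=c_1^{-1}hc_2\in L\cap C^{-1}HC$, which lies in a compact set $K'$ by (4) applied to $C^{-1}$. Hence $h\in CK'C^{-1}$, which is compact. The converse is symmetric.

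\emph{(6).} To get (4)$\Rightarrow$(6), take $x=c_1hc_2^{-1}=c_3\ell c_4^{-1}$. Then $\ell=c_3^{-1}c_1hc_2^{-1}c_4\in L\cap (C^{-1}C)H(C^{-1}C)^{-1}$, which is relatively compact by (4). Hence $x\in C\,\overline{(\cdots)}\,C^{-1}$, which is compact. The implication (6)$\Rightarrow$(4) is trivial, taking $C\ni e$ enlarged, since $L\cap CHC^{-1}\subset CHC^{-1}\cap CLC^{-1}$ once $e\in C$.

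\emph{(3).} The diagonal $G$-action on $G/H\times G/L$ has the stabilizer of the point $(eH,eL)$ equal to $H\cap L$. Properness of this action can be reduced to (6) exactly as in the (1)$\Leftrightarrow$(4) step. Compact subsets of $(G/H\times G/L)^2$ lift to compact sets $C$. A group element $g$ moving $(c_1H,c_2L)$ to $(c_3H,c_4L)$ satisfies $g\in CHC^{-1}\cap CLC^{-1}$. Conversely, elements of $CHC^{-1}\cap CLC^{-1}$ arise from $\Psi^{-1}$ of a compact set. This gives (3)$\Leftrightarrow$(6), which closes the cycle.

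I expect no serious obstacle. The only care needed is the compact-lifting lemma for $G\to G/H$ and keeping track of closedness, so that ``relatively compact'' transfers correctly between $L$ and $G$.
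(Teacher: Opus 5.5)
Your argument is correct. The paper gives no proof of its own and only cites Kobayashi; your route is the standard direct one: lift compact subsets of $G/H$ (resp.\ $G/L$) to compact subsets of $G$, then unwind $\Psi^{-1}$ of compact sets into relative compactness of $L\cap CHC^{-1}$ and its variants, with (4) as the hub.

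One small wording fix: in (1)$\Rightarrow$(4), what gives relative compactness is that the projection of $\Psi^{-1}(C_H\times C_H)$ to $L$ is \emph{compact}, being a continuous image of a compact set. Its being closed in $L$ is not what you need.
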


Note that the three conditions \eqref{proposition:properbyside:H}, \eqref{proposition:properbyside:L} and \eqref{proposition:properbyside:HL} in the above proposition make sense without assuming that $L$ and $H$ are closed subgroups.

\subsection{Group morphisms}\label{subsection:grouphom}

In this section, we record two group homomorphisms as below.

For each $n,q \in \Z_{\geq 1}$ with $q\leq n$, we fix the following notations throughout this paper:
\begin{align*}
[n]&:=\{1,\cdots, n\}, \\
\binom{[n]}{q}&:=\{I:[q] \to [n]\mid I  \text{ is strictly monotonically increasing}\}.
\end{align*}
We equip $\binom{[n]}{q}$ with the lexicographic order defined by:
\begin{align*}
I\leq_{\text{lex}} J &:\Leftrightarrow I=J \text{ or }\exists a\in [q] \text{ such that } I(a)< J(a)  \text{ and } I(b)=J(b) \, (\text{for all } 1\leq b<a),\\
\end{align*}
and for $X\in \mathrm{M}(n,\R)$, 
\begin{align*}
X_{IJ}&:=(x_{I(i)J(j)})_{i,j\in [q]} \in \text{M}(q,\R).
\end{align*}
We write the bijective strictly monotonically increasing map from $\left[\binom{n}{q}\right]$ to $\binom{[n]}{q}$ for $\mathbb{I}_{n,q}$.

We now recall the following proposition: 

\begin{proposition}\label{proposition:outergrouphom}[cf.~\cite{HornJohnson2013}]
    For each $n,q\in \Z_{\geq 1}$ with $q\leq n$,
    the following map $\bigwedge^q$ is well-defined and a continuous group homomorphism:
    \[
        \bigwedge^q : \GL (n,\R) \to \GL (N,\R), \quad X\mapsto (\det X_{\mathbb{I}_{n,q}(s)\mathbb{I}_{n,q}(t)})_{s,t\in \left[\binom{n}{q}\right]}.
    \]
    Here, $N:=\binom{n}{q}$.
\end{proposition}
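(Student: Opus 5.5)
The plan is to verify three things in turn: that the entrywise formula defines a matrix in $\GL(N,\R)$, that the map is continuous, and that it is multiplicative. The key input is the Cauchy--Binet formula. We use the fixed enumeration $\mathbb{I}_{n,q}$ to identify the index set $[N]$ with $\binom{[n]}{q}$. Under this identification, the $(I,J)$-entry of $\bigwedge^q X$ is $\det X_{IJ}$.

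Continuity is immediate. Each entry $\det X_{IJ}$ is a polynomial in the entries of $X$, so $X\mapsto \bigwedge^q X$ is polynomial, hence continuous, as a map $\GL(n,\R)\to \mathrm{M}(N,\R)$.

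For multiplicativity, take $X,Y\in \mathrm{M}(n,\R)$ and $I,J\in\binom{[n]}{q}$. Then $(XY)_{IJ}=X_{I,[n]}Y_{[n],J}$ is the product of a $q\times n$ matrix and an $n\times q$ matrix. The Cauchy--Binet formula gives
\[
\det\bigl((XY)_{IJ}\bigr)=\sum_{K\in\binom{[n]}{q}}\det X_{IK}\,\det Y_{KJ}.
\]
The right-hand side is exactly the $(I,J)$-entry of the product $\bigl(\bigwedge^q X\bigr)\bigl(\bigwedge^q Y\bigr)$, where $K$ runs over the summation index. Hence $\bigwedge^q(XY)=\bigwedge^q X\,\bigwedge^q Y$ for all $X,Y$.

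Next, the identity is preserved. For $K\subset[n]$ of size $q$, the submatrix $(1_n)_{IJ}$ is the identity $1_q$ when $I=J$. When $I\neq J$, some row of $(1_n)_{IJ}$ is zero, so the determinant vanishes. Thus $\bigwedge^q 1_n=1_N$.

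Invertibility then follows formally. For $X\in\GL(n,\R)$, multiplicativity and the identity computation give
\[
\bigwedge^q X\,\bigwedge^q X^{-1}=\bigwedge^q X^{-1}\,\bigwedge^q X=1_N.
\]
So $\bigwedge^q X\in\GL(N,\R)$, which shows the map is well-defined with values in $\GL(N,\R)$ and is a group homomorphism.

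The only substantive step is the Cauchy--Binet formula. If it cannot be cited from \cite{HornJohnson2013}, I would prove it by multilinearity. Expand each column of $Y_{[n],J}$ so that $\det(X_{I,[n]}Y_{[n],J})$ becomes a sum over maps $[q]\to[n]$. Terms with repeated indices vanish. The remaining terms are grouped by their image $K$ and reordered, and the signs collect into $\det Y_{KJ}$.
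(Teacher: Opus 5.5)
Your proof is correct. Cauchy--Binet gives $\bigwedge^q(XY)=\bigwedge^q X\,\bigwedge^q Y$, the identity computation gives $\bigwedge^q 1_n=1_N$, and well-definedness into $\GL(N,\R)$ and continuity follow as you say. The paper gives no proof of its own and only cites Horn--Johnson, where multiplicativity of compound matrices comes from this same Cauchy--Binet argument; one small slip is that the stray ``For $K\subset[n]$ of size $q$'' in your identity step should read ``for $I,J\in\binom{[n]}{q}$''.
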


Throughout this paper, for $n\in \Z_{\geq 1}$, fix the notation as below:
\[
    \rmB(n):=\{X\in \GL(n,\R)\mid x_{ij}=0 \text{ for all }i>j\}.
\]

We will also use the following elementary homomorphisms in Section \ref{section:properaction}.

\begin{proposition}\label{proposition:phiij}
    For each $n\in \Z_{\geq 1}$ and $i,j\in [n]$ with $i\leq j$,
    the following map is a continuous group homomorphism:
    \begin{align*}
        \Phi_{ij}: \rmB(n) &\to \rmB(j-i+1), \\
        \begin{pmatrix}
            x_{11} & x_{12} &\cdots & x_{1n}\\
            0 & \ddots &\ddots & \vdots\\
            \vdots & \ddots & \ddots & x_{n-1 n }\\
            0 & \cdots & 0 &x_{nn}
        \end{pmatrix} &\mapsto 
        \begin{pmatrix}
            x_{ii} & x_{ii+1} &\cdots & x_{ij}\\
            0 & \ddots &\ddots & \vdots\\
            \vdots & \ddots & \ddots & x_{j-1 j }\\
            0 & \cdots & 0 &x_{jj}
        \end{pmatrix}.
    \end{align*}
\end{proposition}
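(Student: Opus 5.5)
We check directly that $\Phi_{ij}$ is well defined, continuous and multiplicative, by computing entries of products of upper triangular matrices.

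\emph{Well-definedness.} Take $X=(x_{st})\in\rmB(n)$ and $i\leq j$. The submatrix $\Phi_{ij}(X)=(x_{st})_{i\leq s,t\leq j}$ is upper triangular, since its below-diagonal entries are entries of $X$ with $s>t$, which vanish. Its diagonal entries are $x_{ii},\dots,x_{jj}$. Because $X$ is upper triangular and invertible, $\det X=\prod_{s}x_{ss}\neq 0$, so every $x_{ss}$ is nonzero. Hence $\det\Phi_{ij}(X)=\prod_{s=i}^{j}x_{ss}\neq 0$, and $\Phi_{ij}(X)\in\rmB(j-i+1)$.

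\emph{Continuity.} The map $\Phi_{ij}$ is the restriction of a coordinate projection $\mathrm{M}(n,\R)\to\mathrm{M}(j-i+1,\R)$. That projection is linear, so $\Phi_{ij}$ is continuous.

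\emph{Multiplicativity.} This is the main step. Let $X,Y\in\rmB(n)$ and fix indices $i\leq s,t\leq j$. Then
\[
(XY)_{st}=\sum_{k=1}^{n}x_{sk}y_{kt}.
\]
The term $x_{sk}$ vanishes unless $k\geq s$, and $y_{kt}$ vanishes unless $k\leq t$. So only indices with $s\leq k\leq t$ contribute. Such $k$ satisfy $i\leq k\leq j$, which gives
\[
(XY)_{st}=\sum_{k=i}^{j}x_{sk}y_{kt}=\bigl(\Phi_{ij}(X)\Phi_{ij}(Y)\bigr)_{st}.
\]
If $s>t$, both sides are $0$ because both matrices are upper triangular. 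Hence $\Phi_{ij}(XY)=\Phi_{ij}(X)\Phi_{ij}(Y)$.

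Multiplicativity is enough for a group homomorphism: $\Phi_{ij}(I_n)=I_{j-i+1}$, and inverses are then preserved automatically.

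The only point needing care is the index truncation in the multiplicativity step. It works precisely because the block is a contiguous principal block, so no contribution from indices outside $[i,j]$ survives in the product.
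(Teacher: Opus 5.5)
Your proposal is correct and follows essentially the same route as the paper, which also verifies multiplicativity entrywise by truncating $\sum_{k}x_{sk}y_{kt}$ to the range $s\leq k\leq t$ inside the block; the paper merely writes the indices in shifted form $s+i-1$, $t+i-1$. Your explicit checks of well-definedness (nonzero diagonal entries) and continuity fill in details that the paper leaves to the reader.
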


One can see that the above proposition holds.
In fact, we have:
\begin{align*}
    \Phi_{ij}(XY)_{st}&=(XY)_{(s+i-1)(t+i-1)}=\sum_{1\leq k\leq n}x_{(s+i-1)k}y_{k(t+i-1)}\\
    &= \sum_{s+i-1\leq k\leq t+i-1}x_{(s+i-1)k}y_{k(t+i-1)}\\
    &= \sum_{s\leq k\leq t}(\Phi_{ij}(X))_{sk}(\Phi_{ij}(Y))_{kt}
    =(\Phi_{ij}(X)\Phi_{ij}(Y))_{st}.
\end{align*}

\section{Preliminaries on bornological spaces and coarse spaces}

In this section, we recall the definition and some properties of bornological spaces and  coarse spaces.

\subsection{Notations on relations}\label{subsection:notation}

In this section, we fix some terminology and notation.
The notation defined in this section will be used throughout this paper.

For each set $S$, 
we write $\mathcal{P}(S)$ for the family of all subsets of $S$, $\diag(S)$ for the diagonal set of $S$, and $\# S$ for the cardinality of $S$.
For sets $S$ and $T$, a subset of $T \times S$ is called a \emph{relation} from $S$ to $T$.
For a relation $E$ from $S$ to $T$, we define the relation \(E^\dagger\) from $T$ to $S$ by
\[
E^\dagger := \{ (x,y) \mid (y,x) \in E \} \subset S \times T.
\]

For sets $S,T$ and $U$, and for
$E \in \mathcal{P}(T \times S)$ and $F \in \mathcal{P}(U \times T)$, we define their composition
$F \circ E \in \mathcal{P}(U \times S)$ by
\[
F \circ E
:= \{ (z,x) \in U \times S
\mid \text{there exists } y \in T 
\text{ such that } (z,y) \in F \text{ and } (y,x) \in E \}.
\]
We also write $\diag(S) := \{ (x,x) \mid x \in S \} \subset S \times S$.

For each relation $E$ from $S$ to $T$ and each subset $A$ of $S$, we shall define the subset $E[A]$ of $T$ by 
\[
E[A] := \{ y \in T \mid \text{there exists } a \in A \text{ such that } (y,a) \in E \}.
\]
When $S = T$, the subset $E[A]$ is called the \emph{$E$-neighborhood} of $A$ by $E$.
In particular, for $E\in \mathcal{P}(T\times S)$ and $p\in S$, put $E[p]:=E[\{p\}]$.

Throughout this paper, 
a map $f : S \rightarrow T$ is regarded as a relation from $S$ to $T$, that is, 
\[
f = \{ (f(x),x) \mid x \in S \} \subset T \times S.
\]

Next, we define the \emph{adjoint operator} as below:

\begin{definition}[cf.~\cite{MiyajiNagayaOgawaOkuda2026}]
    Let $S$ and $T$ be both sets. 
    For each relation $\eta \subset T \times S$, 
    we define the operator 
    \[
    \Ad_\eta : \mathcal{P}(S \times S) \rightarrow \mathcal{P}(T \times T), ~ E \mapsto \eta \circ E \circ \eta^{\dagger}.
    \]
\end{definition}

The following proposition is a basic property of the adjoint operator:

\begin{proposition}
    Let $S,T$ and $U$ be sets.
    For each $\eta\in \mathcal{P}(T\times S)$ and $\theta \in \mathcal{P}(U\times T)$, the equation 
    $\Ad_{\theta \circ \eta}=\Ad_\theta\circ \Ad_\eta$ holds.  
\end{proposition}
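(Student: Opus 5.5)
The plan is to unfold both sides of $\Ad_{\theta\circ\eta}(E)=\Ad_\theta(\Ad_\eta(E))$ using only the definition of the adjoint operator, so the proof reduces to associativity of relational composition and the behaviour of $\dagger$ under composition. Fix $E\in\mathcal{P}(S\times S)$. By definition,
\[
\Ad_{\theta\circ\eta}(E)=(\theta\circ\eta)\circ E\circ(\theta\circ\eta)^\dagger,
\qquad
\Ad_\theta(\Ad_\eta(E))=\theta\circ(\eta\circ E\circ\eta^\dagger)\circ\theta^\dagger .
\]
So it suffices to establish two elementary facts about relations.

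\textbf{Associativity.} For relations $E\in\mathcal{P}(T\times S)$, $F\in\mathcal{P}(U\times T)$ and $G\in\mathcal{P}(V\times U)$ we have $(G\circ F)\circ E=G\circ(F\circ E)$. Both sides equal
\[
\{(w,x)\mid \exists\, y\in T,\ z\in U \text{ with } (w,z)\in G,\ (z,y)\in F,\ (y,x)\in E\}.
\]
This follows by unpacking the existential quantifiers in the definition of $\circ$. In particular, bracketing of iterated compositions can be dropped.

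\textbf{Daggers reverse composition.} For $E\in\mathcal{P}(T\times S)$ and $F\in\mathcal{P}(U\times T)$ we have $(F\circ E)^\dagger=E^\dagger\circ F^\dagger$. Indeed, $(x,z)\in(F\circ E)^\dagger$ iff there is $y$ with $(z,y)\in F$ and $(y,x)\in E$. This is the same as $(x,y)\in E^\dagger$ and $(y,z)\in F^\dagger$, which is exactly the condition $(x,z)\in E^\dagger\circ F^\dagger$.

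Combining the two facts gives
\[
(\theta\circ\eta)\circ E\circ(\theta\circ\eta)^\dagger=\theta\circ\eta\circ E\circ\eta^\dagger\circ\theta^\dagger=\theta\circ(\eta\circ E\circ\eta^\dagger)\circ\theta^\dagger,
\]
which is the claimed identity. There is no real obstacle. The only point needing care is keeping the paper's convention that a relation from $S$ to $T$ lives in $T\times S$, so that composition reads right-to-left like maps and the order reversal under $\dagger$ comes out correctly.
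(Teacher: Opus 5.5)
Your proof is correct: the identity reduces to associativity of relational composition and the reversal rule $(\theta\circ\eta)^\dagger=\eta^\dagger\circ\theta^\dagger$, and you verify both correctly under the paper's right-to-left convention for relations in $T\times S$. The paper states this proposition as a basic property and gives no proof, so your argument supplies the routine verification it leaves implicit.
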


Note that for each map $f : S \rightarrow T$ and each $E \in \mathcal{P}(S \times S)$ and $F \in \mathcal{P}(T \times T)$, we have
\begin{align*}    
\Ad_f(E) &= \{ (f(x_1),f(x_2)) \in T\times T\mid (x_1,x_2) \in E \} \in \mathcal{P}(T \times T), \\
\Ad_{f^\dagger}(F) &= \{ (x_1,x_2) \in S \times S \mid (f(x_1),f(x_2)) \in F \}  \in \mathcal{P}(S \times S),
\end{align*}
and $\diag(S)\subset f^\dagger \circ f$.

\subsection{Bornological spaces}
In this subsection, we recall the definition of bornological spaces (see \cite{HogbeHenri77}) and some basic propositions.

Bornological spaces are defined as below:

\begin{definition}\label{def:bornology}
    Let $S$ be a set.
    A family $\mathcal{B}\subset \mathcal{P}(S)$ of subsets of $S$ is called a \emph{bornology} on $S$ if it satisfies the following three conditions. 
    \begin{enumerate} 
        \item $\mathcal{B}$ is a cover of $S$ (i.e.~ $\bigcup_{B\in \mathcal{B}}B=S$).
        \item $B_1 ,B_2\in \mathcal{B}$ implies $B_1\cup B_2\in \mathcal{B}$. 
        \item $B\in \mathcal{B}$ and $B'\subset B$ imply $B'\in \mathcal{B}$. 
    \end{enumerate}
    The pair $(S,\mathcal{B})$ is called a \emph{bornological space}.
\end{definition}

A family $\mathcal{B}_0\subset \mathcal{P}(S)$ is called a \emph{bornological base} on $S$ if the following conditions hold: \begin{enumerate} \item $\bigcup_{B\in\mathcal{B}_0}B=S$; \item for any $B_1,B_2\in\mathcal{B}_0$, there exists $B_0\in\mathcal{B}_0$ such that $B_1\cup B_2\subset B_0$. \end{enumerate} In this case, we write \[ \langle\mathcal{B}_0\rangle :=\{B\subset S\mid B\subset B_0 \text{ for some } B_0\in\mathcal{B}_0\}, \] which forms a bornology on $S$.

Next, we recall two examples of bornological spaces:

\begin{example}\label{example:bornology}
    We have the following:
    \begin{enumerate}
        \item Let $(S,d)$ be a metric space. Then the family $\mathcal{B}_d(S)$ of all metrically bounded subsets forms a bornology on $S$. 
        It is called the \emph{metric bornology}. 
        \item Let $S$ be a Hausdorff space. Then the family $\mathcal{B}_\mathrm{cpt}(S)$ of all relatively compact subsets forms a bornology on $S$. 
        It is called the \emph{compact bornology}.
    \end{enumerate}
\end{example}

Note that for a metric space $(S,d)$, the space is Heine-Borel (i.e.~ every bounded closed subset is compact) if and only if the equality $\mathcal{B}_d(S)=\mathcal{B}_\mathrm{cpt}(S)$ holds.

In the rest of this section, let $(S,\mathcal{B}_S)$ and  $(T,\mathcal{B}_T)$ be bornological spaces.

We shall define the terminologies of maps between bornological spaces as below:

\begin{definition}\label{def:bmaps}
    Let $f$ be a map from $S$ to $T$.
    \begin{enumerate}
        \item The map $f$ is called \emph{bornological} if $f(B)\in \mathcal{B}_T$ for every $B\in \mathcal{B}_S$.
        \item The map $f$ is called  \emph{bornologically proper} (abbreviated as B-proper in the sequel) if $f^\dagger[D]=f^{-1}(D)\in \mathcal{B}_S$ for every $D\in \mathcal{B}_T$.
    \end{enumerate}
\end{definition}

\begin{remark}\label{remark:btop}
    Suppose that $S$ and $T$ are Hausdorff spaces equipped with their compact bornologies.
    Then, note that every continuous map $f:S\to T$ is bornological.
    Furthermore, for each continuous map $f:S\to T$, 
    $f$ is topologically proper if and only if $f$ is B-proper.
\end{remark}

Let us recall the definitions of direct product of bornologies and induction of a bornology.

\begin{definition}\label{def:induction}
    We define the following two bornologies:
    \begin{enumerate}
        \item For a family of bornological spaces $\{(S_\lambda, \mathcal{B}_\lambda)\}_{\lambda\in \Lambda}$, the set-theoretical direct product $\Pi_{\lambda\in \Lambda}\mathcal{B}_\lambda$ forms a bornological base on $\Pi_{\lambda\in \Lambda} S_\lambda$.
        We call $\langle \Pi_{\lambda\in \Lambda}\mathcal{B}_\lambda\rangle$ the \emph{direct product bornology} on $\Pi_{\lambda\in \Lambda} S_\lambda$.
        \item Let $P$ be a set and $f$ a map from $P$ to the bornological space $S$.
        Then the family $f^*\mathcal{B}_S:=\{f^{-1}(B)\subset P\mid B\in \mathcal{B}_S\}$ forms a bornological base on $P$.
        We call $\langle f^*\mathcal{B}_S \rangle $ the \emph{inverse image bornology} of $\mathcal{B}_S$ by the map $f$. 
        In particular, if $P$ is a subset of $S$ and the map $f$ is the inclusion map, the bornology $f^*\mathcal{B}_S =\langle f^*\mathcal{B}_S \rangle $ is called the \emph{subbornology} of $\mathcal{B}_S$.
    \end{enumerate}
\end{definition}

In the remainder of this subsection, let us focus on compact  bornologies on Hausdorff spaces.
Throughout this paper, for a set $S$ and a subset $P\subset S$, 
let us write the inclusion map from $P$ to $S$ for $\iota_P$.

\begin{proposition}\label{proposition:closedeqqborno}[cf.~Proposition 5.6 in \cite{Nagaya2026}]
    Let $S$ be a locally compact Hausdorff space, and 
    $P$ a subset of $S$.
    Then the following conditions on $P$ are equivalent:
    \begin{enumerate}
        \item\label{proposition:closedeqqborno:closed}  The set $P$ is closed in $S$.
        \item\label{proposition:closedeqqborno:bornoequation}  The equality $\iota_P^*\mathcal{B}_\mathrm{cpt}(S)=\mathcal{B}_\mathrm{cpt}(P)$ holds. 
    \end{enumerate}
\end{proposition}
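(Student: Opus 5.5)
We prove the two implications separately. Throughout, $S$ is locally compact Hausdorff and $P\subset S$. Recall that $\iota_P^*\mathcal{B}_\mathrm{cpt}(S)$ consists of the sets $B\cap P$ with $B$ relatively compact in $S$. Equivalently, it consists of the subsets of $P$ whose closure in $S$ is compact. The bornology $\mathcal{B}_\mathrm{cpt}(P)$ consists of the subsets of $P$ whose closure in $P$, for the subspace topology, is compact. The inclusion $\mathcal{B}_\mathrm{cpt}(P)\subset\iota_P^*\mathcal{B}_\mathrm{cpt}(S)$ always holds, with no hypothesis on $P$. Indeed, if $A\subset K\subset P$ with $K$ compact in $P$, then $K$ is compact in $S$. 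Since $S$ is Hausdorff, $K$ is closed in $S$, so $\overline{A}^S\subset K$ is compact. So the content of the proposition lies in the reverse inclusion.

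\emph{Proof that (1) implies (2).} Assume $P$ is closed and let $A\subset P$ have compact closure $\overline{A}^S$ in $S$. Since $P$ is closed, $\overline{A}^S\subset P$, and it is compact in $P$. Hence $\overline{A}^P=\overline{A}^S\cap P=\overline{A}^S$ is compact, so $A\in\mathcal{B}_\mathrm{cpt}(P)$.

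\emph{Proof that (2) implies (1).} We argue by contraposition. Suppose $P$ is not closed and pick $x\in\overline{P}\setminus P$. By local compactness, choose a compact neighborhood $U$ of $x$ in $S$, and set $A:=U\cap P$. Then $A\subset U$ is relatively compact in $S$, so $A\in\iota_P^*\mathcal{B}_\mathrm{cpt}(S)$. It remains to show that $A$ is not relatively compact in $P$. Suppose instead that $A\subset K$ for some compact $K\subset P$. Then $K$ is closed in $S$, so $\overline{A}^S\subset K\subset P$. On the other hand, $x\in\overline{A}^S$. To see this, let $V$ be any neighborhood of $x$; then $V\cap\operatorname{int}U$ is again a neighborhood of $x$, and it meets $P$ because $x\in\overline{P}$, so $V$ meets $A$. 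This gives $x\in P$, a contradiction. Hence the two bornologies differ.

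The only delicate point is this last step. It needs a neighborhood of $x$ that is itself compact; this is exactly where local compactness is used, and the implication fails without it. It also uses that compact subsets of the Hausdorff space $S$ are closed, which is needed to conclude that $\overline{A}^S\subset P$.
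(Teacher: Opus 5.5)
Your proof is correct and complete. The three steps all work:
\begin{itemize}
\item the automatic inclusion $\mathcal{B}_\mathrm{cpt}(P)\subset\iota_P^*\mathcal{B}_\mathrm{cpt}(S)$, which the paper also remarks holds in general;
\item the direction $(1)\Rightarrow(2)$, via $\overline{A}^S\subset P$ for closed $P$;
\item the contrapositive for $(2)\Rightarrow(1)$, which takes a compact neighborhood $U$ of a point $x\in\overline{P}\setminus P$ and shows $x\in\overline{U\cap P}^S$.
\end{itemize}

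The paper gives no proof of its own here and only cites Proposition~5.6 of \cite{Nagaya2026}, so there is no route to compare against. Your argument is a self-contained justification. It also makes visible that local compactness is needed only for $(2)\Rightarrow(1)$, and only at points of $\overline{P}\setminus P$.
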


In the above setting, 
note that the inclusion $\iota_P^*\mathcal{B}_\mathrm{cpt}(S)\supset \mathcal{B}_\mathrm{cpt}(P)$ always holds in general.

Further, one can see that the following proposition holds:

\begin{proposition}\label{propsition:properB-proper}
    Let $S$ and $T$ be Hausdorff spaces and $f$ be a map from $S$ to $T$.
    Assume that $f$ is continuous.
    Then the map $f$ is topologically proper if and only if $f$ is B-proper from $(S,\mathcal{B}_\text{cpt}(S))$ to $(T,\mathcal{B}_\text{cpt}(T))$.
\end{proposition}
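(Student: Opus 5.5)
Since $f$ is continuous, the statement is really a comparison of definitions: compact bornologies on both sides turn "preimages of compact sets are compact" into "preimages of relatively compact sets are relatively compact". The plan is to prove the two implications separately. The one ingredient beyond definitions is that closed subsets of compact sets are compact, which holds in any space, together with Hausdorffness of $T$, so that compact subsets of $T$ are closed.

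\emph{Topologically proper $\Rightarrow$ B-proper.} Take $D\in\mathcal{B}_\mathrm{cpt}(T)$, so that $\overline{D}$ is compact in $T$. Topological properness makes $f^{-1}(\overline{D})$ compact in $S$. It contains $f^{-1}(D)$, so $f^{-1}(D)$ is relatively compact: its closure is a closed subset of the compact set $f^{-1}(\overline{D})$, which is closed because $S$ is Hausdorff. Hence $f^{-1}(D)\in\mathcal{B}_\mathrm{cpt}(S)$.

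\emph{B-proper $\Rightarrow$ topologically proper.} Take a compact $C\subset T$. Then $C\in\mathcal{B}_\mathrm{cpt}(T)$, so B-properness gives that $f^{-1}(C)$ is relatively compact in $S$. Since $T$ is Hausdorff, $C$ is closed, and by continuity of $f$ the set $f^{-1}(C)$ is closed in $S$. A closed relatively compact set equals its compact closure, so $f^{-1}(C)$ is compact.

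There is no real obstacle. The only point needing care is in the second direction: continuity is used to pass from relative compactness to actual compactness, and it is exactly here that the Hausdorffness of $T$ (closedness of compact sets) is required. This also covers Remark~\ref{remark:btop}.
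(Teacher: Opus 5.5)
Your proof is correct. The paper gives no argument for this proposition (it only says ``one can see''), and your two-direction check from the definitions is exactly the routine verification being left to the reader: compact sets in the Hausdorff space $T$ are closed, so by continuity $f^{-1}(C)$ is closed, and a closed relatively compact set is compact.

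Your closing aside slightly overstates things. Remark~\ref{remark:btop} also asserts that continuous maps are bornological, which your argument does not address. That part is equally easy, since $f(A)\subset f(\overline{A})$ and $f(\overline{A})$ is compact.
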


\subsection{Preliminaries on coarse spaces}

In this subsection, let us recall definitions of coarse spaces and some properties of them.

\begin{definition}\label{definition:coarse}
    Let $S$ be a set.
    A family $\mathcal{E}\subset \mathcal{P}(S\times S)$ is called a \emph{coarse structure} on $S$ if it satisfies the following five conditions (see Section \ref{subsection:notation} for notation):
    \begin{enumerate}
        \item $\diag(S) \in \mathcal{E}$.
        \item $E \in \mathcal{E}$ implies $E^\dagger \in \mathcal{E}$.
        \item $E_1, E_2 \in \mathcal{E}$ implies $E_1 \cup E_2 \in \mathcal{E}$.
        \item $E_1, E_2 \in \mathcal{E}$ implies $E_1\circ E_2\in \mathcal{E}$.
        \item $E \in \mathcal{E}$ implies $E' \in \mathcal{E}$ for each $E' \subset E$.
    \end{enumerate}
    The pair $(S, \mathcal{E})$ is called a \emph{coarse space} and 
    each element of $\mathcal{E}$ is referred to as an \emph{coarse entourage} or simply an \emph{entourage}.
    An entourage $E\in \mathcal{E}$ is called \emph{symmetric} if the equality $E=E^\dagger$ holds.
\end{definition}

A subset $B$ of $S$ is called \emph{coarsely bounded} or simply \emph{bounded} in $(S,\mathcal{E})$ 
if $B \times B \in \mathcal{E}$.
Note that the following basic three properties of boundedness hold:
\begin{itemize}
    \item When $S \neq \emptyset$, 
        a subset $B$ of $S$ is bounded if and only if $B \subset E[p]$ for some $p\in S$ and $E \in \mathcal{E}$ (see \cite[Proposition 2.16]{Roe2003LectureCoarse}).
    \item any subset of a bounded set is also bounded,
    \item for any bounded set $B$ and any entourage $E$, the $E$-neighborhood $E[B]$ of $B$ is also bounded.
\end{itemize}

We put 
\[
\mathcal{B}_\mathcal{E}:=\{\cup_{i=1}^nB_i\subset S \mid n\in \Z_{\geq 1}, \{B_i\}_{i=1}^n\subset \mathcal{P}(S) \text{ is a family of bounded sets} \}.
\]

It is worth emphasizing that $\mathcal{B}_\mathcal{E}$ forms a bornology on $S$.
The bornology $\mathcal{B}_\mathcal{E}$ is called \emph{the bornology induced by the coarse structure} $\mathcal{E}$.

A coarse space $(S,\mathcal{E})$ is called  \emph{coarsely connected} if 
for any pair of points $p,q \in S$, 
there exists an entourage $E \in \mathcal{E}$ such that $q \in E[p]$.
In the setting where $(S,\mathcal{E})$ is coarsely connected, note that $\mathcal{B}_\mathcal{E}$ coincides with the family of all bounded subsets.

We call a subset $\mathcal{E}_0$ of $\mathcal{P}(S\times S)$  a \emph{base of a coarse structure} if it satisfies the following four conditions (cf.~\cite{DikranjanZavaCatCoarseGp2020}):

\begin{enumerate}
    \item[(i)'] There exists $E_0 \in \mathcal{E}_0$ such that  $\diag (S) \subset E_0$.
    \item[(ii)'] For each $E_1 \in \mathcal{E}_0$, there exists $E_0 \in \mathcal{E}_0$ such that $E_1^\dagger \subset E_0$.
    \item[(iii)'] For each $E_1,E_2 \in \mathcal{E}_0$, 
    there exists $E_0 \in \mathcal{E}_0$ such that $E_1 \cup E_2 \subset E_0$.
    \item[(iv)'] For each $E_1,E_2 \in \mathcal{E}_0$, 
    there exists $E_0 \in \mathcal{E}_0$ such that $E_1 \circ E_2 \subset E_0$.
\end{enumerate}

For a base of a coarse structure $\mathcal{E}_0$, the following family $\langle \mathcal{E}_0\rangle$ forms a coarse structure: 
\[
    \langle \mathcal{E}_0\rangle:=\{E\subset S\times S\mid \text{there exists } E_0\in \mathcal{E}_0 \text{ such that }E\subset E_0\}.
\]

\begin{example}\label{example:coarse}
Here are two examples of coarse spaces.
\begin{enumerate}

\item Let $(S,d)$ be a metric space. For each $r\geq 0$, we put 
        \[
        E_r:=\{(x,y)\in S\times S\mid d(x,y)\leq r\}. 
        \]
        Then the family $\mathcal{E}_0^d:=\{E_r\mid r\geq 0\}$ forms a coarse base and the coarse structure $\mathcal{E}^d(S)$ induced by $\mathcal{E}_0^d$ is called the \emph{bounded coarse structure} on $S$.
        For simplicity, we sometimes denote $\mathcal{E}^d(S)$ by $\mathcal{E}^d$.
        In this setting, a subset $B$ of $S$ is bounded for $\mathcal{E}^d(S)$ if and only if $B$ is bounded with respect to the metric $d$ on $S$. 
\item Let $S$ be a set, $G$ a locally compact Hausdorff group and     $\rho$ a $G$-action on $S$.
    For each compact subset $C\subset G$,  we put 
    \[
    E^\rho_C:=\{(x,y)\in S\times S\mid x\in C\cdot_\rho y\}.
    \]
    Then the family $\mathcal{E}_0^\rho:=\{E^\rho_C\mid C\subset G \text{ : compact}\}$ forms a coarse base, and the coarse structure $\mathcal{E}^\rho$ induced by this is called the \emph{coarse structure induced by} $\rho$.
    Furthermore, the following basic three properties hold:
    \begin{itemize}
        \item Let us put:
        \[
        \mathcal{K}(G):=\{C\subset G\mid C \text{ :compact}, C=C^{-1}, e\in C\}.
        \]
        Then the family $\{E_C^\rho \mid C\in \mathcal{K}(G)\}$ is also a coarse base of $\mathcal{E}^\rho$ on $S$.
        \item The coarse space $(S,\mathcal{E}^\rho)$ is coarsely connected, if and only if the action $\rho$ is transitive.
        \item Assume that $S$ is Hausdorff space, and $\rho$ is continuous. Then, the induced bornology $\mathcal{B}_{\mathcal{E}^\rho}$ is a subset of $\mathcal{B}_{\text{cpt}}(S)$.
    \end{itemize}
    In particular, we focus on the following three actions as below:
    \begin{align*}
        \rho^\text{L}&: G\times G\to G, \quad (g,h)\mapsto gh,\\
        \rho^\text{R}&: G\times G\to G, \quad (g,h)\mapsto hg^{-1},\\
        \rho^\text{LR}&: (G\times G)\times G\to G, \quad ((g,\ell),h)\mapsto gh\ell^{-1}.
    \end{align*}
    The coarse structures on $G$ induced by $\rho^\mathrm{L}$, $\rho^\mathrm{R}$, and $\rho^\mathrm{LR}$ are called the \emph{L-coarse structure}, \emph{R-coarse structure}, and \emph{LR-coarse structure} on $G$, respectively, and are denoted by $\mathcal{E}^\mathrm{L}$, $\mathcal{E}^\mathrm{R}$, and $\mathcal{E}^\mathrm{LR}$, respectively.
    Further, for a compact set $C\subset G$, put
    $E_C^\text{L}:=E_C^{\rho^\text{L}}$,
    $E_C^\text{R}:=E_C^{\rho^\text{R}}$.
    and $E_C^\text{LR}:=E_{C\times C}^{\rho^\text{LR}}$.
    It immediately holds that the family $\{E_C^\text{LR}\mid C\in \mathcal {K}(G)\}$ forms a coarse base of $\mathcal{E}^\text{LR}$.
    Furthermore, $\mathcal{B}_{\mathcal{E}^\text{L}}=\mathcal{B}_{\mathcal{E}^\text{R}}=\mathcal{B}_{\mathcal{E}^\text{LR}}=\mathcal{B}_{\text{cpt}}(G)$.
    Note that, when $G$ is abelian, then $\mathcal{E}^\mathrm{L}= \mathcal{E}^\mathrm{R}=\mathcal{E}^\mathrm{LR}$.
\end{enumerate}
\end{example}

Next we shall define the product of coarse spaces as below:

\begin{definition}
    Let $\{(S_\lambda,\mathcal{E}_\lambda)\}_{\lambda \in \Lambda}$ be a family of coarse spaces.
    Fix a notation:
    \[
    \otimes_{\lambda\in \Lambda}\mathcal{E}_\lambda :=\{E\subset \Pi_{\lambda \in \Lambda} S_\lambda\times \Pi_{\lambda \in \Lambda} S_\lambda\mid \text{for all }\lambda \in \Lambda, \Ad_{\pi_{\lambda}}(E)\in \mathcal{E}_\lambda \}.
    \]
    Then this forms a coarse structure on $\Pi_{\lambda \in \Lambda} S_\lambda$, and $(\Pi_{\lambda \in \Lambda} S_\lambda,\otimes_{\lambda\in \Lambda}\mathcal{E}_\lambda)$ is called the \emph{product coarse structure} of $\{(S_\lambda,\mathcal{E}_\lambda)\}_{\lambda \in \Lambda}$.
    Here, $\pi_{\lambda}$ is the projection to $S_{\lambda}$ for each $\lambda\in \Lambda$.
\end{definition}

It is remarkable that, 
for a family of coarsely connected coarse spaces $\{(S_\lambda,\mathcal{E}_\lambda)\}_{\lambda \in \Lambda}$,
the bornology induced by the coarse structure $\otimes_{\lambda\in \Lambda}\mathcal{E}_\lambda$ coincides with the product bornology $\Pi_{\lambda \in \Lambda}\mathcal{B}_{\mathcal{E}_\lambda}$.

\begin{example}
    Fix $n\in \Z_{\geq 1}$ and let $\{(S_i,d_i)\mid i\in [n]\}$ be a family of metric spaces.
    Then, the product coarse structure $\otimes \mathcal{E}^{d_i}$ of the bounded coarse structures coincides with the bounded coarse structure induced by any of the standard \(\ell^p\)-product metrics on \(\prod_{i=1}^n X_i\), \(1\leq p\leq\infty\). Indeed, the product coarse structure is induced by  
    $d_\infty$ (cf.~\cite{BanakhBanakh2022}),
    and, for \(1\leq p<\infty\),
    $d_\infty\leq d_p\leq n^{1/p}d_\infty$.
    
    Hence all these metrics induce the same bounded coarse structure.
\end{example}

\subsection{Maps between coarse spaces}

In this section, 
mainly following \cite{DikranjanZavaCatCoarseGp2020, Roe2003LectureCoarse}, 
we set up our terminologies for maps between coarse spaces.
To define the coarse equivalence, we prepare the following:

\begin{definition}
    Let $S$ be a set, $T$ a coarse space, and $f,g$ maps from $S$ to $T$.
    The maps $f$ and $g$ are \emph{close} (denoted by $f\sim g$) if the set 
    \[
    \Xi_{f,g}:=f\circ g^{\dagger}=\{(f(x),g(x))\in T\times T \mid x\in S\}
    \]
    is an entourage of $T$.
\end{definition}

One can see that closeness defines an equivalence relation on the set of all maps
from a fixed set $S$ to a coarse space $T$.

\begin{example}\label{example:closetoR}
    Let $S$ be a nonempty set, and $f,g$ functions from $S$ to $\R$.
    Consider $\R$ is equipped with the bounded coarse structure.
    The maps $f$ and $g$ are close if and only if $\sup_{x\in S}|f(x)-g(x)|<\infty$.
    Let $f_1,g_1,f_2$ and $g_2$ be functions from $S$ to $\R$ with $f_1\sim g_1$ and $f_2\sim g_2$.
    One can see that the following basic four properties of closeness hold:
    \begin{itemize}
        \item $f_1+f_2\sim g_1+g_2$.
        \item $f_1-f_2\sim g_1-g_2$.
        \item $\max\{f_1,f_2\}\sim \max\{g_1,g_2\}$.
        \item $\min\{f_1,f_2\}\sim \min\{g_1,g_2\}$.
    \end{itemize}
\end{example}

Next, we fix some terminology of maps between coarse spaces as below:

\begin{definition}\label{definition:coarsemaps}
Let $(S,\mathcal{E})$ and $(T,\mathcal{F})$ be coarse spaces.
A map $f : S \rightarrow T$ is called: 
\begin{enumerate}
    \item\label{definition:coarsemaps:item:controlled} \emph{controlled} (cf.~\cite{BunkeEngel2020}) if, for each $E\in \mathcal{E}$, 
    \[
    \Ad_f(E) = \{ (f(x_1),f(x_2)) \mid (x_1,x_2) \in E \} \subset T \times T
    \]
    is an entourage on $T$, 
    \item\label{definition:coarsemaps:item:proper}  \emph{coarsely proper} (cf.~\cite{LeitnerVigolo2023}) if the set $f^\dagger[D]=f^{-1}(D)$ is bounded in $S$ for any bounded subset $D$ of $T$,  
    \item\label{definition:coarsemaps:item:coarse} \emph{coarse} (cf.~\cite{Roe2003LectureCoarse}) if it is controlled and coarsely proper,
    \item\label{definition:coarsemaps:item:effectivelyproper}  \emph{effectively proper} (cf.~\cite{DikranjanZavaCatCoarseGp2020}) if, for any $F\in \mathcal{F}$, the relation 
    \[
    \Ad_{f^\dagger}F = \{ (p,q) \mid (f(p),f(q)) \in F \} \subset S \times S
    \]
    is also an entourage on $S$,
    \item\label{definition:coarsemaps:item:embedding}   \emph{a coarse embedding} (cf.~\cite{DikranjanZavaCatCoarseGp2020}) if $f$ is controlled and effectively proper,
    \item\label{definition:coarsemaps:item:surjective}  \emph{coarsely surjective} (cf.~\cite{LeitnerVigolo2023}) if there exists $F\in \mathcal{F}$ such that $F[f(S)] = T$,
    \item\label{definition:coarsemaps:item:coarseeq}  \emph{a coarse equivalence} (cf.~\cite{DikranjanZava2017}) if $f$ is controlled and there exists a controlled map $h: T\to S$ such that $h\circ f\sim \id_S$ and $f\circ h\sim \id_T$.
\end{enumerate}
\end{definition}
In the above setting, note that the following basic three properties hold:
\begin{itemize}
    \item If $f$ is controlled, $f(B)$ is also bounded in $T$ for all bounded set $B$ in $S$. In particular, $f$ is a bornological map from $(S,\mathcal{B}_\mathcal{E})$ to $(T,\mathcal{B}_\mathcal{F})$.
    \item If $f$ is effectively proper, $f$ is coarsely proper.
    \item If $f$ is coarsely proper, then $f$ is a B-proper map from $(S,\mathcal{B}_\mathcal{E})$ to $(T,\mathcal{B}_\mathcal{F})$. Furthermore, if $(S,\mathcal{E})$ is coarsely connected, the converse claim also holds.
\end{itemize}

\begin{example}\label{example:controlledtoR}
    Let $(S,\mathcal{E})$ be a coarse space.
    the function $f:S\to \R$ is controlled if and only if 
    $\sup_{(x,y)\in E}|f(x)-f(y)|<\infty$ for any $E\in \mathcal{E}$.
    Here, $\R$ is equipped with the bounded coarse structure.
    Hence, for controlled functions $f,g$ from $S$ to $\R$,
    the four functions $f+g, f-g,\max\{f,g\}$ and $\min\{f,g\}$ are all controlled.
\end{example}

One can see that the following proposition holds:

\begin{proposition}\label{proposition:composition}
    The properties of being controlled, coarsely proper, coarse,
    effectively proper, a coarse embedding, and a coarse equivalence
    are preserved under composition.
\end{proposition}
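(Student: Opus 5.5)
We must prove Proposition \ref{proposition:composition}: each of the properties controlled, coarsely proper, coarse, effectively proper, coarse embedding and coarse equivalence is preserved under composition. Throughout, fix coarse spaces $(S,\mathcal{E})$, $(T,\mathcal{F})$, $(U,\mathcal{G})$ and maps $f:S\to T$, $g:T\to U$. The main tool is the identity $\Ad_{g\circ f}=\Ad_g\circ\Ad_f$ from the proposition on adjoint operators. We also need its dual $\Ad_{(g\circ f)^\dagger}=\Ad_{f^\dagger}\circ\Ad_{g^\dagger}$, which follows from $(g\circ f)^\dagger=f^\dagger\circ g^\dagger$.

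\emph{Controlled and effectively proper.} If $f,g$ are controlled and $E\in\mathcal{E}$, then $\Ad_f(E)\in\mathcal{F}$, hence $\Ad_{g\circ f}(E)=\Ad_g(\Ad_f(E))\in\mathcal{G}$. If $f,g$ are effectively proper and $G_0\in\mathcal{G}$, then $\Ad_{g^\dagger}(G_0)\in\mathcal{F}$, hence $\Ad_{(g\circ f)^\dagger}(G_0)=\Ad_{f^\dagger}(\Ad_{g^\dagger}(G_0))\in\mathcal{E}$. Coarse embeddings are exactly the maps that are both controlled and effectively proper, so they compose as well.

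\emph{Coarsely proper and coarse.} Let $D\subset U$ be bounded. Then $g^{-1}(D)$ is bounded in $T$ because $g$ is coarsely proper. Hence $(g\circ f)^{-1}(D)=f^{-1}(g^{-1}(D))$ is bounded in $S$ because $f$ is coarsely proper. Coarse maps are those that are both controlled and coarsely proper, so they compose by the previous two facts.

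\emph{Coarse equivalences.} Let $h:T\to S$ and $k:U\to T$ be controlled maps with $h\circ f\sim\id_S$, $f\circ h\sim\id_T$, $k\circ g\sim\id_T$ and $g\circ k\sim\id_U$. We take $h\circ k$ as the candidate inverse of $g\circ f$; it is controlled by the first step. Two lemmas are needed.
\begin{enumerate}
\item If $\phi\sim\psi:X\to Y$ and $\theta:Y\to Z$ is controlled, then $\theta\circ\phi\sim\theta\circ\psi$. Indeed, $\Xi_{\theta\phi,\theta\psi}=\Ad_\theta(\Xi_{\phi,\psi})$.
\item If $\phi\sim\psi:X\to Y$ and $\sigma:W\to X$ is any map, then $\phi\circ\sigma\sim\psi\circ\sigma$. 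Indeed, $\Xi_{\phi\sigma,\psi\sigma}\subset\Xi_{\phi,\psi}$.
\end{enumerate}
Then
\[
h\circ k\circ g\circ f\sim h\circ\id_T\circ f=h\circ f\sim\id_S,
\]
where the first relation applies lemma (2) with $\sigma=f$ to $k\circ g\sim\id_T$, and then lemma (1) with $\theta=h$. The final step uses transitivity of $\sim$, which holds because entourages are closed under composition. The symmetric argument gives $g\circ f\circ h\circ k\sim\id_U$.

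The only point requiring care is this last step: lemma (1) genuinely needs the outer map to be controlled, which is exactly why $h$ (and, in the symmetric argument, $g$) must be controlled.
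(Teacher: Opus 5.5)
Your proof is correct and complete. The paper gives no argument for this proposition (it only says ``one can see'' that it holds); your verification is the standard one, via $\Ad_{g\circ f}=\Ad_g\circ\Ad_f$, $\Ad_{(g\circ f)^\dagger}=\Ad_{f^\dagger}\circ\Ad_{g^\dagger}$ and $(g\circ f)^{-1}(D)=f^{-1}(g^{-1}(D))$, and your two closeness lemmas for the coarse-equivalence case are special cases of the paper's Proposition~\ref{proposition:closecomposition}.
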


\begin{remark}
    Coarse surjectivity is not preserved under composition in general.
    In fact, consider the following two maps:
    \[
    \Z\to \R,n\mapsto n, 
    \quad
    \R\to \R, x\mapsto \begin{cases}
        0 \quad (x\in \Z)\\
        x \quad (x\notin \Z)\\
    \end{cases}.
    \]
    The above two maps are coarsely surjective, but the composition of these maps is not coarsely surjective.
\end{remark}

For maps between coarse spaces, the following three propositions hold:

\begin{proposition}\label{proposition:closecomposition}
    Let $S,T,U$ be coarse spaces, $f,g$ maps from $S$ to $T$, and $\varphi,\psi$ controlled maps from $T$ to $U$.
    Assume $f\sim g$ and $\varphi\sim \psi$.
    Then maps $\varphi\circ f$ and $\psi\circ g$ are also close.
\end{proposition}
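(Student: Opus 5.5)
We must show that if $f\sim g$ and $\varphi\sim\psi$ with $\varphi,\psi$ controlled, then $\varphi\circ f\sim\psi\circ g$. Equivalently, the relation $\Xi_{\varphi\circ f,\psi\circ g}=\{(\varphi(f(x)),\psi(g(x)))\mid x\in S\}$ must be an entourage of $U$. The plan is to factor this relation as a composition of two entourages of $U$ and then use axiom (4) of Definition~\ref{definition:coarse}. We pass through the intermediate point $\varphi(g(x))$.

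First, the image of $\Xi_{f,g}$ under $\varphi$ is an entourage. Since $f\sim g$, the set $\Xi_{f,g}=\{(f(x),g(x))\mid x\in S\}$ is an entourage of $T$. Because $\varphi$ is controlled, the set
\[
\Ad_\varphi(\Xi_{f,g})=\{(\varphi(f(x)),\varphi(g(x)))\mid x\in S\}
\]
is an entourage of $U$. Note that only controlledness of $\varphi$ is used here; controlledness of $\psi$ is not needed.

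Second, the closeness $\varphi\sim\psi$ gives that $\Xi_{\varphi,\psi}=\{(\varphi(y),\psi(y))\mid y\in T\}$ is an entourage of $U$. Its subset
\[
\{(\varphi(g(x)),\psi(g(x)))\mid x\in S\}
\]
is then an entourage by axiom (5).

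Finally, for each $x\in S$, take $z=\varphi(g(x))$ as the middle point. The pair $(\varphi(f(x)),z)$ lies in the first set and $(z,\psi(g(x)))$ lies in the second. Hence $\Xi_{\varphi\circ f,\psi\circ g}$ is contained in
\[
\Ad_\varphi(\Xi_{f,g})\circ\{(\varphi(g(x)),\psi(g(x)))\mid x\in S\},
\]
using the paper's convention that $(z,x)\in F\circ E$ when there is $y$ with $(z,y)\in F$ and $(y,x)\in E$. By axiom (4) this composition is an entourage of $U$, and by axiom (5) so is its subset $\Xi_{\varphi\circ f,\psi\circ g}$. Therefore $\varphi\circ f\sim\psi\circ g$.

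There is no real obstacle here. The only care needed is the order of factors in the relation composition convention, which I have arranged so that the middle point matches.
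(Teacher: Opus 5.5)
Your proof is correct and is essentially the paper's argument. Both place $\Xi_{\varphi\circ f,\psi\circ g}$ inside the composition of an entourage coming from $\varphi\sim\psi$ with the $\Ad$-image of $\Xi_{f,g}$ under a controlled map; the paper routes through $\psi(f(x))$, obtaining $\Xi_{\varphi,\psi}\circ\Ad_{\psi}(\Xi_{f,g})$ and using controlledness of $\psi$, whereas you route through $\varphi(g(x))$ and use controlledness of $\varphi$, a mirror image of the same step.
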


\begin{proposition}\label{proposition:preserveclose}
    The seven properties of maps defined in Definition \ref{definition:coarsemaps} are all preserved under closeness.
    More precisely, the following holds:
    Let $(S,\mathcal{E})$ and $(T,\mathcal{F})$ be coarse spaces, and $f,g$ maps from $S$ to $T$ with $f\sim g$.
    Assume that  $f$ has one of the properties defined in Definition \ref{definition:coarsemaps}.
    Then $g$ also has this property.
\end{proposition}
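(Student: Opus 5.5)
The statement to prove is Proposition~\ref{proposition:preserveclose}: if $f\sim g$ and $f$ has one of the seven properties of Definition~\ref{definition:coarsemaps}, then so does $g$. The plan is to treat the seven properties one at a time, working directly with the relation $\Xi:=\Xi_{f,g}=f\circ g^\dagger\in\mathcal{F}$ and the composition calculus of Subsection~\ref{subsection:notation}. By symmetry of closeness, $\Xi^\dagger=g\circ f^\dagger=\Xi_{g,f}$ is also an entourage.

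\textbf{Controlled.} For $E\in\mathcal{E}$ one checks the inclusion
\[
\Ad_g(E)\subset \Xi^\dagger\circ \Ad_f(E)\circ \Xi .
\]
Indeed, for $(x_1,x_2)\in E$ we have $(g(x_1),f(x_1))\in\Xi^\dagger$, $(f(x_1),f(x_2))\in\Ad_f(E)$ and $(f(x_2),g(x_2))\in\Xi$. The right-hand side is an entourage by axioms (ii), (iv) and (v) of Definition~\ref{definition:coarse}, so $g$ is controlled.

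\textbf{Effectively proper.} The analogous inclusion is
\[
\Ad_{g^\dagger}(F)\subset \Ad_{f^\dagger}(\Xi\circ F\circ \Xi^\dagger).
\]
If $(g(p),g(q))\in F$, then $(f(p),f(q))\in \Xi\circ F\circ\Xi^\dagger$. Since $\Xi\circ F\circ\Xi^\dagger\in\mathcal{F}$, the right-hand side lies in $\mathcal{E}$ because $f$ is effectively proper.

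\textbf{Coarsely proper.} For a bounded $D\subset T$ we have $g^{-1}(D)\subset f^{-1}(\Xi[D])$, because $x\in g^{-1}(D)$ gives $(f(x),g(x))\in\Xi$ with $g(x)\in D$. The set $\Xi[D]$ is bounded by the listed property that neighborhoods of bounded sets are bounded, so $g^{-1}(D)$ is a subset of a bounded set and hence bounded.

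\textbf{Coarse and coarse embedding.} Each of these is a conjunction of properties already handled (controlled with coarsely proper, respectively controlled with effectively proper), so they follow immediately.

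\textbf{Coarsely surjective.} If $F[f(S)]=T$, then $f(S)\subset \Xi[g(S)]$, so $T=(F\circ\Xi)[g(S)]$, and $F\circ\Xi\in\mathcal{F}$.

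\textbf{Coarse equivalence.} Take the controlled map $h$ given for $f$; we already know $g$ is controlled. It remains to get $h\circ g\sim\id_S$ and $g\circ h\sim\id_T$. Proposition~\ref{proposition:closecomposition} applied to $g\sim f$ with the controlled map $h$ gives $h\circ g\sim h\circ f\sim \id_S$. The other composition needs more care, because Proposition~\ref{proposition:closecomposition} places the controlled maps on the outside. Instead we argue directly:
\[
\Xi_{g\circ h,\,f\circ h}=\{(g(h(t)),f(h(t)))\mid t\in T\}\subset \Xi^\dagger .
\]
So $g\circ h\sim f\circ h\sim\id_T$, using transitivity of closeness, i.e.\ composition of entourages.

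The only mildly delicate point is choosing the correct order of composition in the inclusions above. That order follows the convention that a map is the relation $\{(f(x),x)\}$, and it should be checked carefully in each case. Otherwise every step is a routine unwinding of the definitions.
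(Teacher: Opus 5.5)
Your proof is correct and follows essentially the paper's argument: the same inclusions $\Ad_g E\subset \Xi_{f,g}^\dagger\circ\Ad_f E\circ\Xi_{f,g}$, $g^{-1}(D)\subset f^{-1}(\Xi_{f,g}[D])$ and $T=(F\circ\Xi_{f,g})[g(S)]$, with effective properness and coarse equivalence handled along the lines the paper indicates. One small correction: your caution about $g\circ h$ is unnecessary, since Proposition~\ref{proposition:closecomposition} applies directly with inner maps both equal to $h$ and outer maps $\varphi=g$, $\psi=f$, which are both controlled; your direct inclusion of $\Xi_{g\circ h,f\circ h}$ into $\Xi_{f,g}^\dagger$ is equally valid and does not even need controlledness.
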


\begin{proposition}\label{proposition:products}
    Let $(S,\mathcal{E})$ be a coarse space, 
    $\{(T_\lambda,\mathcal{F}_\lambda)\}_{\lambda \in \Lambda}$ a family of coarse spaces, and 
    $\{f_\lambda : S\to T_\lambda\}_{\lambda\in \Lambda}$ a family of maps. 
    \begin{enumerate}
        \item\label{proposition:products:controlled} Assume that $f_\lambda$ is controlled for each $\lambda\in \Lambda$. Then, the map 
        \[
        (f_\lambda)_{\lambda \in \Lambda}:S\to \Pi_{\lambda \in \Lambda}T_{\lambda}, \, x\mapsto (f_{\lambda}(x))_{\lambda\in \Lambda}
        \]
        is also controlled.
        \item\label{proposition:products:coarse} Assume that  $f_\lambda$ is controlled for each $\lambda\in \Lambda$, and there exists $\lambda_0\in \Lambda$ such that $f_{\lambda_0}$ is coarse.
        Then, the map $(f_\lambda)_{\lambda \in \Lambda}:S\to \Pi_{\lambda \in \Lambda}T_{\lambda}$ is also coarse.
    \end{enumerate}
\end{proposition}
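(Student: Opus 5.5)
The statement has two parts. Part \eqref{proposition:products:controlled} asserts that $(f_\lambda)_{\lambda}$ is controlled, and part \eqref{proposition:products:coarse} adds that it is coarsely proper when some $f_{\lambda_0}$ is coarse. Both follow by unwinding the definition of the product coarse structure $\otimes_{\lambda}\mathcal{F}_\lambda$ together with the identity $\pi_\lambda\circ f = f_\lambda$, where $f:=(f_\lambda)_{\lambda\in\Lambda}$ and $\pi_\lambda$ is the projection onto $T_\lambda$.

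For \eqref{proposition:products:controlled}, take $E\in\mathcal{E}$. An entourage of the product is by definition a relation whose image under $\Ad_{\pi_\lambda}$ lies in $\mathcal{F}_\lambda$ for every $\lambda$. By the composition rule for adjoint operators,
\[
\Ad_{\pi_\lambda}(\Ad_f(E))=\Ad_{\pi_\lambda\circ f}(E)=\Ad_{f_\lambda}(E).
\]
The right-hand side lies in $\mathcal{F}_\lambda$ because $f_\lambda$ is controlled. Hence $\Ad_f(E)\in\otimes_\lambda\mathcal{F}_\lambda$, so $f$ is controlled.

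For \eqref{proposition:products:coarse}, controlledness is already given by \eqref{proposition:products:controlled}, so it remains to show that $f$ is coarsely proper. Let $D\subset\prod_\lambda T_\lambda$ be bounded, that is, $D\times D\in\otimes_\lambda\mathcal{F}_\lambda$. Then
\[
\Ad_{\pi_{\lambda_0}}(D\times D)=\pi_{\lambda_0}(D)\times\pi_{\lambda_0}(D)\in\mathcal{F}_{\lambda_0},
\]
so $\pi_{\lambda_0}(D)$ is bounded in $T_{\lambda_0}$. Since $\pi_{\lambda_0}\circ f=f_{\lambda_0}$, we have
\[
f^{-1}(D)\subset f^{-1}\bigl(\pi_{\lambda_0}^{-1}(\pi_{\lambda_0}(D))\bigr)=f_{\lambda_0}^{-1}(\pi_{\lambda_0}(D)).
\]
The right-hand side is bounded because $f_{\lambda_0}$ is coarsely proper. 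A subset of a bounded set is bounded, so $f^{-1}(D)$ is bounded, and $f$ is coarse.

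There is no real obstacle here. The only point needing care is that "bounded" in the product means $D\times D$ is a product entourage, and that projections of product entourages are entourages. Both facts come directly from the definition of $\otimes_\lambda\mathcal{F}_\lambda$, which in particular makes each $\pi_\lambda$ controlled.
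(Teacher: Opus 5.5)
Your proof is correct and takes essentially the same route as the paper. Part (1) unwinds the product coarse structure via $\Ad_{\pi_\lambda}\circ\Ad_f=\Ad_{f_\lambda}$, which the paper simply calls immediate; part (2) bounds $f^{-1}(D)$ by $f_{\lambda_0}^{-1}$ of the bounded projection $\pi_{\lambda_0}(D)$. The paper's only detour is to first regroup the product as $T_{\lambda_0}\times\prod_{\lambda\neq\lambda_0}T_\lambda$ and write $B\subset B_0\times B_1$, whereas you project onto the $\lambda_0$-coordinate directly, which avoids that implicit regrouping.
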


Finally, let us prove the above three propositions as below.

\begin{proof}[proof of Proposition \ref{proposition:closecomposition}]
    Our goal is to show $\Xi_{\varphi\circ f, \psi\circ g}$ is an entourage on $U$.
    We have
    \begin{align*}
        \Xi_{\varphi\circ f, \psi\circ g}&=(\varphi\circ f)\circ(\psi\circ g)^\dagger\\
        &\subset \varphi\circ (\psi^\dagger \circ \psi)\circ f\circ g^\dagger\circ \psi^\dagger\\
        &=\Xi_{\varphi, \psi}\circ \Ad_{\psi}(\Xi_{f,g}).
    \end{align*}
    By the assumptions  $f\sim g$ and $\varphi\sim \psi$, 
    two sets $\Xi_{f,g}$ and $\Xi_{\varphi, \psi}$ are entourages on $T$ and $U$, respectively.
    Furthermore, $\Ad_{\psi}(\Xi_{f,g})$ is also on $U$ since $\psi$ is controlled.
    Hence the subset 
    $\Xi_{\varphi\circ f, \psi\circ g}\subset \Xi_{\varphi, \psi}\circ \Ad_{\psi}(\Xi_{f,g})$ is also an entourage on $U$.
\end{proof}

\begin{proof}[proof of Proposition \ref{proposition:preserveclose}]
    we may only check that five conditions 
    \eqref{definition:coarsemaps:item:controlled}, 
    \eqref{definition:coarsemaps:item:proper}, 
    \eqref{definition:coarsemaps:item:effectivelyproper}, 
    \eqref{definition:coarsemaps:item:surjective}, 
    and \eqref{definition:coarsemaps:item:coarseeq}
    are preserved under closeness. 
    Firstly, let us check about  \eqref{definition:coarsemaps:item:controlled}.
    Take $E\in \mathcal{E}$.
    Our goal is to show $\Ad_gE\in \mathcal{F}$.
    We have
    \begin{align*}
        \Ad_gE&=g\circ E\circ g^{\dagger}\\
        &\subset g\circ (f^\dagger \circ f) \circ E\circ (f^\dagger\circ f)\circ g ^\dagger\\
        &\subset (f\circ g^\dagger)^\dagger \circ f \circ E\circ f^\dagger\circ (f\circ g ^\dagger)\\
        &=\Xi_{f,g}^\dagger\circ \Ad_{f}E\circ \Xi_{f,g}.
    \end{align*}
    Since $f$ is controlled and $f\sim g$, we obtain $\Xi_{f,g}^\dagger\circ \Ad_{f}E\circ \Xi_{f,g}\in \mathcal{F}$.
    Hence the set $\Ad_gE$ also belongs to $\mathcal{F}$.
    Thus, the map $g$ is controlled.
    By a similar argument, we can prove that condition \eqref{definition:coarsemaps:item:effectivelyproper} is also preserved under closeness.
    It immediately follows from Proposition \ref{proposition:closecomposition} and the above argument that condition \eqref{definition:coarsemaps:item:coarseeq} is preserved under closeness.
    Next, we check about \eqref{definition:coarsemaps:item:proper}.
    Take a bounded set $B\subset T$.
    It suffices to show that $g^\dagger[B]$ is bounded in $S$.
    We have
    \begin{align*}
        g^\dagger[B]&\subset  (f^\dagger\circ f)\circ g^\dagger[B]\\
        &=f^\dagger\circ \Xi_{f,g}[B]
    \end{align*}
    By $\Xi_{f,g}\in \mathcal{F}$, the neighborhood $\Xi_{f,g}[B]$ of the bounded set $B$ is also bounded in $T$.
    Furthermore, since the map $f$ is coarsely proper, $f^\dagger\circ \Xi_{f,g}[B]$ is a bounded set in $S$, and 
    the subset $g^\dagger[B]$ is also.
    Thus, $g$ is also coarsely proper.

    Finally, let us check about \eqref{definition:coarsemaps:item:surjective}.
    Since $f$ is coarsely surjective, there exists $F\in \mathcal{F}$ such that $F[f(S)]=T$.
    Further, the set $\Xi_{f,g}$ belongs to $\mathcal{F}$ by $f\sim g$, and $F\circ \Xi_{f,g}\in \mathcal{F}$. 
    Hence, we obtain
    \[
    F\circ \Xi_{f,g}[g(S)]=F\circ f\circ g^\dagger\circ g[S]
    \supset F\circ f[S]=T.
    \]
    Thus, the equality $F\circ \Xi_{f,g}[g(S)]=T$ holds, and $g$ is coarsely surjective.
\end{proof}

\begin{proof}[proof of Proposition \ref{proposition:products}]
    Assertion \eqref{proposition:products:controlled} follows immediately from the definition of the product coarse structure (cf.~ \cite{DikranjanZava2017}).
    Hence let us show assertion \eqref{proposition:products:coarse}. 
    By assertion \eqref{proposition:products:controlled}, the map
    \[
    g:=(f_\lambda)_{\lambda\in\Lambda\setminus\{\lambda_0\}}
    \colon S\to\prod_{\lambda\in\Lambda\setminus\{\lambda_0\}}T_\lambda
    \]
    is controlled.
    Thus, we may regard
    \[
    (f_\lambda)_{\lambda\in\Lambda}
    \]
    as the product of the coarse map $f_{\lambda_0}$ and the controlled map $g$.
    Hence, it suffices to consider the case $\Lambda=\{0,1\}$, where $f_0$ is coarse and $f_1$ is controlled.
    By assertion \eqref{proposition:products:controlled}, 
    the map $(f_0,f_1)$ is also controlled. 
    Hence we may only check that $(f_0,f_1)$ is coarsely proper.
    Fix a bounded set $B\subset T_0\times T_1$.
    Then there exists bounded sets $B_0\subset T_0$ and $B_1\subset T_1$ such that $B\subset B_0\times B_1$.
    Then, we have
    \[
        (f_0,f_1)^{-1}(B)\subset f_0^{-1}(B_0)\cap f_1^{-1}(B_1)\subset f_0^{-1}(B_0). 
    \]
    Since $f_0$ is coarsely proper, $f_0^{-1}(B_0)$ is bounded in $S$.
    Hence, the subset $(f_0,f_1)^{-1}(B)$ is also bounded in $S$.
    Thus, the map $(f_0,f_1)$ is coarsely proper.
    Therefore, $(f_0,f_1)$ is coarse.
\end{proof}

\subsection{Maps between locally compact Hausdorff groups with the LR-coarse structures}

In this section, we record some basic facts concerning maps between locally compact Hausdorff groups equipped with their LR coarse structures (see Example \ref{example:coarse}). 

\begin{proposition}\label{proposition:grouphom}
    Let $G_1,G_2$ be locally compact Hausdorff groups and $f$ a map from $G_1$ to $G_2$.
    Consider that $G_1$ and $G_2$ are equipped with LR-coarse structure.
    Then the following six assertions hold:
    \begin{enumerate}
        \item\label{proposition:grouphom:item:controlled} Assume that  $f$ is a continuous group-homomorphism. Then $f$ is controlled.
        \item\label{proposition:grouphom:item:controlledanti} Assume that  $f$ is a continuous anti-group-homomorphism. Then $f$ is controlled.
        \item\label{proposition:grouphom:item:coarse} Assume that  $f$ is a topological closed embedding group-homomorphism. Then $f$ is coarse. 
        \item\label{proposition:grouphom:item:anticoarse} Assume that  $f$ is a topological closed embedding anti-group-homomorphism. Then $f$ is coarse. 
        \item\label{proposition:grouphom:item:coarseeq} Assume that  $f$ is an isomorphism of topological groups. Then $f$ is a coarse equivalence. 
        \item\label{proposition:grouphom:item:anticoarseeq} Assume that  $f$ is an anti-isomorphism of topological groups. Then $f$ is a coarse equivalence. 
    \end{enumerate}
    In particular, for a group and its closed subgroup, the inclusion map is coarse.
\end{proposition}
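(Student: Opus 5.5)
We prove the six assertions in order, working throughout with the coarse base $\{E_C^{\mathrm{LR}}\mid C\in\mathcal{K}(G_1)\}$ of $\mathcal{E}^{\mathrm{LR}}$ from Example \ref{example:coarse}. Note that $(x,y)\in E_C^{\mathrm{LR}}$ means $x=c_1yc_2^{-1}$ for some $c_1,c_2\in C$.

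For \eqref{proposition:grouphom:item:controlled}, take such a pair $(x,y)=(c_1yc_2^{-1},y)$. Then $(f(x),f(y))=(f(c_1)f(y)f(c_2)^{-1},f(y))$. Since $f(C)$ is compact by continuity, this pair lies in $E^{\mathrm{LR}}_{f(C)}$, so $\Ad_f(E_C^{\mathrm{LR}})\subset E^{\mathrm{LR}}_{f(C)}$. For \eqref{proposition:grouphom:item:controlledanti}, the identity $f(c_1yc_2^{-1})=f(c_2)^{-1}f(y)f(c_1)$ gives the inclusion $\Ad_f(E_C^{\mathrm{LR}})\subset E^{\mathrm{LR}}_{f(C)\cup f(C)^{-1}}$. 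Since subsets of entourages are entourages, $f$ is controlled in both cases.

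For \eqref{proposition:grouphom:item:coarse} and \eqref{proposition:grouphom:item:anticoarse}, controlledness is already done, so only coarse properness remains. Since $G_1$ and $G_2$ carry the LR-coarse structure, Example \ref{example:coarse} gives that the bounded sets are exactly the relatively compact ones. Given a relatively compact $D\subset G_2$, its closure $\bar D$ is compact. Because $f$ is a closed embedding, $f(G_1)\cap\bar D$ is compact and $f$ is a homeomorphism onto its image. Hence $f^{-1}(D)\subset f^{-1}(f(G_1)\cap\bar D)$ is relatively compact, and therefore bounded. Equivalently, one can argue that a closed embedding is topologically proper and then invoke Remark \ref{remark:btop}.

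For \eqref{proposition:grouphom:item:coarseeq} and \eqref{proposition:grouphom:item:anticoarseeq}, the inverse $h=f^{-1}$ is again a continuous (anti-)homomorphism, so it is controlled by \eqref{proposition:grouphom:item:controlled} or \eqref{proposition:grouphom:item:controlledanti}. Moreover $h\circ f=\id$ and $f\circ h=\id$, and $\Xi_{\id,\id}=\diag$ is an entourage, so $f$ is a coarse equivalence. The final claim about subgroups follows from \eqref{proposition:grouphom:item:coarse}, since the inclusion of a closed subgroup is a closed embedding homomorphism.

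The only delicate point is the anti-homomorphism case, where the roles of the left and right multiplications swap. This works only because we use the two-sided LR-structure with the symmetric base $\mathcal{K}$.
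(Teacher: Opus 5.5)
Your proof is correct and follows the paper's argument. Both use the inclusion $\Ad_f(E_C^{\mathrm{LR}})\subset E^{\mathrm{LR}}_{f(C)}$ (with the inverted compact set in the anti-homomorphism case), obtain coarse properness in (3)--(4) from the fact that a closed embedding pulls relatively compact sets back to relatively compact sets, and get the coarse equivalences from controlledness of $f^{-1}$. The only difference is that where the paper cites Proposition \ref{proposition:closedeqqborno} and Remark \ref{remark:btop}, you verify the needed direction directly.
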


A proof of the above proposition will be given the end of this subsection.

As a corollary of the above proposition, we can obtain the following two properties:

\begin{corollary}\label{corollary:phiwedge}
    For a locally compact Hausdorff group $G$ with LR-coarse structure, the inverse map $g\mapsto g^{-1}$ is a coarse equivalence.
    Furthermore, for $n\in \Z_{\geq 1}$ and $i,j,q\in [n]$ with $i\leq j$, the following holds:
    \begin{enumerate}
        \item $\Phi_{ij}:\rmB(n)\to \rmB(j-i+1), \, (x_{st})_{1\leq s,t,\leq n}\mapsto (x_{st})_{i\leq s,t,\leq j}$ is controlled.
        \item 
        $\bigwedge^q : \GL (n,\R) \to \GL (N,\R), \quad X\mapsto (\det X_{\mathbb{I}_{n,q}(s)\mathbb{I}_{n,q}(t)})_{s,t\in \left[\binom{n}{q}\right]}$ is controlled (see Section \ref{subsection:grouphom} for the notation).
        Here, $N:=\binom{n}{q}$.
    \end{enumerate}
\end{corollary}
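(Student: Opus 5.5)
The plan is to deduce every assertion of Corollary \ref{corollary:phiwedge} directly from Proposition \ref{proposition:grouphom}. No further coarse-geometric input should be needed.

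For the inverse map $\iota\colon G\to G$, $g\mapsto g^{-1}$, I first note that $\iota$ is a homeomorphism, since it is continuous and is its own inverse. It is also an anti-homomorphism, because $(gh)^{-1}=h^{-1}g^{-1}$. It is therefore an anti-isomorphism of topological groups, and assertion \eqref{proposition:grouphom:item:anticoarseeq} of Proposition \ref{proposition:grouphom} shows that it is a coarse equivalence.

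For item (1), Proposition \ref{proposition:phiij} says that $\Phi_{ij}$ is a continuous group homomorphism $\rmB(n)\to\rmB(j-i+1)$, with both groups locally compact Hausdorff (they are closed subgroups of general linear groups, and each is given its own LR-coarse structure). Assertion \eqref{proposition:grouphom:item:controlled} of Proposition \ref{proposition:grouphom} then gives that $\Phi_{ij}$ is controlled. Item (2) follows in the same way: Proposition \ref{proposition:outergrouphom} provides that $\bigwedge^q\colon\GL(n,\R)\to\GL(N,\R)$ is a continuous homomorphism, so assertion \eqref{proposition:grouphom:item:controlled} applies again.

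There is one point to check, and it is the closest thing to an obstacle. Later uses will restrict $\bigwedge^q$ to $\rmB(n)$, or compose it with $\Phi_{ij}$ and with inversion. These restrictions and composites remain controlled for two reasons. The inclusion of a closed subgroup is coarse by the final sentence of Proposition \ref{proposition:grouphom}, and controlledness is preserved under composition by Proposition \ref{proposition:composition}. Beyond this, the argument is purely a matter of citing the earlier results.
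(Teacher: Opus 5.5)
Your proposal is correct and follows the paper's route exactly. The paper obtains this corollary directly from Proposition~\ref{proposition:grouphom}: it uses the anti-isomorphism case for inversion, and the continuous-homomorphism case, together with Propositions~\ref{proposition:phiij} and~\ref{proposition:outergrouphom}, for $\Phi_{ij}$ and $\bigwedge^q$.
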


\begin{corollary}\label{corollary:log}
    Regard $\mathbb{R}_{>0}$ as a group under multiplication, equip it with the LR-coarse structure and 
    $\R$ with the bounded coarse structure. Then the map
    \[
    \log\colon \mathbb{R}_{>0}\longrightarrow \mathbb{R}
    \]
    is coarse equivalence.
\end{corollary}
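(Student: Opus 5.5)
Corollary~\ref{corollary:log} asserts that $\log\colon\R_{>0}\to\R$ is a coarse equivalence, where $\R_{>0}$ carries the LR-coarse structure of the multiplicative group and $\R$ the bounded coarse structure. I would reduce it to Proposition~\ref{proposition:grouphom}\eqref{proposition:grouphom:item:coarseeq} together with Proposition~\ref{proposition:composition}. The map $\log$ is an isomorphism of topological groups from $(\R_{>0},\cdot)$ onto $(\R,+)$. By Proposition~\ref{proposition:grouphom}\eqref{proposition:grouphom:item:coarseeq}, it is therefore a coarse equivalence when both groups are equipped with their LR-coarse structures. It remains to show that the LR-coarse structure on $(\R,+)$ coincides with the bounded coarse structure $\mathcal{E}^d$ for $d(x,y)=|x-y|$. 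The identity map is then a coarse equivalence between the two structures, and composing with it via Proposition~\ref{proposition:composition} gives the claim.

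For the identification of coarse structures, note that $(\R,+)$ is abelian. Hence $\mathcal{E}^{\mathrm{LR}}=\mathcal{E}^{\mathrm{L}}$, as remarked in Example~\ref{example:coarse}, so it suffices to compare $\mathcal{E}^{\mathrm{L}}$ with $\mathcal{E}^d$. I would use the coarse base $\{E^{\mathrm{L}}_C\mid C\in\mathcal{K}(\R)\}$. Every such $C$ is compact, so it lies in $[-r,r]$ for some $r\ge 0$, and then $E^{\mathrm{L}}_C\subset E^{\mathrm{L}}_{[-r,r]}=E_r$. Conversely, $E_r=E^{\mathrm{L}}_{[-r,r]}$ with $[-r,r]\in\mathcal{K}(\R)$. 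The two bases are therefore mutually cofinal, and the generated coarse structures agree.

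There is no real obstacle here. The only point needing care is that the LR-coarse structure on an abelian group reduces to the L-coarse structure. This holds because, for $C\in\mathcal{K}(G)$, one has $E^{\mathrm{LR}}_C=E^{\mathrm{L}}_{CC^{-1}}$, and $CC^{-1}$ is again compact. Alternatively, one can check directly that $\log$ and $\exp$ are controlled. Indeed, $(a,b)\in E^{\mathrm{LR}}_C$ with $C\subset[e^{-r},e^{r}]$ forces $|\log a-\log b|\le 2r$, and conversely. Moreover, $\log\circ\exp$ and $\exp\circ\log$ are identities, so they are trivially close to $\id$.
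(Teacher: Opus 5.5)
Your proposal is correct and follows essentially the same route as the paper. The paper notes that the bounded coarse structure on $\R$ coincides with its LR-coarse structure, and that $\log$ and its inverse $\exp$ are continuous group homomorphisms, hence controlled by Proposition~\ref{proposition:grouphom}; your explicit checks $E^{\mathrm{L}}_{[-r,r]}=E_r$ and $E^{\mathrm{LR}}_C=E^{\mathrm{L}}_{CC^{-1}}$ just fill in the identification the paper states without proof.
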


Note that the bounded coarse structure on $\R$ coincides with the LR-coarse structure on $\R$.
Since the inverse map $\exp\colon \mathbb{R}\longrightarrow \mathbb{R}_{> 0}$ is also a continuous group homomorphism, the above corollary holds.

Finally,let us show Proposition \ref{proposition:grouphom} as below:

\begin{proof}[proof of Proposition \ref{proposition:grouphom}]
    Firstly, we show assertion \eqref{proposition:grouphom:item:controlled}.
    For a compact subset $C\subset G_1$, 
    $\Ad_f E_C^{\text{LR}}\subset E_{f(C)}^{\text{LR}}$ holds.
    In fact, for each $c_1,c_2\in C$ and $x\in G_1$, 
    we can obtain $f(c_1xc_2^{-1})=f(c_1)f(x)f(c_2)^{-1}$ since $f$ is a group homomorphism.
    By the assumption that $f$ is continuous, $f(C)$ is bounded in $(G_2,\mathcal{E}^\text{LR})$.
    Hence, $f$ is controlled.
    By the similar argument, we can check \eqref{proposition:grouphom:item:controlledanti}.
    
    By assertions \eqref{proposition:grouphom:item:controlled}, \eqref{proposition:grouphom:item:controlledanti} and Proposition \ref{proposition:closedeqqborno}, we can obtain \eqref{proposition:grouphom:item:coarse} and \eqref{proposition:grouphom:item:anticoarse} (see also Remark~\ref{remark:btop}).
    Furthermore, two assertions \eqref{proposition:grouphom:item:coarseeq} and \eqref{proposition:grouphom:item:anticoarseeq} follow from \eqref{proposition:grouphom:item:controlled}, \eqref{proposition:grouphom:item:controlledanti} and the definition of the coarse equivalence.
\end{proof}

\subsection{Asymptotic disjointness}\label{subsection:arandad}

Next, we recall the notion of ``separation'' in the sense of coarse geometry and some properties of this.
Let $(S,\mathcal{E})$ be a coarse space.
we define ``separation'' as below:

\begin{definition}\label{definition:asymptoticallydisjoint}[cf.~\cite{BanakhProtasov2018}]
    Let $A$ and $D$ be subsets of $S$.
    The pair $(A,D)$ is called an \emph{asymptotically disjoint pair} (denoted by $A\pitchfork_\mathcal{E} D$) if any (equivalently, all) of the following equivalent conditions hold:
    \begin{enumerate}
        \item the set $A\cap E[D]$ is bounded in $S$ for any $E\in \mathcal{E}$.
        \item the set $E[A]\cap D$ is bounded in $S$ for any $E\in \mathcal{E}$.
        \item the set $E[A]\cap E[D]$ is bounded in $S$ for any $E\in \mathcal{E}$.
        \item the set $E[A]\cap F[D]$ is bounded in $S$ for any $E,F\in \mathcal{E}$.
    \end{enumerate}
\end{definition}

The following example relates asymptotic disjointness to properness of natural group actions.

\begin{example}\label{example:properad}
    Let $G$ be locally compact Hausdorff group, and $H,L$ be closed subgroups of $G$.
    By Proposition \ref{proposition:properbyside}, the following four conditions are equivalent:
    \begin{enumerate}
        \item The pair $(L,H)$ is asymptotically disjoint in $(G,\mathcal{E}^\text{LR})$ (see Example \ref{example:coarse} for the definition of $\mathcal{E}^\text{LR}$ ).
        \item The natural $L$-action on the homogeneous space $G/H$ is proper.
        \item The natural $H$-action on the homogeneous space $G/L$ is proper.
        \item The diagonal $G$-action on the space $G/H\times G/L$ is proper.
    \end{enumerate}
\end{example}

The following proposition describes how asymptotic disjointness behaves under coarse maps and coarse embeddings.

\begin{proposition}\label{proposition:ad}
    Let $(S,\mathcal{E}),(T,\mathcal{F})$ be coarse spaces and 
    $f$ a map from $S$ to $T$.
    \begin{enumerate}
        \item\label{proposition:ad:item:coarse} If $f$ is coarse, the map $f$ reflects asymptotic disjointness.
        That is, for $A,D\subset S$, if $f(A)\pitchfork_{\mathcal{F}}f(D)$, then $A\pitchfork_\mathcal{E} D$.
        \item\label{proposition:ad:item:emb} If $f$ is a coarse embedding, the map $f$  both preserves and reflects  asymptotic disjointness.
        That is, for $A,D\subset S$, $A\pitchfork_\mathcal{E} D$ if and only if $f(A)\pitchfork_{\mathcal{F}} f(D)$. 
    \end{enumerate}
\end{proposition}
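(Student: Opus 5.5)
We prove the two assertions in order, working directly from the definitions of coarse maps (Definition \ref{definition:coarsemaps}) and asymptotic disjointness (Definition \ref{definition:asymptoticallydisjoint}). Throughout we use condition (1) of the latter: $A\pitchfork_\mathcal{E} D$ means that $A\cap E[D]$ is bounded for every $E\in\mathcal{E}$. The basic tool is the elementary inclusion
\[
f\bigl(A\cap E[D]\bigr)\subset f(A)\cap \Ad_f(E)[f(D)],
\]
valid for any map $f$ and any $E\subset S\times S$. It holds because if $a\in A$ and $(a,d)\in E$ with $d\in D$, then $(f(a),f(d))\in \Ad_f(E)$.

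For assertion (1), suppose $f$ is coarse and $f(A)\pitchfork_\mathcal{F} f(D)$, and fix $E\in\mathcal{E}$.
\begin{itemize}
\item Since $f$ is controlled, $\Ad_f(E)\in\mathcal{F}$.
\item By hypothesis, $B:=f(A)\cap \Ad_f(E)[f(D)]$ is therefore bounded in $T$.
\item By the inclusion above, $A\cap E[D]\subset f^{-1}(B)=f^\dagger[B]$.
\item Since $f$ is coarsely proper, $f^\dagger[B]$ is bounded.
\end{itemize}
Hence $A\cap E[D]$ is bounded, being a subset of a bounded set. This gives $A\pitchfork_\mathcal{E} D$.

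For assertion (2), the reflection half follows from (1). A coarse embedding is controlled and effectively proper, and effectively proper maps are coarsely proper, as noted after Definition \ref{definition:coarsemaps}. So a coarse embedding is coarse and (1) applies.

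For preservation, suppose $A\pitchfork_\mathcal{E} D$ and fix $F\in\mathcal{F}$. Put $E:=\Ad_{f^\dagger}F=\{(p,q)\mid (f(p),f(q))\in F\}$, which lies in $\mathcal{E}$ because $f$ is effectively proper. We claim that
\[
f(A)\cap F[f(D)]\subset f\bigl(A\cap E[D]\bigr).
\]
To see this, take $y=f(a)$ with $a\in A$, and suppose $(f(a),f(d))\in F$ for some $d\in D$. Then $(a,d)\in E$, so $a\in A\cap E[D]$. The set $A\cap E[D]$ is bounded by hypothesis, and controlled maps send bounded sets to bounded sets, so its image under $f$ is bounded. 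Therefore $f(A)\cap F[f(D)]$ is bounded, which proves $f(A)\pitchfork_\mathcal{F} f(D)$.

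The main point requiring care is this last step. The preimage must be taken through $\Ad_{f^\dagger}$ rather than through $f^{-1}$ of the neighborhood, so that the point $a$ itself, and not merely some preimage of $f(a)$, lies in $A\cap E[D]$. Choosing $a\in A$ from the outset avoids any issue with non-injectivity of $f$. The rest of the argument is routine bookkeeping with the relation calculus of Subsection \ref{subsection:notation}.
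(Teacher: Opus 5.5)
Your proof is correct and follows the paper's argument. Assertion (1) is the same argument: push forward by $\Ad_f(E)$, then pull back using coarse properness. In (2), reflection comes from (1), and preservation pulls $F$ back via $\Ad_{f^\dagger}$ and pushes the bounded intersection forward by controlledness. The only difference is small. The paper takes the full preimage $f^\dagger[f(A)\cap F[f(D)]]$, so it must use the saturation $f^\dagger\circ f[A]=\Ad_{f^\dagger}(\diag(T))[A]$ and condition (4) of Definition \ref{definition:asymptoticallydisjoint}. Your choice of $a\in A$ with $y=f(a)$ lets you use condition (1) with the single entourage $\Ad_{f^\dagger}F$.
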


The above proposition immediately follows from Theorem 7.18 in \cite{MiyajiNagayaOgawaOkuda2026}. 
However, for the reader's convenience, we give a proof in the end of this subsection.

As a corollary of the above theorem, we can obtain the following (see also Example \ref{example:properad}):

\begin{corollary}\label{corollary:coarsemapproper}
    Let $G$ be a locally compact Hausdorff group with the LR-coarse structure, $L,H$ closed subgroups of $G$, $(S,\mathcal{E})$ a coarse space, and $\mu:G\to S$ a coarse map.
    If the pair $(\mu(L),\mu(H))$ are asymptotically disjoint, then the natural $L$-action on $G/H$ is proper.
    Furthermore, if $\mu$ is a coarse embedding, the converse claim holds.
\end{corollary}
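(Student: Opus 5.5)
The plan is to combine Proposition \ref{proposition:ad}\eqref{proposition:ad:item:coarse} with Example \ref{example:properad}. The only point is to identify the subsets $\mu(L)$ and $\mu(H)$ of $S$ with subsets of $G$ via $\mu$.

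\textbf{Step 1.} By Example \ref{example:properad}, which rests on Proposition \ref{proposition:properbyside}, properness of the natural $L$-action on $G/H$ is equivalent to $L\pitchfork_{\mathcal{E}^\mathrm{LR}} H$ in $(G,\mathcal{E}^\mathrm{LR})$. So it suffices to show that $\mu(L)\pitchfork_{\mathcal{E}}\mu(H)$ implies $L\pitchfork_{\mathcal{E}^\mathrm{LR}}H$. This is exactly the reflection statement of Proposition \ref{proposition:ad}\eqref{proposition:ad:item:coarse} applied with $A=L$, $D=H$ and $f=\mu$.

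\textbf{Step 2 (proof of reflection, if written out).} Fix $E\in\mathcal{E}^\mathrm{LR}$. Since $\mu$ is controlled, $\Ad_\mu E\in\mathcal{E}$. We have
\[
\mu(L\cap E[H])\subset \mu(L)\cap \Ad_\mu(E)[\mu(H)],
\]
and the right-hand side is bounded in $S$ by the hypothesis and condition (1) of Definition \ref{definition:asymptoticallydisjoint}. Coarse properness of $\mu$ then makes $\mu^{-1}$ of this bounded set bounded, and it contains $L\cap E[H]$. Hence $L\cap E[H]$ is bounded, as required.

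One subtlety is that ``bounded'' in the definition means $B\times B\in\mathcal{E}$. Coarse properness is phrased with bounded sets, and the bornology $\mathcal{B}_\mathcal{E}$ consists of finite unions of bounded sets. Since $(G,\mathcal{E}^\mathrm{LR})$ is coarsely connected (the LR-action is transitive), the bounded sets there are exactly the relatively compact ones. So no finite-union issue arises on the $G$ side, and on the $S$ side we only use that a subset of a bounded set is bounded.

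\textbf{Step 3 (converse).} If $\mu$ is a coarse embedding, Proposition \ref{proposition:ad}\eqref{proposition:ad:item:emb} makes asymptotic disjointness preserved as well as reflected. Properness gives $L\pitchfork H$ by Example \ref{example:properad}, hence $\mu(L)\pitchfork\mu(H)$. For the preservation direction we use effective properness: for $F\in\mathcal{E}$, the set $\mu(L)\cap F[\mu(H)]$ equals $\mu\bigl(L\cap \Ad_{\mu^\dagger}(F)[H]\bigr)$. This holds because a point $\mu(\ell)$ with $(\mu(\ell),\mu(h))\in F$ gives $(\ell,h)\in\Ad_{\mu^\dagger}F$. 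The set $L\cap \Ad_{\mu^\dagger}(F)[H]$ is bounded, and a controlled map sends bounded sets to bounded sets, so $\mu(L)\cap F[\mu(H)]$ is bounded.

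No step is a real obstacle. The only care needed is the bookkeeping just described: checking that the image of a neighborhood lies in the neighborhood of the image, and that bounded versus bornological sets agree here.
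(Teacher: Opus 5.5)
Your proposal is correct and follows the paper's own route: the corollary is obtained by combining Example~\ref{example:properad} (properness of the $L$-action is equivalent to $L\pitchfork_{\mathcal{E}^\mathrm{LR}}H$) with the reflection and preservation statements of Proposition~\ref{proposition:ad}, and your Steps 2 and 3 reproduce the proof of that proposition. In Step 3, your identity $\mu(L)\cap F[\mu(H)]=\mu\bigl(L\cap \Ad_{\mu^\dagger}(F)[H]\bigr)$ is slightly tighter than the paper's argument, which passes through the saturation $\Ad_{\mu^\dagger}(\diag(S))[L]$ of $L$ rather than $L$ itself.
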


Finally, we shall check Proposition \ref{proposition:ad}.

\begin{proof}[proof of Proposition \ref{proposition:ad}]
    We first show Assertion \eqref{proposition:ad:item:coarse}.
    Fix two subsets $A,D\subset S$, and assume $f(A)\pitchfork_\mathcal{F} f(D)$.
    Our goal is to show $A\pitchfork_\mathcal{E} D$. 
    Take $E\in \mathcal{E}$.
    Then, we have:
    \begin{align*}        
        A\cap E[D] &\subset f^{\dagger}\circ f [A\cap E[D]] \\
        &\subset f^{\dagger}[f(A)\cap f(E[D])]\\
        &\subset f^{\dagger}[f(A)\cap f\circ E\circ f^{\dagger}\circ f[D])]\\
        &= f^{\dagger}[f(A)\cap \Ad_f(E)[f(D)]].
    \end{align*}
    Since $f$ is controlled, $\Ad_f(E)$ belongs to $\mathcal{F}$.
    By the assumption $f(A)\pitchfork_\mathcal{F} f(D)$, 
    the intersection
    $f(A)\cap \Ad_f(E)[f(D)]$ is bounded in $T$.
    Hence $f^{\dagger}[f(A)\cap \Ad_f(E)[f(D)]]$ is bounded in $S$ since $f$ is coarse.
    Thus, the set $A\cap E[D]$ is also bounded in $S$, and we obtain $A\pitchfork_\mathcal{E} D$.

    Next, let us show Assertion \eqref{proposition:ad:item:emb}.
    Fix an asymptotically disjoint pair $(A,D)$ in $S$.
    It suffices to show $f(A)\pitchfork_\mathcal{F} f(D)$.
    Take $F\in \mathcal{F}$.
    Then, the following inclusions hold:
    \begin{align*}
        f(A)\cap F[f(D)]&= f\circ f^\dagger[f(A)\cap F[f(D)]] \quad (\text{since } f(A)\cap F[f(D)] \text{ is a subset of }\text{Im}f)\\
        &\subset f[f^\dagger\circ f[A]\cap   f^\dagger \circ F\circ f[D]]\\
        &\subset f[\Ad_{f^\dagger} (\diag(T))[A]\cap \Ad_{f^\dagger}(F)[D]].
    \end{align*}
    Since $f$ is effectively proper, two sets $\Ad_{f^\dagger} (\diag(T))$ and $\Ad_{f^\dagger}(F)$ belong in $\mathcal{E}$.
    By the assumption $A\pitchfork_\mathcal{E} D$,
    The intersection $\Ad_{f^\dagger} (\diag(T))[A]\cap \Ad_{f^\dagger}(F)[D]$ is bounded in $S$.
    Hence the image $f[\Ad_{f^\dagger} (\diag(T))[A]\cap \Ad_{f^\dagger}(F)[D]]$ is bounded in $T$ since $f$ is controlled.
    Thus, the set $f(A)\cap F[f(D)]$ is also bounded in $T$.
\end{proof}

\section{Proper Actions on the homogeneous space $\rmB(n)/H$}\label{section:properaction}

In this section, we establish the main results of this paper concerning criteria for proper actions in the point of view of coarse geometry.

\subsection{Coarse Maps and Proper Actions on the homogeneous space $\rmB(n)/H$}\label{subsection:coarsemapproper}

In this section, we construct several coarse maps from $\rmB(n)$ and derive sufficient conditions for properness of appropriate actions from their coarse properness.

Throughout this paper, we fix the following notation:

\begin{itemize}
    \item To simplify notation, for any $n\in\mathbb{N}$ and any subset $S\subset \text{M}(n,\R)$, we use the following same notation:
    \[
    \|-\|:S\to\R_{\geq 0},\quad  X:=(x_{ij})_{1\leq i,j\leq n}\mapsto \|X\|:=\max\{|x_{ij}|\mid 1\leq i,j\leq n\}.
    \]
    \item For $n\in \Z_{\geq 1}$, 
    we define the following maps:
    \begin{align*}
        \Lambda_0(n)&:=\{(i,j,q,\varepsilon)\in [n]^3\times \{\pm 1\}\mid q=\varepsilon=1\text{ and } \left(i=j \text{ or }(i,j)=(1,n)\right)\}, \\
        \Lambda_1(n)&:=\{(i,j,q,\varepsilon)\in [n]^3\times \{\pm 1\}\mid i\leq j,1\leq q \leq j-i+1\}.
    \end{align*}
    Further, for a nonempty subset $\Lambda\subset \Lambda_1(n)$, put \[
        \mu_\Lambda : \rmB(n)\to \R^{ \Lambda}, \quad X:=(x_{ij})_{i,j\in [n]}\mapsto (\log \|\bigwedge^q\Phi_{ij}(X^\varepsilon)\|)_{(i,j,q,\varepsilon)\in \Lambda}.
        \]
    In particular, let us denote $\mu_{\Lambda_0}$ by $\mu_0$, and $\mu_{\{\lambda\}}$ by $\mu_\lambda$ for each $\lambda\in \Lambda_1(n)$.
\end{itemize}

Then, we introduce the following two Theorems:

\begin{theorem}\label{theorem:mu0}
    Fix $n\in \Z_{\geq 1}$.
    Then the following holds:
    \begin{enumerate}
        \item\label{theorem:mu0:item:controlled} For each $\lambda\in \Lambda_1(n)$, the map $\mu_{\lambda}$ is controlled.
        Here, $\rmB(n)$ is equipped with the LR-coarse structure and $\R^{\{\lambda\}}\cong \R$ is equipped with the bounded coarse structure (see Example \ref{example:coarse}).
        \item\label{theorem:mu0:item:coarse} The map $\mu_0$ is coarse.
    \end{enumerate}
\end{theorem}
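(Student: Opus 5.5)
Part (1) comes from factoring $\mu_\lambda$ into maps we already know are controlled. For $\lambda=(i,j,q,\varepsilon)$ we write
\[
\mu_\lambda=\log\circ\|\cdot\|\circ\textstyle\bigwedge^q\circ\Phi_{ij}\circ\iota_\varepsilon ,
\]
where $\iota_\varepsilon(X)=X^\varepsilon$. The first three maps $\iota_\varepsilon$, $\Phi_{ij}$ and $\bigwedge^q$ (the last restricted to $\rmB(j-i+1)\subset\GL(j-i+1,\R)$) are controlled for the LR-coarse structures by Corollary~\ref{corollary:phiwedge} and Proposition~\ref{proposition:grouphom}. By Proposition~\ref{proposition:composition} it therefore suffices to show the following: for any $m$, the function $\log\|\cdot\|:\GL(m,\R)\to\R$ is controlled with respect to $\mathcal{E}^{\mathrm{LR}}$.

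Fix $C\in\mathcal{K}(\GL(m,\R))$ and put $c:=\max_{g\in C}\|g\|<\infty$. For $g,h\in C$ and $Y\in\GL(m,\R)$ the entrywise bound $\|AB\|\le m\|A\|\|B\|$ gives
\[
\|gYh^{-1}\|\le m^2c^2\|Y\| .
\]
Because $C=C^{-1}$, the same inequality holds with the roles of $Y$ and $gYh^{-1}$ exchanged. Hence $|\log\|gYh^{-1}\|-\log\|Y\||\le \log(m^2c^2)$ on $E_C^{\mathrm{LR}}$, and Example~\ref{example:controlledtoR} shows $\log\|\cdot\|$ is controlled. Together with the factorisation above, this proves (1).

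For (2), by (1) and Proposition~\ref{proposition:products}\eqref{proposition:products:controlled}, $\mu_0$ is controlled, so only coarse properness remains. The bornology induced by $\mathcal{E}^{\mathrm{LR}}$ on $\rmB(n)$ is the compact bornology (Example~\ref{example:coarse}). So we must show: if $\mu_0(X)$ lies in a bounded set of $\R^{\Lambda_0(n)}$, then $X$ lies in a compact subset of $\rmB(n)$. The coordinates of $\mu_0$ are
\[
\log|x_{ii}|\ (i\in[n]),\qquad \log\|X\|\ (\text{from }(i,j)=(1,n)).
\]
Boundedness of these gives two facts:
\begin{itemize}
\item each diagonal entry satisfies $a\le|x_{ii}|\le b$ for some constants $0<a\le b$;
\item all entries satisfy $|x_{st}|\le b'$.
\end{itemize}
The set of upper triangular matrices satisfying both is closed and bounded in $\mathrm{M}(n,\R)$. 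It lies inside $\rmB(n)$, since $|\det X|=\prod_i|x_{ii}|\ge a^n>0$. Hence it is compact in $\rmB(n)$, which gives coarse properness.

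The only step needing care is the key check in part (1): that $\log\|\cdot\|$ is controlled. Its uniform two-sided bound depends on using symmetric compact sets. A secondary point is that $\bigwedge^q\Phi_{ij}(X^\varepsilon)$ must stay invertible, so that the logarithm is finite; this holds because $\bigwedge^q$ is a group homomorphism into $\GL(N,\R)$.
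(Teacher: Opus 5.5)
Your proposal is correct and follows essentially the same route as the paper. Part (1) factors $\mu_\lambda$ through the controlled maps of Corollary~\ref{corollary:phiwedge} and uses submultiplicativity of $\|\cdot\|$ on symmetric compact sets; you simply inline the argument of Lemma~\ref{lemma:submultiplicative} and Corollary~\ref{corollary:log} rather than citing them. Part (2) traps $\mu_0^{-1}$ of a bounded set inside the same explicit compact set $\{e^{-R}\le|x_{ii}|\le e^{R},\ |x_{st}|\le e^{R}\}\subset\rmB(n)$.
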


\begin{theorem}\label{theorem:maintheoremimage}
    Fix $n\in \Z_{\geq 1}$ and a set $\Lambda$ with $\Lambda_0(n)\subset \Lambda\subset \Lambda_1(n)$. 
    Then $\mu_\Lambda : \rmB(n)\to \R^{ \Lambda}$ is coarse.
    Here, $\rmB(n)$ is equipped with the LR-coarse structure and $\R^{ \Lambda}$ is equipped with the bounded coarse structure.
    In particular, for closed subgroups $L,H$ of $\rmB(n)$, 
    if the pair $(\mu_\Lambda(L), \mu_\Lambda(H))$ is asymptotically disjoint in $(\R^{ \Lambda},\mathcal{E}_d)$, 
    the natural $L$-action on $\rmB(n)/H$ is proper (see Corollary \ref{corollary:coarsemapproper}). 
\end{theorem}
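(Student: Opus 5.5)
The plan is to reduce everything to Theorem~\ref{theorem:mu0} together with Proposition~\ref{proposition:products}. Since $\Lambda_0(n)\subset\Lambda$, we may split
\[
\R^{\Lambda}\cong\R^{\Lambda_0(n)}\times\R^{\Lambda\setminus\Lambda_0(n)} .
\]
Under this identification $\mu_\Lambda$ becomes the tuple $(\mu_0,(\mu_\lambda)_{\lambda\in\Lambda\setminus\Lambda_0(n)})$. Up to a reordering of coordinates, $\mu_\Lambda$ is simply the product map $(\mu_\lambda)_{\lambda\in\Lambda}$. Here the bounded coarse structure on $\R^\Lambda$ coincides with the product of the bounded coarse structures on the factors, by the example following the definition of product coarse structures.

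By Theorem~\ref{theorem:mu0}\eqref{theorem:mu0:item:controlled}, each $\mu_\lambda$ with $\lambda\in\Lambda$ is controlled. By Theorem~\ref{theorem:mu0}\eqref{theorem:mu0:item:coarse}, $\mu_0$ is coarse. Proposition~\ref{proposition:products}\eqref{proposition:products:coarse} is applied with index set $\{0\}\sqcup(\Lambda\setminus\Lambda_0(n))$: the distinguished component is $f_0=\mu_0$, with target $\R^{\Lambda_0(n)}$, and the remaining components are the controlled maps $\mu_\lambda$. It then gives that $\mu_\Lambda$ is coarse. Controlledness of $\mu_0$ itself also follows from part \eqref{proposition:products:controlled} of that proposition, so nothing beyond Theorem~\ref{theorem:mu0} is needed.

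The ``in particular'' statement then follows directly from Corollary~\ref{corollary:coarsemapproper}, applied with $G=\rmB(n)$ (carrying the LR-coarse structure), $S=\R^\Lambda$ and $\mu=\mu_\Lambda$. This is the only place where the closedness of $L$ and $H$ is used, through Example~\ref{example:properad}.

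There is essentially no obstacle here once Theorem~\ref{theorem:mu0} is granted. The genuine difficulty of this part of the paper sits inside Theorem~\ref{theorem:mu0}, in particular in the coarse properness of $\mu_0$: one must show that bounding $\log|x_{ii}|$ and $\log\|X\|$ forces $X$ into a compact subset of $\rmB(n)$. Presumably this holds because bounded diagonal entries away from $0$, together with bounded entries, yield compactness in $\rmB(n)$ (indeed $\Lambda_0$ contains the pairs $(i,i,1,1)$, which control $|x_{ii}|$ from both sides since the logarithm is bounded, and $(1,n,1,1)$, which controls $\|X\|$). The only routine point to check in the present step is the identification of the product coarse structure with the bounded one on $\R^\Lambda$, which is the cited $d_\infty$ comparison.
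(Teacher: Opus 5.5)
Your proposal is correct and follows the paper's own argument. The paper likewise deduces the theorem directly from Theorem \ref{theorem:mu0} and Proposition \ref{proposition:products}\eqref{proposition:products:coarse}, and gets the properness statement from Corollary \ref{corollary:coarsemapproper}; grouping the $\Lambda_0(n)$-coordinates into the single coarse component $\mu_0$ just makes explicit what the paper's ``immediately follows'' means.
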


To prove the above two theorems, we prepare the following two lemmas:

\begin{lemma}\label{lemma:submultiplicative} 
    Let $G$ be a locally compact Hausdorff group. 
    Equip both $G$ and $\R_{>0}$ with their compact bornologies (see Example \ref{example:bornology}) and their respective LR-coarse structures (see Example \ref{example:coarse}). 
    Let $f:G\to \R_{>0}$ be a bornological map. 
    Assume that there exists $a\geq 1$ such that \[ f(xy)\leq a f(x)f(y) \] for all $x,y\in G$. Then $f$ is controlled. 
    In particular, every continuous map $f:G\to\R_{>0}$ satisfying the above inequality is controlled. 
\end{lemma}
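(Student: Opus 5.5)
To show that $f$ is controlled, it suffices to check the entourages of the base $\{E_C^{\mathrm{LR}}\mid C\in\mathcal{K}(G)\}$ of $\mathcal{E}^{\mathrm{LR}}$ from Example~\ref{example:coarse}. Since $\R_{>0}$ is abelian, its LR-coarse structure coincides with its L-coarse structure. By Corollary~\ref{corollary:log}, a subset of $\R_{>0}\times\R_{>0}$ is an entourage exactly when $\log s-\log t$ is bounded over its pairs $(s,t)$. So the goal is: for each $C\in\mathcal{K}(G)$ there is $M\geq 0$ with
\[
|\log f(c_1xc_2^{-1})-\log f(x)|\leq M\quad\text{for all } x\in G,\ c_1,c_2\in C.
\]
By the description of $\Ad_f(E)$ in Subsection~\ref{subsection:notation}, this says precisely that $\Ad_f(E_C^{\mathrm{LR}})$ is an entourage of $\R_{>0}$.

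First, since $f$ is bornological and $C$ is compact, $f(C)$ is relatively compact in $\R_{>0}$. Hence there is $K\geq 1$ with $K^{-1}\leq f(c)\leq K$ for all $c\in C$; the lower bound uses that the closure of $f(C)$ is a compact subset of $(0,\infty)$.

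Second, the upper estimate: applying the submultiplicativity twice gives
\[
f(c_1xc_2^{-1})\leq a f(c_1)f(xc_2^{-1})\leq a^2 f(c_1)f(x)f(c_2^{-1})\leq a^2K^2 f(x),
\]
using $c_2^{-1}\in C$ because $C=C^{-1}$.

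Third, the reverse estimate: write $x=c_1^{-1}(c_1xc_2^{-1})c_2$ and apply the same inequality with $c_1^{-1},c_2\in C$ to get $f(x)\leq a^2K^2f(c_1xc_2^{-1})$. Taking logarithms of both estimates yields the bound with $M=2\log(aK)$.

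The "in particular" statement then follows from Remark~\ref{remark:btop}: a continuous map between Hausdorff spaces with compact bornologies is bornological.

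The only point needing care is the lower bound $K^{-1}$ on $f(C)$. It relies on relative compactness taking place inside $\R_{>0}$ rather than in $\R$, and on using the symmetric base $\mathcal{K}(G)$ so that inverses of elements of $C$ stay in $C$.
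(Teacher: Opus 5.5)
Your proof is correct and follows essentially the same route as the paper. You fix $C\in\mathcal{K}(G)$, bound $f(C)$ inside $[R^{-1},R]$ using bornologicity, and apply the submultiplicative inequality twice in each direction, with $x=c_1^{-1}(c_1xc_2^{-1})c_2$ for the reverse bound. The only cosmetic difference is that the paper skips logarithms: it concludes directly that $\Ad_f E_C^{\mathrm{LR}}\subset E_K^{\mathrm{L}}$ with $K=[a^{-2}R^{-2},a^2R^2]$, and then uses $\mathcal{E}^{\mathrm{L}}=\mathcal{E}^{\mathrm{LR}}$ on the abelian group $\R_{>0}$.
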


\begin{lemma}\label{lemma:logsubmultiplicative}
    Let $G$ be a locally compact Hausdorff group and $f$ a bornological map from $G$ to $\R_{> 0}$.
    Assume that there exists $a\geq 1$ such that, for any $x,y\in G$, $f(xy)\leq af(x)f(y)$.
    Then, the map $\log\circ f : G\to \R$ is controlled.
    Here, $G$ is equipped with the LR-coarse structure and $\R$ is equipped with the bounded coarse structure.
\end{lemma}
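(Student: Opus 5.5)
The plan is to verify the criterion in Example \ref{example:controlledtoR} directly. The statement to prove is Lemma \ref{lemma:logsubmultiplicative}: the function $\log\circ f$ is controlled from $(G,\mathcal{E}^{\mathrm{LR}})$ to $\R$ with its bounded coarse structure. Since $\{E_C^{\mathrm{LR}}\mid C\in\mathcal{K}(G)\}$ is a coarse base of $\mathcal{E}^{\mathrm{LR}}$, it suffices to show that for each $C\in\mathcal{K}(G)$,
\[
\sup_{(x,y)\in E_C^{\mathrm{LR}}}|\log f(x)-\log f(y)|<\infty .
\]
A pair $(x,y)$ lies in $E_C^{\mathrm{LR}}$ exactly when $x=c_1yc_2^{-1}$ for some $c_1,c_2\in C$.

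First I use the bornological hypothesis. Since $C$ is compact, $C\in\mathcal{B}_{\mathrm{cpt}}(G)$, so $f(C)$ is relatively compact in $\R_{>0}$. Hence there are constants $0<m\le M<\infty$ with $m\le f(c)\le M$ for all $c\in C$. Positivity of the lower bound matters here: relative compactness in $\R_{>0}$, rather than in $\R$, keeps $f(C)$ away from $0$. This is the only point where bornologicity enters, and it is the subtle step, since a bound in $\R$ alone would not suffice.

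Next I apply the submultiplicative inequality twice. This gives
\[
f(c_1yc_2^{-1})\le a^2 f(c_1)f(y)f(c_2^{-1})\le a^2M^2 f(y),
\]
using $c_2^{-1}\in C$ because $C=C^{-1}$. For the reverse direction, write $y=c_1^{-1}xc_2$ with $c_1^{-1},c_2\in C$. The same argument gives $f(y)\le a^2M^2 f(x)$.

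Taking logarithms yields
\[
|\log f(x)-\log f(y)|\le 2\log a+2\log M
\]
whenever $(x,y)\in E_C^{\mathrm{LR}}$, which is the required finite supremum. Note that only the upper bound $M$ is needed, so the lower bound $m$ is in fact not used. The key point is simply that $f$ maps compact sets into bounded subsets of $\R_{>0}$, which gives finiteness of $M$.

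For a continuous $f$, the bornological hypothesis holds automatically by Remark \ref{remark:btop}. Alternatively, one could deduce the result from Lemma \ref{lemma:submultiplicative} composed with Corollary \ref{corollary:log} and Proposition \ref{proposition:composition}, since a coarse equivalence is controlled. The direct estimate above avoids relying on that lemma.
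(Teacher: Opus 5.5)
Your proof is correct and is essentially the paper's argument: the paper proves Lemma \ref{lemma:submultiplicative} with the same double application of submultiplicativity to $x=c_1yc_2^{-1}$, obtaining $f(x)/f(y)\in[a^{-2}R^{-2},a^2R^2]$, and then composes with the coarse equivalence $\log$ of Corollary \ref{corollary:log}, whereas you take logarithms directly and use the criterion of Example \ref{example:controlledtoR}. One small correction to your commentary: as you later note yourself, only the upper bound $M$ on $f(C)$ is used, so your earlier claim that an upper bound in $\R$ alone would not suffice is inaccurate (and a positive lower bound follows automatically from $f(e)\le af(c)f(c^{-1})$).
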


Lemma \ref{lemma:logsubmultiplicative} immediately follows from Lemma \ref{lemma:submultiplicative} and Corollary \ref{corollary:log}.
Hence we only prove  Lemma \ref{lemma:submultiplicative} as below:

\begin{proof}[proof of Lemma \ref{lemma:submultiplicative}]
    Take $a\geq 1$ such that, for any $x,y\in G$, $f(xy)\leq af(x)f(y)$.
    We fix a compact subset $C\in \mathcal{K}(G)$ (The notation $\mathcal{K}(G)$ is fixed in Example \ref{example:coarse}).
    There exists $R\geq 1$ such that 
    \[
    f(C)=f(C^{-1})\subset \left[ \frac{1}{R},R \right].
    \]
    Put $K:=[\frac{1}{a^2R^2},a^2R^2]$
    Take a pair $(x,y)\in E_C^\text{LR}$ and choose $c_1,c_2\in C$ with $x=c_1yc_2^{-1}$.  
    Then we have
    \begin{align*}
    \frac{1}{a^2R^2}\leq \frac{f(x)}{a^2f(c_1^{-1})f(x)f(c_2)}\leq \frac{f(x)}{f(c_1^{-1}xc_2)}&=\frac{f(x)}{f(y)}\\
    &=\frac{f(c_1yc_2^{-1})}{f(y)}\leq \frac{a^2f(c_1)f(y)f(c_2)}{f(y)}\leq a^2R^2.
    \end{align*}
    Hence we can obtain $\frac{1}{a^2R^2}f(y)\leq f(x)\leq a^2R^2f(y)$.
    Thus, $\Ad_fE_C^\text{LR}$ is a subset of $E_{K}^{\text{L}}\in  \mathcal{E}^\text{L}$.
    Since $\R_{>0}$ is abelian, $\mathcal{E}^\text{L}$ coincides with $\mathcal{E}^\text{LR}$.
    Therefore, the map $f: G\to \R_{>0}$ is controlled.
\end{proof}

Before proving two Theorems \ref{theorem:mu0} and \ref{theorem:maintheoremimage}, we illustrate how Lemma \ref{lemma:logsubmultiplicative} applies in concrete examples as below:

\begin{example}\label{example:submultiplicative}
    By Lemma \ref{lemma:logsubmultiplicative}, we can obtain the following controlled map.
    \begin{itemize}
        \item For a locally compact Hausdorff group $G$ and a continuous submultiplicative function $f:G\to \R_{>0}$, the map $\log\circ f$ is controlled.
        In particular, consider the submultiplicative norm on the $n\times n$-matrix algebra $\text{M}(n,\R)$, and fix a  closed multiplicative subgroup $G$ of $\GL(n,\mathbb{R})$.
        If the restriction of submultiplicative norm on $G$ is valued on $\R_{>0}$, then the composition of $\log$ and the norm is a controlled map from $G$ to $\R$.
        \item Fix $n\in \Z_{\geq 1}$, and let $G$ be a multiplicative closed subgroup of $\GL(n,\R)$.
         Then the composition $\log$ and the map $\|-\|$ from $G$ to $\R_{>0}$ (defined in Subsection \ref{subsection:coarsemapproper} ) is a controlled map from $G$ to $\R$.
        In fact, for $X,Y\in G$, we have:
        \[
        \|XY\|
        =\max\left\{\left| \sum_{1\leq k\leq n}x_{ik}y_{kj} \right| \middle| 1\leq i,j\leq n \right\}
        \leq \sum_{1\leq k\leq n}\|X\|\|Y\|=n\|X\|\|Y\|.
        \]
    \end{itemize}
\end{example}

Let us prove two Theorems \ref{theorem:mu0} and \ref{theorem:maintheoremimage}.
Theorem \ref{theorem:maintheoremimage} immediately follows from assertion \eqref{proposition:products:coarse} in Proposition \ref{proposition:products} and Theorem \ref{theorem:mu0}.
Thus, we may only show Theorem \ref{theorem:mu0} as below:

\begin{proof}[proof of Theorem \ref{theorem:mu0}]
    By Proposition~\ref{proposition:composition}, Corollary~\ref{corollary:phiwedge} and Lemma~\ref{lemma:logsubmultiplicative}, assertion \eqref{theorem:mu0:item:controlled} immediately holds.
    Furthermore,  the map $\mu_0$ is also controlled by 
    Proposition \ref{proposition:products}.

    Next, we shall show that $\mu_0$ is coarsely proper.
    Take a compact subset $C\subset \R^{\Lambda_0(n)}$.
    Then, there exists $R\geq 1$ such that $C\subset [-R,R]^{\Lambda_0(n)}$.
    Further, the following inclusion holds:
    \begin{align*}
        \mu_0^{-1}(C)\subset \left\{
        \begin{pmatrix}
        x_{11} & x_{12} & \cdots & x_{1n}\\
        0      & x_{22} & \cdots & x_{2n}\\
        \vdots & \ddots & \ddots & \vdots\\
        0      & \cdots & 0      & x_{nn}
        \end{pmatrix}
        \in \rmB(n)
        \ \middle|\
        \substack{
        e^{-R}\leq |x_{ii}|\leq e^R
        \quad (\forall i\in[n]),\\
        |x_{st}|\leq e^R
        \quad (\forall\,1\leq s<t\leq n)
        }
        \right\}.
    \end{align*}
    The right side of the above is compact in $\rmB(n)$.
    Hence, the set $\mu_0^{-1}(C)$ is bounded in $(\rmB(n),\mathcal{E}^\text{LR})$.
    Thus, the map $\mu_0$ is coarsely proper.
\end{proof}

\subsection{Controlled maps and Asymptotic Disjointness}

In this section, we characterize asymptotic disjointness in terms of controlled maps and apply this to give a sufficient condition for properness of the $L$-action on $\R^{n-1}$,  where $L$ is a closed subgroup of $\rmB(n)$ (see Theorem \ref{theorem:maintheoremrestrictionl}).

Firstly, we give a coarse-geometric characterization of asymptotic disjointness as below:

\begin{theorem}\label{theorem:adfunc}
    Let $(S,\mathcal{E})$ be a coarsely connected coarse space, and $A,D$ subsets of $S$.
    Then the following two conditions are equivalent: 
    \begin{enumerate}
        \item\label{theorem:adfunc:item:ad} $A\pitchfork_\mathcal{E} D$. That is, the pair $(A,D)$ is asymptotically disjoint in $(S,\mathcal{E})$ (see Definition \ref{definition:asymptoticallydisjoint}).
        \item\label{theorem:adfunc:item:func} There exists a coarse space $(T,\mathcal{F})$ and a controlled map $f:S\to T$ such that $f(D)$ is bounded in $T$ and the restriction $f|_A$ is coarsely proper.
        Here, $(A,\mathcal{E}|_A)$ is subcoarse space of $(S,\mathcal{E})$.
    \end{enumerate}
\end{theorem}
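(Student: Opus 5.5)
We prove the two implications separately. The direction (2)$\Rightarrow$(1) is a direct unwinding of the definitions; for (1)$\Rightarrow$(2) we build an explicit target coarse space and map.

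\emph{(2)$\Rightarrow$(1).} Let $f\colon S\to T$ be controlled, with $f(D)$ bounded in $T$ and $f|_A$ coarsely proper. Fix $E\in\mathcal{E}$; we show $A\cap E[D]$ is bounded. Since $f$ is controlled, $\Ad_f(E)\in\mathcal{F}$. Hence
\[
f(A\cap E[D])\subset \Ad_f(E)[f(D)],
\]
and the right-hand side is bounded, being the neighborhood of a bounded set by an entourage. Therefore
\[
A\cap E[D]\subset (f|_A)^{-1}\bigl(\Ad_f(E)[f(D)]\bigr),
\]
which is bounded in $(A,\mathcal{E}|_A)$ by coarse properness of $f|_A$. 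A set bounded in the subspace $A$ is bounded in $S$, because $\mathcal{E}|_A=\{E\cap(A\times A)\}$ consists of entourages of $S$. Thus $A\pitchfork_\mathcal{E}D$. This direction does not use coarse connectedness.

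\emph{(1)$\Rightarrow$(2).} Take $T:=S$ as a set, with the coarse structure $\mathcal{F}$ obtained from $\mathcal{E}$ by collapsing $D$ to a bounded set. Concretely, let $\mathcal{F}$ be generated by $\mathcal{E}\cup\{D\times D\}$: its elements are the subsets of finite composites of elements of $\mathcal{E}$ and $D\times D$, together with the diagonal and closed under finite unions. This is a coarse structure, and $f:=\id_S\colon(S,\mathcal{E})\to(S,\mathcal{F})$ is controlled because $\mathcal{E}\subset\mathcal{F}$. By construction $f(D)=D$ is bounded. If $D=\emptyset$ we may take $\mathcal{F}=\mathcal{E}$; coarse connectedness is used here to ensure that bounded sets are exactly the subsets of $E[p]$.

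\emph{Main step: $f|_A$ is coarsely proper.} Let $B\subset S$ be $\mathcal{F}$-bounded; we must show $A\cap B$ is $\mathcal{E}$-bounded. Fix $p\in D$. Then $B\subset F[p]$ for some $F\in\mathcal{F}$. Enlarging if necessary, $F$ is a composite
\[
E_k\circ(D\times D)\circ E_{k-1}\circ\cdots\circ(D\times D)\circ E_0
\]
with $E_i\in\mathcal{E}$ symmetric and containing the diagonal; finite unions can be absorbed, since unions of composites are contained in a single composite once each $E_i$ is enlarged and $D\times D\supset\diag(D)$ is inserted. Because $p\in D$, every point of $F[p]$ lies in $E_k[D]$: one peels off the composite from the right, noting that each $(D\times D)$ factor lands back in $D$. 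Hence $B\subset E_k[D]$ for a single $E_k\in\mathcal{E}$, and so
\[
A\cap B\subset A\cap E_k[D],
\]
which is bounded by asymptotic disjointness. Finally, boundedness in $S$ gives boundedness in $(A,\mathcal{E}|_A)$, since $(A\cap B)\times(A\cap B)\in\mathcal{E}$ is contained in $A\times A$. If $D$ is empty, $A$ itself must be bounded ($A\cap E[\emptyset]=\emptyset$ gives no information, so this case is handled directly: $A\pitchfork\emptyset$ then carries no constraint, and $\mathcal{F}=\mathcal{E}$ with coarse connectedness reduces to checking that $\mathcal{E}$-bounded subsets of $A$ are bounded, which is trivial).

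The main obstacle is the bookkeeping in the main step: verifying that every entourage of the generated structure is contained in a single normalized composite, and that its neighborhood of $p\in D$ collapses to $E_k[D]$.
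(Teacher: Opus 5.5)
Your proposal is correct and is essentially the paper's argument: (2)$\Rightarrow$(1) is the same unwinding, and for (1)$\Rightarrow$(2) you use the same identity map into $S$ with $D\times D$ adjoined to $\mathcal{E}$. The one real difference is in the properness step: you put the basepoint $p$ in $D$ (via coarse connectedness), so that $F[p]$ collapses into a single $E[D]$, whereas the paper keeps an arbitrary $p$ and uses $A\cap B\subset (A\cap E[D])\cup(A\cap E\circ F[p])$ together with coarse connectedness.

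Two minor slips do not affect validity. With bare $D\times D$ factors, a union such as $\diag(S)\cup(D\times D)$ need not lie in a single composite; this is harmless, since $F[p]\subset E[D]$ survives finite unions, and the paper's choice $(D\times D)\cup\diag(S)$ avoids the issue. Also, for $D=\emptyset$ the set $A$ need not be bounded, but your fallback $\mathcal{F}=\mathcal{E}$ handles that case correctly anyway.
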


\begin{proof}
    Firstly, assume \eqref{theorem:adfunc:item:ad}, and prove \eqref{theorem:adfunc:item:func}.
    Put $E_D:=(D\times D)\cup \diag(S)$.
    We shall begin by proving that the following family forms a coarse base on $S$:
    \[
        \widetilde{\mathcal{E}_0}:=\{E\circ E_D\circ F\mid E,F\in \mathcal{E}\}.
    \]
    it suffices to check the following four conditions:    
    \begin{enumerate}
        \item[(i)'] There exists $E_0 \in \widetilde{\mathcal{E}_0}$ such that  $\diag (S) \subset E_0$.
        \item[(ii)'] For each $E \in \widetilde{\mathcal{E}_0}$, there exists $E_0 \in \widetilde{\mathcal{E}_0}$ such that $E^\dagger \subset E_0$.
        \item[(iii)'] For each $E_1,E_2 \in \widetilde{\mathcal{E}_0}$, 
        there exists $E_0 \in \widetilde{\mathcal{E}_0}$ such that $E_1 \cup E_2 \subset E_0$.
        \item[(iv)'] For each $E_1,E_2 \in \widetilde{\mathcal{E}_0}$, 
        there exists $E_0 \in \widetilde{\mathcal{E}_0}$ such that $E_1 \circ E_2 \subset E_0$.
    \end{enumerate}
    Assertion (i)'-- (iii)' immediately follows from:
    \begin{itemize}
        \item $\diag(S)\subset \diag(S)\circ E_D\circ \diag(S)$,
        \item $(E\circ E_D\circ F)^\dagger=F^\dagger \circ E_D\circ E^\dagger$ for $E,F\in \mathcal{E}$,
        \item $(E_1\circ E_D\circ F_1)\cup (E_2\circ E_D\circ F_2)\subset (E_1\cup E_2)\circ E_D\circ (F_1\cup F_2)$ for $E_1,E_2,F_1,F_2\in \mathcal{E}$.
    \end{itemize}
    Let us check (iv)'.
    For $E_1,E_2,F_1,F_2\in \mathcal{E}$, the following holds:
    \begin{align*}
        &(E_1\circ E_D\circ F_1)\circ (E_2\circ E_D\circ F_2)\\
        &=(E_1\circ ((D\times D)\cup \diag(S))\circ F_1)\circ (E_2\circ ((D\times D)\cup \diag(S))\circ F_2)\\
        &=((E_1\circ (D\times D)\circ F_1)\circ (E_2\circ (D\times D)\circ F_2))
        \cup((E_1\circ (D\times D)\circ F_1)\circ (E_2\circ  \diag(S)\circ F_2))\\
        &\quad \cup((E_1\circ \diag(S)\circ F_1)\circ (E_2\circ (D\times D)\circ F_2))
        \cup ((E_1\circ \diag(S)\circ F_1)\circ (E_2\circ \diag(S)\circ F_2))\\
        &\subset(E_1\circ (D\times D)\circ F_2)
        \cup(E_1\circ (D\times D)\circ (F_1\circ E_2\circ F_2))\\
        &\quad\cup((E_1\circ F_1\circ E_2)\circ (D\times D)\circ F_2)
        \cup((E_1\circ F_1\circ E_2)\circ \diag(S)\circ F_2)\\
        &\subset(E_1\circ E_D\circ F_2)
        \cup(E_1\circ E_D\circ (F_1\circ E_2\circ F_2))
        \cup((E_1\circ F_1\circ E_2)\circ E_D\circ F_2).
    \end{align*}
    Since $\mathcal{E}$ is closed under the composition and assertion (iii)', there exists $\widetilde{E}\in \widetilde{\mathcal{E}_0}$ such that $(E_1\circ E_D\circ F_2)
    \cup(E_1\circ E_D\circ (F_1\circ E_2\circ F_2))
    \cup((E_1\circ F_1\circ E_2)\circ E_D\circ F_2)\subset \widetilde{E}$.
    Hence, $(E_1\circ E_D\circ F_1)\circ (E_2\circ E_D\circ F_2)$ is also a subset of $\widetilde{E}\in \widetilde{\mathcal{E}_0}$.
    Thus, assertion (iv)' holds.
    Therefore, the family $\widetilde{\mathcal{E}_0}$ forms a coarse base.
    Next, consider the identity map $\id$ from $(S,\mathcal{E})$ to $(S,\langle \widetilde{\mathcal{E}_0}\rangle )$.
    It immediately holds that this map is controlled.
    Hence, our goal is to show that  the image $\id(D)=D$ is bounded in $(S,\langle \widetilde{\mathcal{E}_0}\rangle )$ and the restriction $\id|_A$ from $(A,\mathcal{E}|_A)$ to $(S,\langle \widetilde{\mathcal{E}_0}\rangle )$ is coarsely proper.
    By $D\times D\subset \diag(S)\circ E_D\circ \diag(S)$, the set $D$ is bounded in $(S,\langle \widetilde{\mathcal{E}_0}\rangle )$.
    Take a bounded set $B$ in $(S,\langle \widetilde{\mathcal{E}_0}\rangle )$.
    We may assume that $B$ is nonempty.
    Then, there exists a point $p\in S$ and entourages $E,F\in \mathcal{E}$ such that $B\subset E\circ E_D\circ F[p]$.
    \begin{align*}
        (\id|_A)^{-1}(B)&=A\cap B\\
        &\subset A\cap E\circ E_D\circ F[p]\\
        &\subset A\cap (E[D]\cup E\circ F[p])\\
        &\subset (A\cap E[D])\cup (A\cap E\circ F[p])
    \end{align*}
    By the assumption \eqref{theorem:adfunc:item:ad}, 
    the set $A\cap E[D]$ is bounded in $(S,\mathcal{E})$, and it is also bounded in $(A,\mathcal{E}|_A)$.
    Furthermore, the intersection $(A\cap E\circ F[p])$ is a bounded set in $(A,\mathcal{E}|_A)$.
    Since $(S,\mathcal{E})$ is coarsely connected, 
    $(A,\mathcal{E}|_A)$ is also coarsely connected.
    Hence, the union $(A\cap E[D])\cup (A\cap E\circ F[p])$ is bounded in $(A,\mathcal{E}|_A)$, and 
    the subset $(\id|_A)^{-1}(B)\subset (A\cap E[D])\cup (A\cap E\circ F[p])$ is also.
    Thus, the restriction $\id|_A$ from $(A,\mathcal{E}|_A)$ to $(S,\langle \widetilde{\mathcal{E}_0}\rangle )$ is coarsely proper.
    Therefore, condition \eqref{theorem:adfunc:item:func} holds.
    
    Next, let us prove the converse claim.
    Let  $(T,\mathcal{F})$ be a coarse space and $f$ a controlled map from $S$ to $T$.
    Suppose that $f(D)$ is bounded in $T$ and the restriction $f\circ\iota_A$ from $(A,\mathcal{E}|_A)$ to $(T,\mathcal{F})$ is coarsely proper.
    Take an entourage $E\in \mathcal{E}$.
    Our goal is to show that $A\cap E[D]$ is bounded in $(S,\mathcal{E})$.
    \begin{align*}
        A\cap E[D]&\subset \iota_A\circ \iota_A^\dagger \circ E[D] \\
        &\subset \iota_A\circ \iota_A^\dagger \circ  f^{\dagger}\circ f\circ E\circ f^\dagger\circ f\circ [D]\\
        &= \iota_A\circ (f\circ \iota_A)^{\dagger}\circ \Ad_f(E)\circ f[D]
    \end{align*}
    By controlledness of $f$, $\Ad_f(E)$ belongs to $\mathcal{F}$.
    Since $f(D)$ is bounded in $T$ and $f\circ \iota_A$ is coarsely proper, the set $\Ad_f(E) [f(D)]$ is bounded in $T$ and $(f\circ \iota_A)^{\dagger}\circ \Ad_f(E) [f(D)]$ is also bounded in $(A,\mathcal{E}|_A)$.
    Hence, the subset $A\cap E[D]\subset \iota_A\circ (f\circ \iota_A)^{\dagger}\circ \Ad_f(E)\circ f[D]$ is bounded in $(S,\mathcal{E})$.
    Thus, we obtain $A\pitchfork_\mathcal{E} D$.
\end{proof}

Next, we introduce a sufficient condition of properness of appropriate actions by applying Theorem \ref{theorem:adfunc} as below.
To simplify notation, we adopt the convention that:
\[
    \bigwedge^0 X := 1\in \GL(1,\R).
\]

For $n\in \Z_{\geq 1}$, fix the following notation:
\begin{align*}
    \Theta(n):=\{(i,q,\varepsilon)\in [n]\times [n]\times \{\pm 1\}\mid 1\leq q\leq n-i\},
\end{align*}

and for a nonempty subset $\Theta\subset \Theta(n)$, put 
\begin{align*}
    \varphi_\Theta:\rmB(n)\to \R^{ \Theta}, \, X\mapsto \left(\log\left(
    \frac{\|\bigwedge^q \Phi_{in}(X^\varepsilon)\|}{\max\{\|\bigwedge^q\Phi_{in-1}(X^\varepsilon)\|,\|\bigwedge^{q-1}\Phi_{in-1}(X^\varepsilon)\||x_{nn}^\varepsilon|\}}
    \right)\right)_{(i,q,\varepsilon)\in \Theta}.
\end{align*}
for every invertible matrix $X$, so that $\|\bigwedge^0 X\|=1$. 
For simplicity, we denote $\varphi_\theta:=\varphi_{\{\theta\}}$ for each $\theta \in \Theta(n)$.

Next, we focus on the following action.
For $n\in \Z_{\geq 2}$,
let $\rho$ be the $\rmB(n)$-action on $\R^{n-1}$ as below:
\begin{align*}
    \rmB(n)\times \R^{n-1}&\to \R^{n-1}\\
    \left(
    \begin{pmatrix}
        \Phi_{1n-1}(X) &v \\
        0 & \Phi_{nn}(X)
    \end{pmatrix},w
    \right)
    &\mapsto  \frac{\Phi_{1n-1}(X)w+v}{\Phi_{nn}(X)}.
\end{align*} 
Here, $v:=(x_{1n},\cdots,x_{n-1n})^T\in \R^{n-1}$.

In the above setting, it is worth emphasizing that $\R^{n-1}$  can be regarded as the homogeneous space $\rmB(n)/H$.
Here, 
\[
H:=\left\{X\in \rmB(n)\mid x_{in}=0 \text{ for all } 1\leq i\leq n-1 \right\}.
\]

Applying Theorem \ref{theorem:adfunc} to $\rmB(n)$, we obtain the following theorem:

\begin{theorem}\label{theorem:maintheoremrestrictionl}
    Fix $n\in \Z_{\geq 2}$ and a non empty subset $\Theta\subset \Theta(n)$. 
    Let $L$ be a closed subgroup of $\rmB(n)$.
    Assume that the restriction $\varphi_{\Theta}|_L$ is coarsely proper.
    Then the $L$-action $\rho|_L$ on $\R^{n-1}$ is proper.
\end{theorem}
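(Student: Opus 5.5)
The plan is to apply Theorem~\ref{theorem:adfunc} to $(S,\mathcal{E})=(\rmB(n),\mathcal{E}^\mathrm{LR})$ with $A=L$, $D=H$, $(T,\mathcal{F})=(\R^\Theta,\mathcal{E}^d)$ and $f=\varphi_\Theta$. Since $(\rmB(n),\mathcal{E}^\mathrm{LR})$ is coarsely connected, Theorem~\ref{theorem:adfunc} gives $L\pitchfork H$ once three facts are established: $\varphi_\Theta$ is controlled, $\varphi_\Theta(H)$ is bounded, and $\varphi_\Theta|_L$ is coarsely proper. The last is the hypothesis. Note that coarse properness of $\varphi_\Theta|_L$ for $L$ with its LR-structure coincides with coarse properness for the subcoarse structure $\mathcal{E}|_L$, because both induce the compact bornology on the closed subgroup $L$ (Proposition~\ref{proposition:closedeqqborno}). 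Then Example~\ref{example:properad} converts $L\pitchfork H$ into properness of the $L$-action on $\rmB(n)/H\cong\R^{n-1}$. We also check that this identification is $L$-equivariant: the stabilizer of $0$ under $\rho$ is exactly $H$, and $\rho$ is transitive, e.g.\ via the matrices with $A=I$, $\lambda=1$.

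For controlledness, write each component of $\varphi_\theta$ with $\theta=(i,q,\varepsilon)$ as
\[
\mu_{(i,n,q,\varepsilon)}-\max\{\mu_{(i,n-1,q,\varepsilon)},\ \mu_{(i,n-1,q-1,\varepsilon)}+\varepsilon\log|x_{nn}|\}.
\]
Each term is controlled by Theorem~\ref{theorem:mu0}\eqref{theorem:mu0:item:controlled}; when $q=n-i$ the index $(i,n-1,q,\varepsilon)$ still lies in $\Lambda_1(n)$. The term $X\mapsto\log|x_{nn}|$ is $\mu_{(n,n,1,1)}$, and the convention $\bigwedge^0=1$ handles $q=1$. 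Example~\ref{example:controlledtoR} then shows that differences and maxima of controlled functions are controlled, and Proposition~\ref{proposition:products} yields that $\varphi_\Theta$ is controlled.

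The main obstacle is the boundedness of $\varphi_\Theta(H)$. For $X\in H$, the block $\Phi_{in}(X^\varepsilon)$ is block diagonal, $\mathrm{diag}(\Phi_{i,n-1}(X^\varepsilon),x_{nn}^\varepsilon)$, since $H$ is a group and $X^{-1}\in H$. For a block diagonal matrix $\mathrm{diag}(Y,c)$, every $q\times q$ minor is either a $q$-minor of $Y$, or $c$ times a $(q-1)$-minor of $Y$, or $0$. Hence
\[
\Bigl\|\bigwedge^q\Phi_{in}(X^\varepsilon)\Bigr\|=\max\Bigl\{\Bigl\|\bigwedge^q Y\Bigr\|,\ \Bigl\|\bigwedge^{q-1}Y\Bigr\||c|\Bigr\},
\]
so $\varphi_\Theta(H)=\{0\}$, which is certainly bounded. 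The care needed is in the index bookkeeping: justifying the minor decomposition, checking that the max-norm matches exactly, and confirming that the denominator is positive, i.e.\ that the relevant $\bigwedge^q\Phi_{i,n-1}$ norms are nonzero, which needs $q\le n-i$.
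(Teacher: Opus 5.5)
Your proposal is correct and follows the paper's proof essentially step for step. You apply Theorem~\ref{theorem:adfunc} with $f=\varphi_\Theta$, get controlledness from the decomposition of each $\varphi_\theta$ into the $\mu$'s, and show $\varphi_\Theta(H)=\{0\}$ via the block-diagonal minor computation. Your extra checks fill in points the paper leaves implicit: that $X^{-1}\in H$ is needed for $\varepsilon=-1$, that the LR-structure on the closed subgroup $L$ and the subspace structure $\mathcal{E}|_L$ have the same bounded sets, and that $\rmB(n)/H\cong\R^{n-1}$ is $L$-equivariant.
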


\begin{proof}
    Hence, by Theorem \ref{theorem:adfunc}, it suffice to show that, the following two assertions:
    \begin{enumerate}
        \item\label{proof:item:controlled} the map $\varphi_\Theta$ is controlled.
        \item\label{proof:item:bd} the image $\varphi_\Theta(H)$ is bounded in $\R^{\Theta}$.
    \end{enumerate}
    Firstly, let us show assertion \eqref{proof:item:controlled}.
    Take $\theta:=(i,q,\varepsilon)\in \Theta$.
    Then one can see that the following equality holds: 
    \[
    \varphi_{\theta}=\mu_{(i,n,q,\varepsilon)}-\max\{\mu_{(i,n-1,q,\varepsilon)},\mu_{(i,n-1,q-1,\varepsilon)}+ \mu_{(n,n,1,\varepsilon)}\}.
    \]
    Here, in case $q-1=0$, $\mu_{(i,n-1,0,\varepsilon)}$ is  defined by the constant map with value $0$, in particular, $\mu_{(i,n-1,0,\varepsilon)}$ is controlled.
    Since the four functions $\mu_{(i,n,q,\varepsilon)}, \mu_{(i,n-1,q,\varepsilon)}, \mu_{(i,n-1,q-1,\varepsilon)}$ and $\mu_{(n,n,1,\varepsilon)}$ are all controlled by Theorem \ref{theorem:mu0}, the map $\varphi_{\theta}$ is also controlled (see also Example \ref{example:controlledtoR}).
    Thus, the map $\varphi_{\Theta}$ is also controlled by Proposition \ref{proposition:products}.   
    Next, we check assertion \eqref{proof:item:bd}.
    For all $\theta:=(i,q,\varepsilon)\in \Theta$, $\varphi_\theta(H)=\{0\}$.
    In fact, for any $X\in H$, we have
    \begin{align*}
        \left\|\bigwedge^q \Phi_{in}(X^\varepsilon)\right\|
        &=\max \left\{ \bigl| \det ((\Phi_{in}(X^\varepsilon))_{IJ} ) \bigr| \middle| I,J\in \binom{[n-i+1]}{q} \right\} \\
        &=\max \left\{ \left|\det \left( 
        \begin{pmatrix}
            \Phi_{in-1}(X^\varepsilon) & 0\\
            0 & x_{nn}^\varepsilon
        \end{pmatrix}
        _{IJ} \right) \right| \middle| I,J\in \binom{[n-i+1]}{q} \right\} \\
        &=\max\left\{\left\|\bigwedge^q\Phi_{in-1}(X^\varepsilon)\right\|,\left\|\bigwedge^{q-1}\Phi_{in-1}(X^\varepsilon)\right\||x_{nn}^\varepsilon|\right\}. 
    \end{align*}
    Hence, the equality $\varphi_\Theta(H)=\{0\}\subset \R^{\Theta}$ holds.
    Thus, the image $\varphi_\Theta(H)$ is bounded in $\R^{\Theta}$.
\end{proof}

\section{Examples of Applications of the Main Theorem}

In this section, we give two examples of applications of two Theorems \ref{theorem:maintheoremimage} and \ref{theorem:maintheoremrestrictionl}, and develop some auxiliary results used in their applications.

\subsection{Examples of Applications of Theorem \ref{theorem:maintheoremimage}}

In this subsection, we illustrate the theorem with a test case, showing that Theorem \ref{theorem:maintheoremimage} recovers the following proper action.

\begin{example}
    Let us consider the following two closed subgroups of $\rmB (4)$: 
  \begin{align*}
    L&:=\left\{
      \begin{pmatrix}
        1 & t & s+\frac{t^2}{2} & u  \\
        0 & 1 & t & s+\frac{t^2}{2}  \\
        0 & 0 & 1 & t  \\
        0 & 0 & 0 & 1  \\
      \end{pmatrix} \middle| 
      s,t,u\in \R
      \right\}, \\
      H&:=\left\{\begin{pmatrix}
        x_{11} & x_{12} & 0 & 0  \\
        0 & x_{22} & 0 & 0  \\
        0 & 0 & x_{33} & x_{34}  \\
        0 & 0 & 0 & x_{44}  \\
      \end{pmatrix} \middle| 
      x_{ii}\in \R^\times \, (i=1,2,3,4), x_{jj+1} \in \R \, (j=1,3)
      \right\}.
  \end{align*}
  Then one can see that the $L$-action on $\rmB(4)/H$ is proper.
  Let us check the properness of the action by Theorem \ref{theorem:maintheoremimage}.
  Consider the set:
  \[
  \Lambda:=\{(i,j,1,1)\mid (i,j)=(1,1), (2,2),(3,3),(4,4),(1,4),(1,2),(2,3),(3,4)\},
  \]
  and fix $R\geq 1$.
  For simplicity, let us put $\mu_{(i,j)}:=\mu_{(i,j,1,1)}$ for each $(i,j,1,1)\in \Lambda$.
  We shall prove that the intersection $E_R[\mu_{\Lambda}(L)]\cap \mu_{\Lambda}(H)$ is bounded in $\R^\Lambda \cong \R^8$. Put a bounded set:
  \[
   B:=\left\{\begin{pmatrix}
        x_{11} & x_{12} & 0 & 0  \\
        0 & x_{22} & 0 & 0  \\
        0 & 0 & x_{33} & x_{34}  \\
        0 & 0 & 0 & x_{44}  \\
      \end{pmatrix} \middle| 
      |x_{ii}|\in [e^{-R},e^R] \, (i=1,2,3,4),\, -e^{3R}\leq x_{jj+1}\leq e^{3R} \, (j=1,3)
      \right\}.
  \]
  It suffices to show that the following inclusion holds:
  \[
  E_R[\mu_{\Lambda}(L)]\cap \mu_{\Lambda}(H)\subset \mu_{\Lambda}(B).
  \]
  Take a point $p\in E_R[\mu_{\Lambda}(L)]\cap \mu_{\Lambda}(H)$.
  Then there exists $h\in H, \ell (s,t,u)\in L$ such that $p=\mu_\Lambda(h)$ and $d\bigl(\mu_\Lambda(h),\mu_\Lambda(\ell(s,t,u))\bigr)\leq R$.
  For each $i\in [4]$, the inequality $e^{-R}\leq |x_{ii}|\leq e^R$ holds by $|\mu_{(i,i)}(h)-\mu_{(i,i)}(\ell (s,t,u))|\leq R$.
  Then, we have:
  \begin{align*}
      |\mu_{(2,3)}(h)-\mu_{(2,3)}(\ell (s,t,u))|\leq R, \\
      |\log(\max\{|x_{22}|,|x_{33}|\})-\log(\max\{1,|t|\})|\leq R, \\
      e^{-R}\leq \frac{\max\{1,|t|\}}{\max\{|x_{22}|,|x_{33}|\}}\leq e^R, \\
      e^{-2R}\leq \max\{1,|t|\}\leq e^{2R}. 
  \end{align*}
  Hence, it leads to the following:
  \begin{align*}
      |\mu_{(1,2)}(h)-\mu_{(1,2)}(\ell (s,t,u))|\leq R, \\
      |\log(\max\{|x_{11}|,|x_{22}|,|x_{12}|\})-\log(\max\{1,|t|\})|\leq R,\\ 
      e^{-R}\leq \frac{\max\{|x_{11}|,|x_{22}|,|x_{12}|\}}{\max\{1,|t|\}}\leq e^R,\\ 
      e^{-3R}\leq \max\{|x_{11}|,|x_{22}|,|x_{12}|\}\leq e^{3R},\\ 
      |x_{12}|\leq e^{3R}.\\ 
  \end{align*}
  By a similar argument, we can also obtain $|x_{34}|\leq e^{3R}$.
  Hence the inclusion $E_R[\mu_{\Lambda}(L)]\cap \mu_{\Lambda}(H)\subset \mu_{\Lambda}(B)$ holds, and $E_R[\mu_{\Lambda}(L)]\cap \mu_{\Lambda}(H)$ is bounded in $\R^\Lambda\cong \R^8$.
  Thus, the pair $(\mu_{\Lambda}(L),\mu_{\Lambda}(H))$ is asymptotically disjoint.
  Therefore, by Theorem \ref{theorem:maintheoremimage}, the natural $L$-action on $\rmB(4)/H$ is proper.
\end{example}

\subsection{Closeness of functions valued on $\R_{>0}$}\label{subsection:closeness}
In this section, we prepare several propositions to facilitate the applications of Theorem \ref{theorem:maintheoremrestrictionl} discussed in the next section.

\begin{proposition}\label{proposition:maxplus}
    Let $S$ be a set, $\{f_i|i=0,1,2\}$ a family of functions from $S$ to $\R_{\geq 0}$ such that $f_0$ values on $\R_{>0}$, and $\{a_i\mid i=0,1,2\}$ a family of elements in $\R_{>0}$.
    Then the following four functions are pairwise close:
    \[
    \max_{i=0,1,2}f_i,\,\max\{a_0f_0+a_1f_1,a_2f_2\},\, \max\{a_0f_0,a_1f_1+a_2f_2\},\, \sum_{i=0}^2a_if_i.
    \]
    Here, $\R_{>0}$ is equipped with the LR-coarse structure.
\end{proposition}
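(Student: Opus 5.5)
Since $\R_{>0}$ is abelian, its LR-coarse structure coincides with the L-coarse structure, whose entourages are contained in the sets $E^{\mathrm{L}}_{[1/R,R]}=\{(x,y)\mid R^{-1}y\le x\le Ry\}$ for $R\ge1$ (compact sets in $\R_{>0}$ lie in such intervals). Two maps $f,g:S\to\R_{>0}$ are therefore close exactly when there is $R\ge 1$ with $R^{-1}g(x)\le f(x)\le Rg(x)$ for all $x\in S$. Equivalently, via the coarse equivalence $\log$ of Corollary \ref{corollary:log} and Proposition \ref{proposition:closecomposition}, closeness means $\sup_{x}|\log f-\log g|<\infty$. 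I will therefore show that each of the four functions is close to $M:=\max_{i=0,1,2}f_i$, and then use that closeness is an equivalence relation.

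First I check that all four functions take values in $\R_{>0}$, so that the statement makes sense. Each one is bounded below by $\min\{1,a_0\}f_0>0$, since $f_0>0$ and the $f_i$ are nonnegative.

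Next I put $a:=\max_i a_i$ and $b:=\min_i a_i>0$. For each of the three combined functions $F$ I prove the two-sided bound $bM\le F\le 3aM$ pointwise, using the elementary comparison $\max_i y_i\le \sum_i y_i\le 3\max_i y_i$ for $y_i\ge0$.

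For the upper bound, each $F$ is at most $\sum_i a_if_i\le a\sum_i f_i\le 3aM$. This is because a maximum of two nonnegative quantities is at most their sum, and every grouping appearing in the statement is a partial sum of $\sum_i a_if_i$.

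For the lower bound, each of the three combined expressions dominates $a_jf_j$ for every $j$. For example, $\max\{a_0f_0+a_1f_1,a_2f_2\}\ge a_0f_0+a_1f_1\ge a_jf_j$ for $j=0,1$, and it is $\ge a_2f_2$ by definition; the other two expressions are handled the same way. Hence $F\ge \max_j a_jf_j\ge bM$.

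Taking $R:=\max\{3a,1/b,1\}$, I then get $(M,F)\in E^{\mathrm{L}}_{[1/R,R]}$ for all points of $S$, so $F\sim M$. By transitivity of closeness, the four functions are pairwise close.

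There is no real obstacle here. The only point needing care is the identification of entourages of the LR-coarse structure on $\R_{>0}$ with multiplicative bounds $R^{-1}\le f/g\le R$. This follows from the description of the coarse base $\{E^{\mathrm{L}}_C\}$ in Example \ref{example:coarse}, together with the fact that every compact $C\subset\R_{>0}$ is contained in some $[1/R,R]$.
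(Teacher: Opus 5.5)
Your proof is correct and follows the paper's argument. The paper likewise derives the proposition from the two-sided bounds $(\min_i a_i)\max_i f_i\le F\le \sum_i a_if_i\le 3(\max_i a_i)\max_i f_i$ for each combined function $F$; your explicit identification of closeness in $(\R_{>0},\mathcal{E}^{\mathrm{LR}})$ with bounded ratios, and your positivity check, just spell out what the paper leaves implicit.
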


the above proposition immediately follows from the following inequalities:
\begin{align*}
    \left(\min_{i=0,1,2}{a_i}\right)\left( \max_{i=0,1,2}f_i \right)\leq \max\{a_0f_0+a_1f_1,a_2f_2\}\leq \sum_{i=0}^2a_if_i\leq 3\left(\max_{i=0,1,2}{a_i}\right) \left( \max_{i=0,1,2}f_i \right),\\
     \left(\min_{i=0,1,2}{a_i}\right) \left( \max_{i=0,1,2}f_i \right)\leq \max\{a_0f_0,a_1f_1+a_2f_2\}\leq \sum_{i=0}^2a_if_i\leq 3\left(\max_{i=0,1,2}{a_i}\right)\left( \max_{i=0,1,2}f_i \right).
\end{align*}

Furthermore, the following proposition also holds:

\begin{proposition}\label{proposition:maxreduction}
    Let $S$ be a set, $\{f_i|i=0,1,2\}$ a family of functions from $S$ to $\R_{\geq 0}$ such that $f_0$ values on $\R_{>0}$.
    Assume that there exists $C\geq 1$ such that $f_2\leq C\max\{f_0,f_1\}$.
    Then the maps $\max\{f_0,f_1,f_2\}$ and $\max\{f_0,f_1\}$ are close.
    Here, $\R_{>0}$ is equipped with the LR-coarse structure.
\end{proposition}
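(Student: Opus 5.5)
Closeness of two maps into $\R_{>0}$ with the LR-coarse structure (which coincides with the L-coarse structure, since $\R_{>0}$ is abelian) amounts to a two-sided multiplicative bound. By Corollary~\ref{corollary:log} and Example~\ref{example:closetoR}, $g\sim h$ holds if and only if there is $K\geq 1$ with $K^{-1}h(x)\leq g(x)\leq K h(x)$ for all $x\in S$. Equivalently, $\Xi_{g,h}\subset E^{\mathrm{L}}_{[K^{-1},K]}$, or $\sup_x|\log g(x)-\log h(x)|<\infty$. So the plan is to produce such a $K$ for $g:=\max\{f_0,f_1,f_2\}$ and $h:=\max\{f_0,f_1\}$. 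Note first that both maps indeed take values in $\R_{>0}$, because $f_0>0$ and $f_1,f_2\geq 0$; this is exactly why the hypothesis on $f_0$ is needed, since otherwise $\log$ would not be defined.

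The required pointwise inequalities are
\[
\max\{f_0,f_1\}\leq \max\{f_0,f_1,f_2\}\leq \max\{\max\{f_0,f_1\},C\max\{f_0,f_1\}\}=C\max\{f_0,f_1\}.
\]
The first is trivial. The second uses the hypothesis $f_2\leq C\max\{f_0,f_1\}$ together with $C\geq 1$. Hence $1\leq g/h\leq C$ on $S$, so $0\leq \log g-\log h\leq \log C$. Therefore $\log\circ g\sim\log\circ h$ as maps into $\R$ with the bounded coarse structure.

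Finally, transport this back to $\R_{>0}$. Since $\exp$ is a continuous group homomorphism, it is controlled by Proposition~\ref{proposition:grouphom}. Applying Proposition~\ref{proposition:closecomposition} with $\varphi=\psi=\exp$ then gives $g=\exp\circ\log\circ g\sim \exp\circ\log\circ h=h$. Alternatively, we can observe directly that $\{(g(x),h(x))\}\subset E^{\mathrm{L}}_{[C^{-1},C]}$.

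There is no real obstacle here. The only point requiring care is positivity of the values, so that both maps land in $\R_{>0}$, and this is guaranteed by $f_0>0$.
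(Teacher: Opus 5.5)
Your proposal is correct and takes essentially the same approach as the paper: the paper simply records the pointwise sandwich $\max\{f_0,f_1,f_2\}\leq C\max\{f_0,f_1\}\leq C\max\{f_0,f_1,f_2\}$ (your $1\leq g/h\leq C$) and reads off closeness in $\R_{>0}$. Your detour through $\log$ and $\exp$ is harmless but unnecessary, since your direct observation $\Xi_{g,h}\subset E^{\mathrm{L}}_{[C^{-1},C]}$ already suffices.
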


The above proposition immediately holds by the following:
\[
    \max\{f_0,f_1,f_2\}\leq C\max\{f_0,f_1\}\leq C\max\{f_0,f_1,f_2\}.
\]

Since the closeness is a equivalence relation, Propositions \ref{proposition:maxplus} and \ref{proposition:maxreduction} imply the following corollary:

\begin{corollary}\label{corollary:maxfull}
    Let $S$ be a nonempty set, $I$ a finite set with $\# I\geq 2$, $J_1,J_2$ subsets of $I$, 
    $\{f_i\mid i\in I\}$ a family of functions from $S$ to $\R_{\geq 0}$,
    $\{a_{(\sigma,i)} \mid \sigma =1,2, \, i\in I\setminus J_\sigma\}$ a family of elements of $\R_{>0}$, and $\mathcal{P},\mathcal{Q}$ partitions of $I$.
    Assume that there exists $i_0\in I\setminus (J_1\cup J_2)$ such that $f_{i_0}$ values on $\R_{>0}$, and for each $\sigma\in \{1,2\}$ and $j\in J_\sigma$, there exists $C_j\geq 1$ such that $f_j\leq C_j\max\{f_i\mid i\in I\setminus J_\sigma \}$.
    Then the following two functions are pairwise close:
    \[
    \max\left\{\sum_{i\in P\setminus J_1}a_{(1,i)}f_i\middle| P\in \mathcal{P}\text{ with }P\neq J_1\right\},\quad 
    \max\left\{\sum_{i\in Q\setminus J_2}a_{(2,i)}f_i\middle| Q\in \mathcal{Q} \text{ with }Q\neq J_2\right\}.
    \]
    Here, $\R_{>0}$ is equipped with the LR-coarse structure.
\end{corollary}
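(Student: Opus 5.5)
The plan is to reduce Corollary~\ref{corollary:maxfull} to Propositions~\ref{proposition:maxplus} and \ref{proposition:maxreduction}, using transitivity of closeness. The idea is to show that each of the two functions is close to the single ``reference'' function $M:=\max\{f_i\mid i\in I\}$. Since closeness is an equivalence relation, this will give the claim. By the symmetry between $\sigma=1$ and $\sigma=2$, it suffices to treat
\[
g_1:=\max\left\{\sum_{i\in P\setminus J_1}a_{(1,i)}f_i\middle| P\in \mathcal{P},\ P\neq J_1\right\}.
\]
First I would record that everything in sight takes values in $\R_{>0}$, so that closeness in the LR-coarse structure on $\R_{>0}$ makes sense. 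Indeed, $f_{i_0}>0$ and $i_0\notin J_1$, so the block $P\ni i_0$ satisfies $P\neq J_1$ and contributes a term $\geq a_{(1,i_0)}f_{i_0}>0$. By Corollary~\ref{corollary:log} and Example~\ref{example:closetoR}, two positive functions $u,v$ are close iff $c^{-1}v\leq u\leq cv$ for some constant $c\geq 1$. So the whole task is to produce two-sided multiplicative bounds.

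Set $M_1:=\max\{f_i\mid i\in I\setminus J_1\}$ and $a_{\min}:=\min a_{(1,i)}$, $a_{\max}:=\max a_{(1,i)}$.

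For the upper bound, each sum $\sum_{i\in P\setminus J_1}a_{(1,i)}f_i$ has at most $\#I$ terms, so $g_1\leq \#I\, a_{\max} M_1$.

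For the lower bound, every $i\in I\setminus J_1$ lies in some block $P\in\mathcal{P}$. Since $i\notin J_1$, we have $P\neq J_1$, so $g_1\geq a_{(1,i)}f_i\geq a_{\min}f_i$. Taking the maximum over $i$ gives $g_1\geq a_{\min}M_1$. Hence $g_1\sim M_1$. This is the same inequality pattern as in Proposition~\ref{proposition:maxplus}, now for arbitrarily many terms.

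Next, the hypothesis $f_j\leq C_jM_1$ for all $j\in J_1$ gives $M_1\leq M\leq (\max_j C_j)M_1$. This is the iterated form of Proposition~\ref{proposition:maxreduction}. Hence $M_1\sim M$, and so $g_1\sim M$. The same argument gives $g_2\sim M$, and transitivity yields $g_1\sim g_2$.

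The main obstacle is bookkeeping rather than any estimate. One must check that restricting to blocks $P\neq J_1$ still covers every index of $I\setminus J_1$. It does: a block containing an index outside $J_1$ cannot equal $J_1$. Blocks $P\subseteq J_1$ with $P\neq J_1$ contribute empty sums, equal to $0$; they are harmless for the maximum because $g_1>0$ anyway. I would state this point explicitly, since it is where the conventions of the statement matter.
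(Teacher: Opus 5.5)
Your proof is correct and follows the paper's route. The paper derives the corollary from Propositions~\ref{proposition:maxplus} and \ref{proposition:maxreduction} via transitivity of closeness; you make this explicit by comparing both sides with $\max\{f_i\mid i\in I\}$ through the same two-sided multiplicative bounds, written for an arbitrary finite index set and arbitrary partitions, and you spell out the bookkeeping the paper leaves implicit (every $i\in I\setminus J_\sigma$ lies in a block different from $J_\sigma$, and $f_{i_0}>0$ makes everything $\R_{>0}$-valued).
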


Note that, for a set $I$,  $\mathcal{P}\subset \mathcal{P}(I)$ is called a \emph{partition} of $I$ if the following three conditions hold:
\begin{itemize}
    \item $\cup_{P\in \mathcal{P}}P=I$,
    \item $P\neq \emptyset$ for all $P\in \mathcal{P}$.
    \item $P \cap Q =\emptyset$ for all $P,Q\in \mathcal{P}$ with$P\neq Q$.
\end{itemize}

\subsection{An Example of Applications of Theorem \ref{theorem:maintheoremrestrictionl}}

In this section, based on the propositions established in Section \ref{subsection:closeness}, we consider the following examples of a proper action.

\begin{example}
    Let us consider the following closed subgroups of $\rmB (6)$:
    \begin{align*}
        L&:=
        \left\{
            \ell(x,y):=\begin{pmatrix}
                1 & 0 & 0 & 0 & y & x \\
                0 & 1 & x & \frac{x^2}{2} & \frac{x^3}{6} & y \\
                0 & 0 & 1 & x & \frac{x^2}{2} & 0 \\
                0 & 0 & 0 & 1 & x & 0 \\
                0 & 0 & 0 & 0 & 1 & 0 \\
                0 & 0 & 0 & 0 & 0 & 1 \\
            \end{pmatrix}
            \in \rmB(6)
            \, \middle| \, x,y\in \R
        \right\}, \\
        H&:=
        \left\{
            \begin{pmatrix}
                A & 0 \\
                0 & x_{66} \\
            \end{pmatrix}\middle|
            A\in \rmB (5), x_{66}\in \R^\times
        \right\}.
    \end{align*}   
    Let us check that the natural $L$-action on $\rmB(6)/H$ is proper as below.
    The map $\ell:\R^2\to L,\, (a,b)\mapsto \ell(a,b)$ is a topological group isomorphism.
    By assertion \eqref{proposition:grouphom:item:coarseeq} in Proposition \ref{proposition:grouphom}, the map $\ell$ is a coarse equivalence.
    Hence, we may show that the composition $\varphi_{(1,3,1)}\circ\ell:\R^2\to \R$ is coarsely proper.

    By $|\det\ell(x,y)_{(1,2,3),(1,2,3)}|=1$ and $|\det \ell(x,y)_{(1,2,3),(3,5,6)}|=\left|\frac{x^4}{3}\right|+|y^2|$, for each $f\in \R[x]$ with $\deg f\leq 4$ and $g\in \R[y]$ with $\deg g\leq 2$, 
    there exists $C\geq 1$ such that:
    \[
    |f|+|g|\leq C \max\{|\det\ell(x,y)_{(1,2,3),(1,2,3)}|,|\det \ell(x,y)_{(1,2,3),(3,5,6)}|\}.
    \]
    Hence, by Corollary \ref{corollary:maxfull}, we have: 
    \begin{align*}
        \left\|\bigwedge^3 \Phi_{16}(\ell(x,y))\right\|
        &\sim \max\left\{1,\frac{x^4}{3}+y^2, |xy|, \frac{x^2|y|}{2},|x|\left( \frac{x^4}{12}+y^2\right)\right\}\\
        &\sim \max\left\{1,x^4+y^2, |xy|, x^2|y|,|x|\left(x^4+y^2\right)\right\}.
    \end{align*}
    Furthermore, the following two inequalities hold:
    \[
    |xy|\leq \max\{|x|,|y|\}^2\leq \max\{x^4+y^2,1\}, \quad 
    2x^2|y|\leq x^4+y^2.
    \]
    Hence, we can obtain the following by using Corollary \ref{corollary:maxfull} again:
    \begin{align*}
        \left\|\bigwedge^3 \Phi_{16}(\ell(x,y))\right\|
        &\sim \max\left\{1,x^4+y^2, |x|\left( x^4+y^2\right)\right\}\\
        &\sim 1+x^4+|x|^5+(|x|+1)y^2\\
        &\sim 1+x^4+|x|^5+2(|x|+1)y^2.
    \end{align*}

    Next, let us calculate $\max\{\|\bigwedge^3 \Phi_{15}(X)\|,\|\bigwedge^2 \Phi_{15}(X)\||x_{66}|\}$.
    By the equality:
    \[
    \det \ell(x,y)_{(2,3),(4,5)}\cdot |x_{66}|=\frac{x^4}{12},
    \]
    for each $f\in \R[x]$ with $\deg f\leq 4$,
    there exists $D\geq 1$ such that:
    \[
    |f|\leq D\max\{\left|\det \ell(x,y)_{(2,3),(4,5)}\cdot |x_{66}|\right|,1\}.
    \]
    Thus, by Corollary \ref{corollary:maxfull}, we have: 
    \begin{align*}
        \max\left\{\left\|\bigwedge^3 \Phi_{15}(\ell(x,y))\right\|,|x_{66}|\cdot \left\|\bigwedge^2 \Phi_{15}(\ell(x,y))\right\|\right\}
        &\sim \max\left\{1, \frac{x^4}{12},|y|, \frac{x^2|y|}{2}\right\}\\
        &\sim \max\left\{1,x^4,|y|,x^2|y|\right\}.\\
    \end{align*}

    Therefore, the following holds by Proposition \ref{proposition:closecomposition} (see also Example \ref{example:closetoR}):
    \begin{align*}
        \varphi_{(1,3,1)}\circ\ell(x,y)&:=
        \log\left( \frac{\|\bigwedge^3 \Phi_{16}(\ell(x,y))\| }{\max\{\|\bigwedge^3 \Phi_{15}(\ell(x,y))\|,\|\bigwedge^2\Phi_{15}(\ell(x,y))\||x_{66}| \}} \right)\\
        &\sim
        \log \left( \frac{1+x^4+|x|^5+2(|x|+1)y^2}{\max\{1,x^4,|y|,x^2|y|\}}\right).
    \end{align*}

    Put 
    \[
    \psi : \R^2\to \R, \quad (x,y)\mapsto \log \left( \frac{1+x^4+|x|^5+2(|x|+1)y^2}{\max\{1,x^4,|y|,x^2|y|\}}\right).
    \]
    Then, by Proposition \ref{proposition:preserveclose}, $\varphi_{(1,3,1)}\circ \ell$ is coarsely proper if and only if the map $\psi$ is coarsely proper.
    Hence it suffices to show $\liminf_{\|(x,y)\|\to \infty } \psi=\infty$.
    Define the following set:
    \[
    \Omega:=\left\{(x,y)\in \R^2\mid \max\{1,x^4,|y|,x^2|y|\}=\max\{1,x^4,|y|\}\right\}.
    \]
    Then the following immediately holds:
    \[
    \liminf_{\|(x,y)\|\to \infty, (x,y)\in \Omega } \psi =\infty.
    \]
    On the other hand,
    the equality $\R^2\setminus \Omega=\{(x,y)\in \R^2\mid |y|>x^2>1\}$ holds, and we have:
    \begin{align*}
    \liminf_{\substack{\|(x,y)\|\to\infty\\(x,y)\in\mathbb R^2\setminus\Omega}} \log \left( \frac{1+x^4+|x|^5+2(|x|+1)y^2}{x^2|y|}\right)
    &\geq \liminf_{\substack{\|(x,y)\|\to\infty\\(x,y)\in\mathbb R^2\setminus\Omega}} \log \left( \frac{|x|^5+2(|x|+1)y^2}{x^2|y|}\right)\\
    &\geq \liminf_{\substack{\|(x,y)\|\to\infty\\(x,y)\in\mathbb R^2\setminus\Omega}} \log \left( \frac{|x|^3}{|y|}+\frac{2(|x|+1)|y|}{x^2}\right)\\
    &\geq \liminf_{\substack{\|(x,y)\|\to\infty\\(x,y)\in\mathbb R^2\setminus\Omega}} \log \left( 2\sqrt{|x|(|x|+1)}+\frac{(|x|+1)|y|}{x^2}\right)\\
    &\geq \liminf_{\substack{\|(x,y)\|\to\infty\\(x,y)\in\mathbb R^2\setminus\Omega}} \log \left( |x|+|x|+\frac{(|x|+1)|y|}{x^2}\right)\\
    &\geq \liminf_{\substack{\|(x,y)\|\to\infty\\(x,y)\in\mathbb R^2\setminus\Omega}} \log \left( 3(|x||y|+|y|)^\frac{1}{3}\right)=\infty.
    \end{align*}
    Thus, $\liminf_{\|(x,y)\|\to \infty} \psi=\infty$, and it implies that $\varphi_{(1,3,1)}\circ\ell:\R^2\to \R$ is coarsely proper.
    Therefore, the natural $L$-action on $\rmB(6)/H\cong \R^5$ is proper.
\end{example}

\begin{remark}
    In the setting of the above example, let us consider another closed subgroup of $\rmB(6)$ as below:
    \[
    L':=
        \left\{
            \begin{pmatrix}
                1 & 0 & 0 & 0 & -y & x \\
                0 & 1 & x & \frac{x^2}{2} & \frac{x^3}{6} & y \\
                0 & 0 & 1 & x & \frac{x^2}{2} & 0 \\
                0 & 0 & 0 & 1 & x & 0 \\
                0 & 0 & 0 & 0 & 1 & 0 \\
                0 & 0 & 0 & 0 & 0 & 1 \\
            \end{pmatrix}
            \in \rmB(6)\middle| x,y\in\R 
        \right\}.
    \]
    The only difference between $L$ and $L'$ is the sign of the $(1,5)$-entry.
    However, the $L'$-action on $\rmB(6)/H$ is not proper.
    Yoshino \cite{Yoshino2005counterLips} showed this and thereby provided a counter example to Lipsman's conjecture.
\end{remark}

\section{Remarks on closed subgroups of $\rmB(n)$}\label{section:subgroups}

The following theorem is obtained from Propositions \ref{proposition:composition}, \ref{proposition:grouphom} and Theorem \ref{theorem:maintheoremimage}:

\begin{theorem}
    Fix $n\in \Z_{\geq 1}$ and a set $\Lambda$ with $\Lambda_0(n)\subset \Lambda\subset \Lambda_1(n)$ (see Section \ref{subsection:coarsemapproper}). 
    Let $G$ be a closed subgroup of $\rmB(n)$.
    Then the composition $\mu_\Lambda\circ \iota_G : G\to \R^{ \Lambda}$ is coarse.
    Here, $G$ is equipped with the LR-coarse structure, $\R^{\Lambda}$ is equipped with the bounded coarse structure (see Example \ref{example:coarse}) and $\iota_G$ is the inclusion map from $G$ to $\rmB(n)$.
    In particular, for closed subgroups $L,H$ of $G$, 
    if the pair $(\mu_\Lambda\circ \iota_G(L), \mu_\Lambda\circ \iota_G(H))$ is asymptotically disjoint in $(\R^{ \Lambda},\mathcal{E}_d)$, 
    the natural $L$-action on $G/H$ is proper (see Corollary \ref{corollary:coarsemapproper}). 
\end{theorem}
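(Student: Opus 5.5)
The plan is to obtain the coarseness of $\mu_\Lambda\circ\iota_G$ as a composition of two coarse maps, and then to deduce the properness statement from Corollary \ref{corollary:coarsemapproper} applied to the ambient group $G$.

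\textbf{Step 1: $\iota_G$ is coarse.} Since $G$ is a closed subgroup of $\rmB(n)$, the inclusion $\iota_G\colon G\to\rmB(n)$ is a topological closed embedding and a group homomorphism. Here both groups carry their LR-coarse structures. By assertion \eqref{proposition:grouphom:item:coarse} of Proposition \ref{proposition:grouphom}, $\iota_G$ is therefore coarse.

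Behind that assertion are two facts. Controlledness holds because $\Ad_{\iota_G}E^{\mathrm{LR}}_C\subset E^{\mathrm{LR}}_{C}$ for each compact $C\subset G$. Coarse properness holds because of Proposition \ref{proposition:closedeqqborno}: closedness of $G$ gives $\iota_G^{*}\mathcal{B}_{\mathrm{cpt}}(\rmB(n))=\mathcal{B}_{\mathrm{cpt}}(G)$. Combined with $\mathcal{B}_{\mathcal{E}^{\mathrm{LR}}}=\mathcal{B}_{\mathrm{cpt}}$ from Example \ref{example:coarse}, this says that preimages of bounded sets are bounded.

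\textbf{Step 2: $\mu_\Lambda$ is coarse.} Because $\Lambda_0(n)\subset\Lambda\subset\Lambda_1(n)$, Theorem \ref{theorem:maintheoremimage} shows that $\mu_\Lambda\colon\rmB(n)\to\R^{\Lambda}$ is coarse.

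\textbf{Step 3: the composition.} Proposition \ref{proposition:composition} says that coarse maps are closed under composition, so $\mu_\Lambda\circ\iota_G$ is coarse.

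\textbf{Step 4: the properness claim.} Apply Corollary \ref{corollary:coarsemapproper} with ambient group $G$, coarse space $(S,\mathcal{E})=(\R^\Lambda,\mathcal{E}^d)$, and coarse map $\mu=\mu_\Lambda\circ\iota_G$. The hypothesis is that $(\mu(L),\mu(H))$ is asymptotically disjoint in $\R^\Lambda$, and the corollary then gives that the natural $L$-action on $G/H$ is proper. For completeness, the mechanism is this: Proposition \ref{proposition:ad} \eqref{proposition:ad:item:coarse} shows $L\pitchfork H$ in $(G,\mathcal{E}^{\mathrm{LR}})$, and Example \ref{example:properad} translates this into properness.

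The only point needing care is Step 1, namely that coarse properness of $\iota_G$ really uses closedness of $G$. Without it, bounded (relatively compact) subsets of $\rmB(n)$ could meet $G$ in sets that are not relatively compact in $G$. Everything else is a direct citation of the results listed above.
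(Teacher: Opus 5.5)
Your proposal is correct and follows the paper's own route. The paper obtains the theorem by combining three results: Proposition \ref{proposition:grouphom} (the closed inclusion $\iota_G$ is coarse), Theorem \ref{theorem:maintheoremimage} ($\mu_\Lambda$ is coarse) and Proposition \ref{proposition:composition}. It then deduces the properness statement from Corollary \ref{corollary:coarsemapproper}, exactly as in your Steps 1--4.
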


\begin{remark}
    Although the above theorem is a sufficient condition for the properness of the natural $L$-action on $G/H$, it cannot be said to be a necessary condition in general.
    In fact, Example \ref{example:nilpotentproper} will be an example that cannot be determined by the above theorem:
\end{remark}

Finally, let us focus on the case $G$ is the group of all upper triangle matrix such that each diagonal entry is $1$.
For $n\in \Z_{\geq 2}$, put 
$\rmN(n):=\{X\in \rmB(n)\mid x_{ii}=1 \text{ for all }i\in [n]\}$, and define the map $\alpha_0$ as below:
\[
    \alpha_0:\rmN(n)\to \R,\quad X\mapsto \log(\|X\|),
\]
and, for each $i\in [n-1]$, put
\[
    \alpha_i:\rmN(n)\to \R, \quad X\mapsto x_{i,i+1}.
\]
Note that $\alpha_0=\mu_{(1,n,1,1)}\circ \iota_{\rmN(n)}$.
Here, $\iota_{\rmN(n)}$ is the inclusion map from $\rmN(n)$ to $\rmB(n)$.
the following theorem holds:

\begin{theorem}\label{theorem:nilpotent}
    Fix $n\in \Z_{\geq 2}$. Then the following three conditions hold:
    \begin{enumerate}
        \item\label{theorem:nilpotent:item:0} The map $\alpha_0=\mu_{(1,n,1,1)}\circ \iota_{\rmN(n)}$ is coarse.
        \item\label{theorem:nilpotent:item:i} The map $\alpha_i$ is controlled for all $i\in [n-1]$.
        \item\label{theorem:nilpotent:item:alpha} Fix a subset $I\subset [n-1]\cup\{0\}$ with $0\in I$ and $\Lambda\subset \Lambda_1(n)$.
        Then the following map is coarse:
        \[
        (\alpha_I, \mu_{\Lambda}):\rmN(n)\to \R^{I\sqcup \Lambda}, \quad X\mapsto (\alpha_i(X))_{i\in I}\oplus (\mu_\Lambda(X)).
        \]
        Here, in case $\Lambda=\emptyset$, put
        $(\alpha_I, \mu_{\Lambda}):=\alpha_I$.
    \end{enumerate}
\end{theorem}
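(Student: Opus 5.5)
For \eqref{theorem:nilpotent:item:0}, I will treat controlledness and coarse properness separately. Controlledness follows from Theorem~\ref{theorem:mu0}\eqref{theorem:mu0:item:controlled}: the map $\mu_{(1,n,1,1)}$ is controlled, and the inclusion $\iota_{\rmN(n)}$ is coarse by Proposition~\ref{proposition:grouphom}. Proposition~\ref{proposition:composition} then shows that $\alpha_0$ is controlled.

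For coarse properness, take a bounded set $C\subset[-R,R]$. Then $\alpha_0^{-1}(C)$ is contained in the set of $X\in\rmN(n)$ with $|x_{st}|\le e^R$ for all $s<t$. Since the diagonal entries are fixed to be $1$, this set is closed and bounded in the closed subgroup $\rmN(n)\subset\GL(n,\R)$, hence compact. So it is bounded for $\mathcal{E}^{\mathrm{LR}}$, because $\mathcal{B}_{\mathcal{E}^{\mathrm{LR}}}=\mathcal{B}_{\mathrm{cpt}}(\rmN(n))$ by Example~\ref{example:coarse}. Note also that $\|X\|\ge1$ on $\rmN(n)$, so $\alpha_0\geq 0$.

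For \eqref{theorem:nilpotent:item:i}, I will show that $\alpha_i$ is a continuous group homomorphism $\rmN(n)\to(\R,+)$ and then apply Proposition~\ref{proposition:grouphom}\eqref{proposition:grouphom:item:controlled}. This also uses the remark after Corollary~\ref{corollary:log} that the LR-coarse structure on $\R$ is the bounded coarse structure. The homomorphism property is a direct entry computation: for $X,Y\in\rmN(n)$,
\[
(XY)_{i,i+1}=\sum_{i\le k\le i+1}x_{ik}y_{k,i+1}=x_{i,i+1}+y_{i,i+1},
\]
since $x_{ii}=y_{i+1,i+1}=1$. Equivalently, $\alpha_i$ is the $(1,2)$ entry of $\Phi_{i,i+1}$, and $\Phi_{i,i+1}(\rmN(n))\cong\R$ via an isomorphism of topological groups.

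For \eqref{theorem:nilpotent:item:alpha}, I will apply Proposition~\ref{proposition:products}\eqref{proposition:products:coarse} with the family $\{\alpha_i\}_{i\in I}\cup\{\mu_\lambda\circ\iota_{\rmN(n)}\}_{\lambda\in\Lambda}$. Every member is controlled: the $\alpha_i$ for $i\ge1$ by \eqref{theorem:nilpotent:item:i}, and the $\mu_\lambda\circ\iota_{\rmN(n)}$ by Theorem~\ref{theorem:mu0}\eqref{theorem:mu0:item:controlled} together with Proposition~\ref{proposition:composition}. The component $\alpha_0$ is coarse by \eqref{theorem:nilpotent:item:0}, and $0\in I$ by assumption, so it serves as the distinguished coarse coordinate $\lambda_0$. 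Finally, the product coarse structure on $\R^{I\sqcup\Lambda}$ agrees with the bounded coarse structure, by the example following the definition of product coarse structures.

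The only step with genuine content is the coarse properness in \eqref{theorem:nilpotent:item:0}. It requires only the observation that fixing the diagonal entries to $1$ removes the need to control them through $\Lambda_0(n)$, which is exactly why $\alpha_0$ alone already suffices.
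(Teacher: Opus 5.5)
Your proposal is correct and follows the paper's proof essentially step for step. It uses the same compact box $\{X\in\rmN(n) : |x_{st}|\le e^R\}$ for coarse properness of $\alpha_0$, Proposition~\ref{proposition:grouphom} for the homomorphisms $\alpha_i$, and Proposition~\ref{proposition:products} with $\alpha_0$ as the distinguished coarse coordinate. The only cosmetic difference is in controlledness of $\alpha_0$: the paper applies Lemma~\ref{lemma:logsubmultiplicative} directly on $\rmN(n)$ (via Example~\ref{example:submultiplicative}), while you compose the controlled map $\mu_{(1,n,1,1)}$ with the coarse inclusion $\iota_{\rmN(n)}$, which amounts to the same thing.
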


\begin{proof}
    Let us check assertion \eqref{theorem:nilpotent:item:0}.
    the controlledness of $\alpha_0$ directly follows from Lemma \ref{lemma:logsubmultiplicative} (see also Example \ref{example:submultiplicative}).
    Further, for any $R\geq 1$, the following holds:
    \[
    \alpha_0^{-1}(E_R[0])\subset \left\{
    \begin{pmatrix}
        1& x_{12} &\cdots & x_{1n}\\
        0& 1 &\ddots & \vdots \\
        \vdots& \ddots &\ddots & x_{n-1n} \\
        0& \cdots &0 & 1 \\
    \end{pmatrix}
    \middle|
    |x_{ij}|\leq e^R \text{ for all } 1\leq i<j\leq n
    \right\}.
    \]
    Hence, the map $\alpha_0$ is coarsely proper and thus, $\alpha_0$ is coarse.
    By Proposition \ref{proposition:grouphom}, we can obtain assertion \eqref{theorem:nilpotent:item:i}.
    Finally, assertion \eqref{theorem:nilpotent:item:alpha} holds by Proposition \ref{proposition:products}.
\end{proof}

Let us apply the above theorem to the following test case:

\begin{example}\label{example:nilpotentproper}
    Let us consider the following closed subgroups of $\rmN(3)$:
    \[
    L:=\left\{ \ell(t):=
    \begin{pmatrix}
        1 & t & \frac{t^2}{2} \\
        0 & 1 & t \\
        0 & 0 & 1 \\
    \end{pmatrix}
    \middle|
    t\in \R
  \right\},\quad
   H:=\left\{ h(s):=
    \begin{pmatrix}
        1 & \frac{s}{4} & \frac{s^2}{4} \\
        0 & 1 & 2s \\
        0 & 0 & 1 \\
    \end{pmatrix}
    \middle|
    s\in \R
  \right\}
    \]
    Then the $L$-action on $\rmN(3)/H$ is proper.
    In fact,
    put $\widetilde{\alpha}:=(\alpha_0,\alpha_1,\alpha_2)$
    and  consider the images of $\widetilde{\alpha}$ as below:
    \begin{align*} 
    \widetilde{\alpha}(L)=\left\{\left(\alpha_0(\ell(t)),t,t\right)\middle| t\in \R\right\},\,
    \widetilde{\alpha}(H)=\left\{\left( \alpha_0(h(s)),\frac{s}{4},2s\right)\middle| s\in \R\right\}.
    \end{align*}
    it suffices to show that the pair $(\widetilde{\alpha}(L),\widetilde{\alpha}(H))$
    is asymptotically disjoint.
    Fix $R\geq 1$.
    For any element $\widetilde{\alpha}\circ h(s)\in E_R[\widetilde{\alpha}(L)]\cap \widetilde{\alpha}(H)$, we have the following:
    \[
   |s|\leq \left|2s-\frac{s}{4}\right|\leq |t-2s|+\left|t-\frac{s}{4}\right|\leq R+R=2R.
    \]
    Hence, the inclusion $E_R[\widetilde{\alpha}(L)]\cap \widetilde{\alpha}(H)\subset \widetilde{\alpha}\left(\{h(s)\in H\mid s\in [-2R,2R] \}\right)$ holds, and the intersection $E_R[\widetilde{\alpha}(L)]\cap \widetilde{\alpha}(H)$ is bounded.
    Thus, the pair $(\widetilde{\alpha}(L),\widetilde{\alpha}(H))$ is asymptotically disjoint.
    Therefore, the $L$-action on $\rmN(3)/H$ is proper.

    Next, let us consider $\iota_{\rmN(3)}(L)$ and $\iota_{\rmN(3)}(H)$ in $\rmB(3)$.
    For the sake of brevity, we write $\iota_{\rmN(3)}(L)$ as $L$ and $\iota_{\rmN(3)}(H)$ as $H$.
    Then $L$-action on $\rmB(3)/H$ is not proper.
    In fact, 
    \[
    L=\begin{pmatrix}
        2 & 0 & 0\\
        0 & \frac{1}{2} & 0\\
        0 & 0 & 1\\
    \end{pmatrix}
    \cdot H \cdot 
    \begin{pmatrix}
        2 & 0 & 0\\
        0 & \frac{1}{2} & 0\\
        0 & 0 & 1\\
    \end{pmatrix}^{-1}.
    \]
\end{example}

\section*{ACKNOWLEDGMENTS}
The author would like to give heartfelt thanks to Takayuki Okuda for his encouragement
to write this paper. We are also indebted to Temma Aoyama, Naotsugu Chinen, Tomohiro Fukaya, Tatsuro Hikawa, Kazuki Kannaka, Hikozo Kobayashi, Toshiyuki Kobayashi, Akira Kubo, Takumi Matsuka, Muneto Miyaji, Shunsuke Miyauchi, Yosuke Morita, Akifumi Nakada, Hideto Nakashima, Kento Ogawa, Shinichi Oguni, Atsumu Sasaki, Hiroshi Tamaru, Koichi Tojo, Kotaro Mine, Takamitsu Yamauchi, Ibuki Yonezawa for many helpful comments. 
I am especially grateful to Professors Hideyuki Ishi and Ali Baklouti for organizing the 8th Tunisian-Japanese Conference and for providing such a wonderful opportunity for stimulating mathematical exchange. I would also like to thank all the organizers and participants for inspiring atmosphere that made the conference a truly memorable occasion.
This work is supported by JST SPRING, Grant Number JPMJSP2132.

\providecommand{\bysame}{\leavevmode\hbox to3em{\hrulefill}\thinspace}
\providecommand{\MR}{\relax\ifhmode\unskip\space\fi MR }
\providecommand{\MRhref}[2]{%
  \href{http://www.ams.org/mathscinet-getitem?mr=#1}{#2}
}
\providecommand{\href}[2]{#2}

\end{document}